\newtheorem{theorem}{Theorem}
\newtheorem{corollary}[theorem]{Corollary}
\newtheorem{proposition}[theorem]{Proposition}
\newcommand{\h}{\mathbb{H}}
\newcommand{\p}{\mathbf{P}}
\newcommand{\Q}{\mathbb{Q}}
\newcommand{\R}{\mathbb{R}}
\newcommand{\Z}{\mathbb{Z}}
\newcommand{\C}{\mathbb{C}}
\newcommand{\OO}{\mathcal{O}}
\newcommand{\E}{\mathcal{E}}
\newcommand{\F}{\mathcal{F}}
\newcommand{\M}{\mathbb{M}}
\newcommand{\el}{\mathcal{L}}
\DeclareMathOperator{\ord}{\mathrm{ord}}
\DeclareMathOperator{\SL}{SL}
\DeclareMathOperator{\PSL}{PSL}
\DeclareMathOperator{\vol}{Vol}
\DeclareMathOperator{\Id}{Id}
\begin{document}

\title{The Hauptmodul at elliptic points of certain arithmetic groups}
\author[J.~Jorgenson]{Jay Jorgenson}
\address{Department of Mathematics, The City College of New York,
Convent Avenue at 138th Street, New York, NY 10031 USA,
e-mail: jjorgenson@mindspring.com}
\author[L.~Smajlovi\'c]{Lejla Smajlovi\'c}
\address{Department of Mathematics, University of Sarajevo,
Zmaja od Bosne 35, 71\,000 Sarajevo, Bosnia and Herzegovina,
e-mail: lejlas@pmf.unsa.ba}
\author[H.~Then]{Holger Then}
\address{Alemannenweg 1, 89537 Giengen, Germany,
e-mail: holger.then@bristol.ac.uk}

\begin{abstract}
Let $N$ be a square-free integer such that the arithmetic group
$\Gamma_0(N)^+$ has genus zero; there are $44$ such groups. Let $j_N$
denote the associated Hauptmodul normalized to have residue equal to one and
constant term equal to zero in its $q$-expansion. In this article we prove
that the Hauptmodul at any elliptic point of the surface associated to
$\Gamma_0(N)^+$ is an algebraic integer. Moreover, for each such $N$
and elliptic point $e$, we show how to explicitly evaluate $j_{N}(e)$ and provide the list of
generating polynomials (with small coefficients) of the class fields or their subfields
corresponding to the orders over the imaginary quadratic extension of rationals stemming
from the elliptic points under consideration.
\end{abstract}

\thanks{J.~J.\ acknowledges grant support from NSF and PSC-CUNY grants}

\maketitle

\section{Introduction}

\subsection{Some number theoretic considerations}

Let $\Gamma$ be a discrete group acting on the hyperbolic upper half plane $\h$ such that
the quotient $\Gamma \backslash \h$ has genus zero, and, of course,
necessarily admits some cusps and elliptic points. The function field associated to
$\Gamma \backslash \h$ has transcendence degree one and is generated over the field of
constants, which for this paper are the complex numbers $\C$, by a single indeterminant
which we denote by $j_{\Gamma}$. In general, one can normalize $j_{\Gamma}$ by choosing
a distinguished point $P$ on $\Gamma \backslash \h$ and requiring $j_{\Gamma}$ to have a first
order pole at $P$ with residue equal to one as well as zero constant term in its Laurent expansion
about $P$ having chosen a local coordinate at $P$.

In the specific case when $\Gamma = \PSL(2,\Z)$, we can take $P = i\infty$ since as
a Riemann surface the quotient space $\PSL(2,\Z) \backslash \h$ has a cusp.
Let $z$ denote the global coordinate on $\h$, and set $q = e^{2\pi i z}$ which is
a local coordinate about $i\infty$. Then the (classical) $j$-invariant
$j(z)=j_{\PSL(2,\Z)}(z)$ admits the $q$-expansion on $\p^1$ given by
\begin{align}\label{j_expansion}
j(z) =\frac{1}{q} + \sum_{k=1}^{\infty}a_k q^k
=\frac{1}{q} + 744 + 196884 q + 21493760 q^2 + O(q^3)
\quad \text{as $q \to 0$.}
\end{align}
From the point of view of automorphic forms, $j$ can be realized as a rational function of
holomorphic Eisenstein series of weight four and six.

As it turns out, the function $j(z)$ satisfies many amazing properties. T.~Schneider proved
in \cite{Sch37} that if $\tau \in \h$ is an imaginary quadratic number then $j(\tau)$ is an
algebraic integer. In addition, if $\tau$ is an algebraic number but not imaginary
quadratic then $j(\tau)$ is transcendental; see also \cite{Siegel49}. In modern language,
the points $\tau \in \h$ which are imaginary quadratic numbers are called
\emph{complex multiplication points,} or CM points. In \cite{Za02} it is shown how
to compute $j(\tau)$ at any CM point, thus giving some fantastic formulae such as
$$
j(i) = 1728, \quad j((1+\sqrt{-7})/2) = -3375, \quad \text{and} \quad
j((1+\sqrt{-163})/2) = 640320^{3}.
$$
The third example combines with \eqref{j_expansion} to yield the curiosity that the
transcendental number $e^{\pi\sqrt{163}}$ is very close to an integer, a result which
is attributed to Hermite.

More generally, the \emph{singular moduli} of the $j$-invariant, which by definition
are the values of the function $j$ at imaginary quadratic arguments, play a very important
role in the class field theory of imaginary quadratic fields; see \cite{SCM66}. Namely,
let $K$ be an imaginary quadratic field over $\Q$, of discriminant $d_K$ and let $\OO$ be
a certain order in $K$. Then for
an imaginary quadratic argument $\tau \in\h \cap \OO$, the singular modulus $j(\tau)$ is an
algebraic integer. Moreover, the extension $K[j(\tau)]$ is the ring class field of $\OO$,
which is the Hilbert class field if $\OO$ is the maximal order of $K$, and can be realized
constructively as the splitting field over $\Q$ of the class polynomial, by which we mean
the minimal polynomial of $j(\tau)$.

The seminal work of Gross-Zagier \cite{GZ85} studies the factorization of the difference
of two singular moduli $j(\tau_{1}) - j(\tau_{2})$,
from which we have a considerable amount of current research that reaches in various
directions of algebraic and arithmetic number theory including special values of
$L$-functions and the Birch-Swinnerton-Dyer conjecture, one of the six unsolved
Millennium Prize Problems.

Thus, properties of the $j$-invariant for $\PSL(2,\Z)$ play a role in algebraic number theory.

\subsection{Connections to other fields}\label{sec 1.2}

Let $f$ be a holomorphic function of one complex variable. The
\emph{Schwarzian derivative} $S(f)$ of $f$ is a classically defined function given by
$$
S(f)(z)= \left( \frac{f''(z)}{f'(z)}\right)' - \frac{1}{2}\left( \frac{f''(z)}{f'(z)}\right)^2,
$$
where, as is the convention, the prime $'$ denotes differentiation with respect
to the holomorphic parameter $z$. If $f$ and $g$ are holomorphic functions,
then the Schwarzian of the composition $f \circ g$ satisfies the relation
$$
S(f\circ g) = (S(f)\circ g)(g')^{2} + S(g).
$$
In addition, one can show that $S(g) = 0$ if and only if $g$ is a
fractional linear transformation. Therefore, if the function $f$ is a
holomorphic automorphic form with respect to some discrete group
$\Gamma\subseteq \PSL(2,\R)$, then the Schwarzian $S(f)$ is a meromorphic
automorphic form of weight four with respect to $\Gamma$.

In \cite{Masser03} it is proven that the classical $j$-invariant for $\PSL(2,\Z)$
satisfies the differential equation
\begin{align}\label{dif eq for j}
S(j)(z)+ R(j(z))(j'(z))^2=0,
\end{align}
where
\begin{align*}
R(y)= \frac{y^2-1968y+2654208}{2y^2(y-1728)^2}.
\end{align*}

\noindent Recently Freitag and Scanlon \cite{FS14} used \eqref{dif eq for j} to
define a non-$\aleph_0$-categorical strongly minimal set with trivial forking geometry,
thus answering an open problem about the existence of such sets. The authors in
\cite{FS14} attribute the question to Lascar, who himself credits the question to Poizat.
We refer the reader to \cite{FS14} for precise statements as well as numerous
applications of their result.

Thus, properties of the $j$-invariant for $\PSL(2,\Z)$ provide a means by which one can
address problems in logic and differential algebraic geometry.

\subsection{Some other genus zero groups}

For any positive integer $N$, let
\begin{align*}
\Gamma_0(N)^+=\left\{ e^{-1/2} \begin{pmatrix} a & b \\ c & d \end{pmatrix}
\in \SL(2,\R): \quad ad-bc=e, \quad a,b,c,d,e\in\Z,
\quad e\mid N,\ e\mid a,\ e\mid d,\ N\mid c \right\}
\end{align*}
and let $\overline{\Gamma_0(N)^+} = \Gamma_0(N)^+/\{\pm\Id\}$, where $\Id$
denotes the identity matrix. It has been shown that there are $43$
square-free integers $N>1$ such that the quotient space
$X_N := \overline{\Gamma_0(N)^+}\backslash \h$
has genus zero (see \cite{Cum04}); note that $\PSL(2,\Z) = \overline{\Gamma_0(1)^+}$.
We will also say that $\Gamma_0(N)^+$ is a genus zero group if $N$ corresponds to
one of the $44$ aforementioned numbers. For each such genus zero group,
the surface $X_N$ has one cusp which we can take to be at $i\infty$ and width one.
Basic properties of
$\Gamma_0(N)^+$, for square-free $N$ are derived in \cite{JST12} and references therein.

For every genus zero group $\Gamma_0(N)^+$ there exists a unique holomorphic
modular form on $\h$ with a pole at $i\infty$ of order one such that its
$q$-expansion is normalized so it begins with $1/q$ and the constant term is
equal to zero. We denote the form by $j_{\Gamma_0(N)^+}:=j_N$ and, following classical and
well-accepted terminology, refer to $j_N$ as the Hauptmodul of
$\Gamma_0(N)^+$. Therefore, each Hauptmodul $j_N$ possesses
a Fourier expansion at the cusp $i\infty$ with integer coefficients $a_N(k)$,
normalized so that $a_N(-1)=1$ and $a_N(0)=0$. In other words, the
$q$-expansion of $j_N(z)$ is given by
\begin{align}\label{q exp jN}
j_N(z)= \frac{1}{q} + \sum_{k=1}^{\infty} a_N(k)q^k
= \sum_{k=-1}^{\infty} a_N(k)q^k.
\end{align}
Since $X_N$ has genus zero, the
function field has transcendence degree one over $\C$ and is generated by one
indeterminant meaning that every $\Gamma_0(N)^+$-invariant meromorphic function
can be written as a rational function in $j_N$.

\subsection{The beginning of ``monstrous moonshine"}

Let $\M$ denote ``the monster'' group, which is the largest sporadic finite simple group.
In the mid-1900's, there were two very important and independent observations;
A.~Ogg showed that the set of primes which appear in the factorization of the order of
$\M$ is the same set of primes such that $\Gamma_0(p)^+$ has genus zero, and J.~McKay
pointed out that the linear-term coefficient in \eqref{j_expansion} is the
sum of the two smallest irreducible character degrees of $\M$. Subsequent
work by J.~Thompson resulted in specific conjectures asserting all coefficients
in the expansion \eqref{j_expansion} are related to the dimensions of the
components of a graded module admitting action by $\M$. More generally,
J.~Conway and S.~Norton established the ``monstrous moonshine'' conjectures
in \cite{CN79} which more precisely formulated relations between $\M$ and the
$j$-invariants for the genus zero groups $\Gamma_{0}(N)^{+}$, culminating in the
celebrated work of Borcherds in \cite{Bo92}.

Thus, properties of the $j$-invariants $j_{N}$ for the genus zero groups
$\Gamma_{0}(N)^{+}$ appear in group theory and all the numerous
fields touched by ``monstrous moonshine''.

\subsection{Singular moduli for $\Gamma_{0}(N)^{+}$}

For $N>1$ such that $\Gamma_0(N)^+$ has genus zero, the singular moduli were studied
by I.~Chen and N.~Yui \cite{CY93}. Analogous to Schneider's result, the authors in
\cite{CY93} proved that if $\tau$ is a CM point satisfying $az^2 + bz+c=0$ with
$(a,N)=1$ then the singular moduli $j_N(\tau)$ is an algebraic integer.
Furthermore, if we put $K=\Q[\tau]$, $b^2-4ac=m^2d_K<0$, and let $\OO$ denote the
order in $K$ of discriminant $m^2d_K$, then Theorem~3.7.5.(2) from \cite{CY93} states
that for prime levels $N$, and assuming that $(a,N)=1$, the singular moduli $j_N(\tau)$
generates over $K$ the ring class field of an imaginary quadratic order $\OO'$ of
discriminant $(mN)^2 d_K$. The assumption requiring that $(a,N)=1$ was crucial in the
proof. If $(a,N)>1$, then $K[j_N(\tau)]$ is a proper subfield of the ring class
field of $\OO'$.

The results from \cite{CY93} were expanded upon in \cite{CMcKS04} and
\cite{CK05}. Let us denote by $\Gamma_0(N)^{\ast}$ the subgroup of $\mathrm{PSL}(2,\R)$ generated by
$\overline{\Gamma_0(N)}=\Gamma_0(N)/\{\pm \mathrm{Id}\}$ and the Fricke involution
$\gamma_N=\begin{pmatrix} 0 & -1/\sqrt{N} \\ \sqrt{N} & 0 \end{pmatrix}$.
Note that for prime levels $N$, one has $\Gamma_0(N)^{\ast}=\overline{\Gamma_0(N)^+}$, otherwise
$\Gamma_0(N)^{\ast}$ is a proper subgroup of $\overline{\Gamma_0(N)^+}$. When $N$ is a product
of $r$ primes, $\overline{\Gamma_0(N)}$ is a subgroup of index $2^r$ in $\overline{\Gamma_0(N)^+}$
(see e.g. \cite{AtLeh70}, Lemma 9) while $\overline{\Gamma_0(N)}$ is a subgroup of index $2$ in
$\Gamma_0(N)^{\ast}$, hence $\Gamma_0(N)^{\ast}$ is a subgroup of index $2^{r-1}$ in
$\overline{\Gamma_0(N)^+}$.
In \cite{CMcKS04} it is proven that for any fixed elliptic point $\tau_{\gamma} \in\h$
with corresponding order two elliptic element
$\gamma\in\Gamma_0(N)^{\ast} \setminus \overline{\Gamma_0(N)}$,
the field $\Q[\tau_{\gamma}, j_N(\tau_{\gamma})]$ is the ring class field of the order
$\OO_{\gamma}$ in $K$, where $\OO_{\gamma}$ denotes the order in $K=\Q[\tau_{\gamma}]$
generated by the complex lattice $[1,\tau_{\gamma}]$. A similar statement is proven
in \cite{CK05}, Theorem~4, with $j_N$ replaced by an appropriately normalized
Hauptmodul for the genus zero group $\Gamma_0(N)^{\ast}$.

The articles \cite{CY93}, \cite{CMcKS04} and \cite{CK05} do not address the
question whether the value of $j_N(\tau_{\gamma})$ for any \emph{elliptic fixed point}
$\tau_{\gamma}$ of $\Gamma_0(N)^+$ is an algebraic integer. Partial numerical evidence
supporting this question is given in \cite{CMcKS04}. If the answer to this question is
affirmative, then, a natural follow-up problem would be to determine their minimal
polynomials whose splitting fields over the appropriate extension of the rationals
would be the ring class fields of the corresponding orders.

\subsection{Our results}

The main purpose of this paper is to answer the two questions posed above. We prove
for all genus zero groups $\Gamma_{0}(N)^{+}$ and for all corresponding elliptic
points $e$ that the singular moduli $j_N(e)$ is an algebraic integer.
Moreover, we obtain an exact evaluation in terms of radicals of each such singular
moduli after which we compute the minimal polynomials of the corresponding ring class
fields and their subfields.

Our analysis begins by studying \eqref{dif eq for j} for any genus zero group
$\Gamma_{0}(N)^{+}$. It is not difficult to show that for any Hauptmoduli $f$
on any genus zero group $\Gamma$ commensurable with $\PSL(2,\Z)$ there exists
a rational function $R_{\Gamma}(y)$ such that
$$
S(f)(z)+ R_{\Gamma}(f(z))(f'(z))^2=0;
$$
see, for example, \cite{HMcK00}, Theorem~1.1. In this paper, we specialize to the
genus zero ``moonshine groups" $\Gamma_0(N)^+$ with square-free $N$. To be precise,
we explicitly compute the rational function $R_N(y)$ such that
\begin{align*}
S(j_N)(z)+ R_N(j_N(z))(j_N'(z))^2=0.
\end{align*}
The analysis and algorithms presented in this article yield the following results.

\medskip

\textbf{Main Theorem}
\begin{it}
With the above notation, we write $R_N = P_N/Q_N$ for polynomials $P_N$ and $Q_N$.
\begin{enumerate}
\item The polynomial $P_N$ is a monic polynomial with
$\deg(P_N) = \deg(Q_N) - 2$. Furthermore,
$$
Q_N(j_N(z)) = 2 \prod_{e\in\E_N}(j_N(z)-j_N(e))^{2}
$$
where $\E_N$ is the set of inequivalent elliptic points on $\h$ with respect
to the action by $\Gamma_0(N)^+$.
\item The coeffients of $P_N$ and $Q_N$ are integers.
\item If we write $Q_{N} = 2(h_{N})^{2}$, then $h_{N}$ is a monic polynomial with integer
coefficients, thus the values of $j_N(e)$ for $e\in \E_N$ are algebraic integers.
\end{enumerate}
\end{it}

\medskip

The factorization of polynomials $h_N(y)$ into irreducible polynomials over $\Z$
is given in Table~\ref{Table h_N}. Using the $q$-expansions of $j_{N}$, which
were obtained in \cite{JST2}, we then derived a list of approximate values of $j_N(e)$
for $e\in \E_N$, as well as the roots of $h_{N}$ in terms of radicals; see
Appendices~\ref{sec A3} and \ref{sec A2}, while the list of polynomials $P_N$ and $Q_N$ is
given in Appendix~\ref{sec A1}. Finally, by pairing the values of the
roots with the approximate values of $j_N(e)$, we obtained the minimal polynomials
associated to each value of $j_N(e)$. After the above mentioned computations, we
combine with results from \cite{CMcKS04} in order to explicitly construct the
class fields of certain orders and their subfields. A summary of these results is stated in
Corollary~\ref{cor:explicit class field th.} and Tables~\ref{Table with disc} and
\ref{Table with disc-subf}.
For some class fields we get more than one generating polynomial. For example
for the class field of the order $\Z[\sqrt{-17}]$, we have three generating
polynomials over $\Q[\sqrt{-17}]$:
$$
h_{17, -4\cdot 17}= y^4+2y^3-39y^2-176y-212, \,\, h_{51, -4\cdot 17}=y^4+2y^3+3y^2 -2y+1, \,\,
\text{and} \,\, h_{119, -4\cdot 17}=y^4+2y^3+3y^2+6y+5.
$$
The notation $h_{N,D}$ is defined in section~\ref{sec 6}. Those
polynomials are all generating polynomials, with small
coefficients, of the Hilbert class field over $\Q[\sqrt{-17}]$.

In the case when the level is $71$, our computations agree with the
results of \cite{CMcKS04}, Section~4, since in this case we get the
same generating polynomials of the Hilbert class field of $\Q[\sqrt{-71}]$.

\subsection{Outline of the paper}

In section~\ref{sec:background} we cite results from the literature needed for
our paper. In section~\ref{sec:structure} we study properties of the Schwarzian
derivative $S(j_{N})$ of $j_{N}$, ultimately proving part (1) of the Main Theorem.
In section~\ref{sec 4} we describe an algorithm by which we evaluate the coefficients
of $P_{N}$ and $Q_{N}$, and compute the polynomial $h_{N}$ where $Q_{N} = 2h_{N}^{2}$.
A complete list of all the polynomials $h_{N}$ is given in Table~\ref{Table h_N}.
Finally, in section~\ref{sec 6} we discuss the applications of our results to explicit class field
theory. The result is stated in Corollary~\ref{cor:explicit class field th.}
to which we refer the reader for a precise statement.

For the convenience of the reader, we have placed in appendices lists of information
generated from our analysis.  In Appendix~\ref{sec A1} we write the polynomials
$P_{N}$ and $Q_{N}$ for all levels $N$ of genus zero groups $\Gamma_0(N)^+$.  In
Appendix~\ref{sec A2} we list the roots of each $h_{N}$, where each root is given
precisely in radicals.  In Appendix~\ref{sec A3} we list the numerical approximations
of $j_{N}(e)$ for each level $N$ of genus zero group $\Gamma_0(N)^+$
and corresponding elliptic points $e$.
As a particular example, the case $N=5$ is discussed in
the body of the paper.  When combining the discussion of the example
$N=5$ with the results listed in the three appendices, the interested
reader will arrive at similar conclusions for all other cases.

\subsection{Computer assistance}

Computer algebra was used to assist our computations. Taking results from
\cite{JST2,JST3,JST15URL} for the Hauptmoduli, we used symbolic algebra of PARI/GP
\cite{PARI2} to perform most of the algorithm of section~\ref{sec 4}.
Since we had it readily available, we used our own C-code linked against the GMP
Bignum Library \cite{GMP} to solve (\ref{differential eq}a) in rational arithmetic
for the polynomials $P_N$ and $Q_N$. Moreover, in order to produce
a part of the data in Table~\ref{Table with disc} below, related to even levels $N$,
we used Alnuth package of GAP \cite{GAP} to determine whether some irreducible factors of $h_N$
generate the same field as factors of $h_m$, for some odd divisor $m$ of $N$.

\section{Background material}\label{sec:background}

\subsection{Holomorphic modular forms}

Let $\Gamma$ be a Fuchsian group of the first kind. Following \cite{Serre73},
we define a weakly modular form $f$ of weight $2k$ for $k \geq 1$ associated
to $\Gamma$ to be a function $f$ which is meromorphic on $\h$ and satisfies
the transformation property
$$
f\left(\frac{az+b}{cz+d}\right) = (cz+d)^{2k}f(z)
\quad \text{for all $\begin{pmatrix} a&b \\ c&d \end{pmatrix} \in \Gamma$.}
$$

\noindent Assume that $\Gamma$ has at least one class of parabolic elements.
By transforming coordinates, if necessary, we may always assume that the
parabolic subgroup of $\Gamma$ has a fixed point at $i\infty$, with
identity scaling matrix. In this situation, any weakly modular form $f$
will satisfy the relation $f(z+1)=f(z)$, so we can write
$$
f(z) = \sum\limits_{n=-\infty}^{\infty}a_{n}q^{n}
\quad \text{where $q = e^{2\pi iz}$.}
$$
If $a_{n} = 0$ for all $n < 0$, then $f$ is said to be holomorphic in the
cusp. A holomorphic modular form with respect to $\Gamma$ is a weakly holomorphic modular form
which is holomorphic on $\h$ and in all of the cusps of $\Gamma$.
A weakly holomorphic modular form with respect to $\Gamma$ is called a cusp form,
if $a_{n} = 0$ for all $n \leq 0$.

\subsection{Modular forms on surfaces $X_N$}

From Proposition~7, page~II-7, of \cite{SCM66}, we immediately obtain the
following Riemann-Roch type formula which relates the number of zeros of a
modular form, counted with multiplicity, with its weight and volume of $X_N$.

\begin{proposition}\label{prop sum over zeros}
Let $f$ be a modular form on $X_N$ of weight $2k$, not identically zero.
Let $\F_N$ denote the fundamental domain of $X_N$ and let $v_z(f)$ denote the
order of zero $z$ of $f$, or minus the order of pole of $f$. Then,
\begin{align*}
k \frac{\vol(X_N)}{2\pi}
= v_{i\infty}(f) + \sum_{e \in \E_N} \frac{1}{\ord(e)} v_{e}(f)
+ \sum_{z\in \F_N \setminus \E_N} v_z(f),
\end{align*}
where $\E_N$ denotes the set of elliptic points in $\F_N$ and
$\ord(e)$ is the order of the elliptic point $e\in \E_N$.
\end{proposition}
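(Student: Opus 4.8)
The plan is to establish this \emph{valence formula} by the classical argument-principle method, which generalizes the $k/12$-formula for $\PSL(2,\Z)$. Fix a modular form $f$ of weight $2k$ on $X_N$, not identically zero; since the weight is even, $f$ descends to $\overline{\Gamma_0(N)^+}$, and we realize a fundamental domain $\F_N$ as a hyperbolic polygon whose sides are geodesic arcs, paired in couples $(s,s')$ with $s' = \gamma_s(s)$ for suitable $\gamma_s \in \Gamma_0(N)^+$ acting as M\"obius transformations. Replacing $f$ by a generic constant multiple (equivalently, nudging $\F_N$), we may assume $f$ has no zero or pole on $\partial\F_N$ other than possibly at the cusp $i\infty$ and at the elliptic vertices. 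Truncate the cusp at a large height $T$ along the horocycle $\Im z = T$, and excise a small geodesic arc around each elliptic vertex, producing a domain $\F_N^{*}$; the argument principle then gives, in the limit $T \to \infty$,
\begin{equation*}
\sum_{z \in \F_N \setminus \E_N} v_z(f) \;=\; \frac{1}{2\pi i}\oint_{\partial\F_N^{*}} \frac{f'(z)}{f(z)}\,dz .
\end{equation*}

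The next step is to evaluate the three kinds of boundary contribution. Along the horocyclic segment at height $T$, the $q$-expansion $f(z) = \sum_{n \geq n_0} a_n q^n$ with $a_{n_0} \neq 0$ and $n_0 = v_{i\infty}(f)$ gives $f'/f = 2\pi i\,n_0 + o(1)$ as $\Im z \to \infty$, so (the cusp having width one) this arc contributes $-v_{i\infty}(f)$ once orientation is taken into account. Around an elliptic vertex $e$ one has $f'/f = v_e(f)/(z-e) + O(1)$, and $\F_N$ subtends the angle $2\pi/\ord(e)$ at $e$ — this being precisely the defining property of an elliptic point of order $\ord(e)$ — so the excised arcs in the orbit of $e$ contribute $-\tfrac{1}{\ord(e)}v_e(f)$ in total. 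Substituting these into the displayed identity and rearranging yields
\begin{equation*}
v_{i\infty}(f) + \sum_{e \in \E_N}\frac{1}{\ord(e)}v_e(f) + \sum_{z \in \F_N \setminus \E_N} v_z(f) \;=\; S ,
\end{equation*}
where $S$ denotes the total contribution of the geodesic side arcs.

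It remains to prove $S = k\,\vol(X_N)/(2\pi)$, which is the heart of the matter. Writing $\eta := (f'/f)\,dz$ and using the automorphy relation $f(\gamma_s z) = (c_s z + d_s)^{2k} f(z)$, where $(c_s, d_s)$ is the lower row of an $\SL(2,\R)$-representative of $\gamma_s$ (a Fricke or Atkin--Lehner generator is a scalar times an integer matrix, and the scalar drops out of the logarithmic derivative), one gets $\gamma_s^{*}\eta - \eta = 2k\,d\log(c_s z + d_s)$; hence the $\eta$-parts cancel over the paired sides of $\partial\F_N^{*}$ and
\begin{equation*}
S \;=\; \pm\,\frac{2k}{2\pi i}\sum_{\text{side-pairs }(s,\gamma_s)} \int_{s} d\log(c_s z + d_s) .
\end{equation*}
One now evaluates this sum geometrically: using $|c_s z + d_s|^{2} = \Im z/\Im(\gamma_s z)$ and the fact that the sides of $\F_N$ are geodesics, the real parts telescope to zero, while the imaginary parts assemble — via the Gauss--Bonnet theorem for the hyperbolic polygon $\F_N$, whose vertex angles are the elliptic angles $2\pi/\ord(e)$ and the cusp angle $0$ — into $\pm\,\vol(X_N)/(4\pi)$, which gives $S = k\,\vol(X_N)/(2\pi)$ as asserted. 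I expect this last step — turning the boundary integral of $d\log(c_s z + d_s)$ over the identified geodesic sides into the hyperbolic area of $X_N$ — to be the main obstacle, the delicate points being the combinatorics of the side-pairing for a fundamental domain of $\Gamma_0(N)^+$, which is governed by the Atkin--Lehner involutions, and the bookkeeping for an elliptic orbit appearing at several boundary vertices.

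Finally, two shortcuts are worth noting. One can bypass the polygon analysis by interpreting $f$ as a meromorphic section of a line bundle on the compact Riemann surface $X_N$: the difference between its numbers of zeros and poles then equals the degree of the bundle, which is $k\,\vol(X_N)/(2\pi)$, and this is exactly the content of Proposition~7 of \cite{SCM66} invoked in the statement. Alternatively, since $\Gamma_0(N)^+$ is commensurable with $\PSL(2,\Z)$ — indeed it contains $\Gamma_0(N)$ with finite index — one can pull $f$ back to $\Gamma_0(N)$, apply the classical valence formula there, and descend along the covering $\overline{\Gamma_0(N)}\backslash\h \to X_N$, at the cost of tracking the index $[\overline{\Gamma_0(N)^+}:\overline{\Gamma_0(N)}] = 2^{\omega(N)}$, the ramification of this covering over cusps and elliptic points, and the volume relation $\vol(\overline{\Gamma_0(N)}\backslash\h) = 2^{\omega(N)}\,\vol(X_N)$.
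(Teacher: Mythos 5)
The paper does not actually prove this proposition; it deduces it ``immediately'' from Proposition~7, page~II-7, of \cite{SCM66}, which is precisely your line-bundle shortcut: $f$ descends to a meromorphic section of a line bundle on the compact curve $X_N$ whose degree is $k\,\vol(X_N)/(2\pi)$, and the stated formula is the degree count. You correctly identify this as the proof the paper invokes.

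Your primary route --- the argument-principle proof modeled on the $\PSL(2,\Z)$ valence formula --- would be a legitimate independent argument, but as written it has a real gap, which you yourself flag. The bookkeeping up to $v_{i\infty}(f)+\sum_e v_e(f)/\ord(e)+\sum_z v_z(f)=S$ is fine: the horocyclic cap contributes $-v_{i\infty}(f)$, and each excised elliptic vertex of angle $2\pi/\ord(e)$ contributes $-v_e(f)/\ord(e)$. What is not established is that $S=k\,\vol(X_N)/(2\pi)$. You correctly reduce $S$ to a multiple of $\sum_{(s,\gamma_s)}\int_s d\log(c_sz+d_s)$ via the cocycle identity $\gamma_s^{*}\eta-\eta=2k\,d\log(c_sz+d_s)$, but then only \emph{assert} that ``the real parts telescope to zero while the imaginary parts assemble via Gauss--Bonnet into $\pm\vol(X_N)/(4\pi)$.'' That sentence states exactly what remains to be proved; it is not an argument. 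For $\PSL(2,\Z)$ the step collapses to a single explicit arc computation because the only nontrivial side pairing is $z\mapsto -1/z$ on $|z|=1$; for $\Gamma_0(N)^+$, whose fundamental polygons have many sides identified by Atkin--Lehner involutions and whose elliptic orbits can appear at several boundary vertices, one needs a careful Stokes/Gauss--Bonnet computation over the whole polygon (Shimura, \emph{Introduction to the Arithmetic Theory of Automorphic Functions}, Proposition~2.16, carries this out for general Fuchsian groups of the first kind). Either complete that step in detail, or retreat to the degree argument --- which is what the paper in fact does by citing \cite{SCM66}.
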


\section{Determining the polar structure of $S(j_N)/(j_N')^2$}\label{sec:structure}

We begin with the following elementary proposition.

\begin{proposition}
For any square-free $N$ such that the group $\Gamma_0(N)^+$ has genus zero,
the function $S(j_N)(z)(j_N'(z))^2$ on $\h$ is a weight eight modular form with respect
to $\Gamma_0(N)^+$ and is holomorphic function on $\h$ whose only pole is at $i\infty$ with
order two. Furthermore, the $q$-expansion of $S(j_N)(z)(j_N'(z))^2$ is given by
\begin{align}\label{q exp S(j)j' squared}
S(j_N)(z)(j_N'(z))^2
= (2\pi)^4 \left(-\frac{1}{2q^2} + \sum_{k=0}^{\infty} b_N(k)q^k \right),
\end{align}
where, in the notation of \eqref{q exp jN},
\begin{align*}
b_N(k)= -(k+1)[(k+1)^2+3(k+1)+1]a_N(k+1)
+ \frac{1}{2}\sum_{l=1}^{k-1}l^2(k-l)(5l-3k)a_N(l)a_N(k-l).
\end{align*}
\end{proposition}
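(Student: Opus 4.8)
The plan is to establish the three assertions in order: modularity of weight eight, the claim about holomorphy and the pole at $i\infty$, and then the explicit $q$-expansion. For the first two, I would use the general facts already recalled in Section~\ref{sec 1.2}: since $g(z) = (az+b)/(cz+d)$ is a fractional linear transformation, $S(g) \equiv 0$, so the cocycle relation $S(f\circ g) = (S(f)\circ g)(g')^2 + S(g)$ gives $S(j_N \circ \gamma) = (S(j_N)\circ\gamma)(\gamma')^2$ for $\gamma \in \Gamma_0(N)^+$. Combined with $j_N \circ \gamma = j_N$ (invariance of the Hauptmodul) and the chain rule $(j_N \circ \gamma)' = (j_N' \circ \gamma)\gamma'$, one obtains $(S(j_N)\circ\gamma)(j_N'\circ\gamma)^2 = (\gamma')^{-2}(\gamma')^{-2} S(j_N)(j_N')^2 \cdot (\gamma')^4$ — more carefully, writing $\gamma'(z) = (cz+d)^{-2}$, the product $S(j_N)(j_N')^2$ picks up the factor $(cz+d)^{8}$, which is exactly the weight-eight transformation law. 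I would simply record this computation cleanly.

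For holomorphy and the polar structure, I would argue as follows. The Hauptmodul $j_N$ is holomorphic on $\h$ with nonvanishing derivative away from elliptic points (where $j_N'$ vanishes) and, in fact, $j_N$ is a local biholomorphism at every non-elliptic point; at an elliptic point $e$ of order $m$, $j_N - j_N(e)$ vanishes to order $m$, so $j_N'$ vanishes to order $m-1$, while $S(j_N)$ has a double pole there. The product $S(j_N)(j_N')^2$ therefore has, near $e$, a zero of order $2(m-1)$ from $(j_N')^2$ against a pole of order $2$ from $S(j_N)$ — but more precisely, $S(j_N)(j_N')^2$ is, up to sign, the pullback under $j_N$ of a rational function in the Hauptmodul with a double pole at $j_N(e)$, so composing with $j_N - j_N(e) \sim c(z-e)^m$ shows the apparent singularity is cancelled and $S(j_N)(j_N')^2$ is in fact holomorphic at $e$. (Alternatively, and perhaps more cleanly, I would invoke the differential equation $S(j_N) + R_N(j_N)(j_N')^2 = 0$ heuristically, but since that is what we are building toward, I prefer the direct local analysis.) Away from the cusp and elliptic points there is nothing to check. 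At $i\infty$: from \eqref{q exp jN}, $j_N = q^{-1} + O(q)$, so $j_N' = \frac{dj_N}{dz} = 2\pi i\, q \frac{dj_N}{dq} = 2\pi i(-q^{-1} + O(q))$, whence $(j_N')^2 = (2\pi i)^2(q^{-2} + O(1)) = -(2\pi)^2(q^{-2}+O(1))$; and $S(j_N)$, computed from $j_N''/j_N' = (\log j_N')'$, behaves like a constant plus $O(q)$ near $i\infty$ (one checks $j_N''/j_N' = 2\pi i(1 + O(q^2))$, so its derivative is $O(q)$ and its square is $O(q^2)$, giving $S(j_N) = O(q)$ — hence $S(j_N)(j_N')^2$ has a pole of order exactly two at $i\infty$). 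This confirms the form $(2\pi)^4(-\tfrac12 q^{-2} + \sum_{k\ge 0} b_N(k) q^k)$ with leading coefficient $-\tfrac12$ after tracking the constant; the absence of a $q^{-1}$ term follows from $a_N(0)=0$.

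The main work — and the step I expect to be the real obstacle — is the explicit formula for $b_N(k)$. Here I would compute the $q$-expansion of $S(j_N)(j_N')^2$ directly by formal power series manipulation. Set $\vartheta = q\,\frac{d}{dq} = \frac{1}{2\pi i}\frac{d}{dz}$, so that $j_N' = 2\pi i\, \vartheta j_N$ and, writing $F = \vartheta j_N$, one has $S(j_N)(j_N')^2 = (2\pi i)^2\big[\vartheta(\vartheta F/F)\cdot F - \tfrac12(\vartheta F/F)^2 F^2\big]$ after rewriting the Schwarzian in terms of $\vartheta$-logarithmic derivatives; I would expand this as $(2\pi i)^4$ times a series in $q$. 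Substituting $j_N = \sum_{k\ge -1} a_N(k) q^k$ (with $a_N(-1)=1$, $a_N(0)=0$) and carefully collecting the coefficient of $q^k$ yields a convolution expression; the terms linear in the $a_N$'s produce the $-(k+1)[(k+1)^2+3(k+1)+1]a_N(k+1)$ piece (coming from the $q^{-1}$ term of $j_N$ interacting with higher terms, where the polynomial $m^3+3m^2+m$ at $m=k+1$ is exactly what the triple $\vartheta$-type operators contribute), and the bilinear terms give the sum $\tfrac12\sum_{l=1}^{k-1} l^2(k-l)(5l-3k)a_N(l)a_N(k-l)$. The bookkeeping is delicate — one must be scrupulous about which products of $a_N(-1)=1$ with $a_N(m)$ get absorbed into the "linear" term versus the convolution, and about the symmetrization of the double sum (the factor $5l-3k$ is not symmetric in $l \leftrightarrow k-l$, so the $\tfrac12$ and the stated summation range must be checked against a symmetric rewriting). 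I would verify the final formula against a low-order expansion (say $k=1,2,3$) to catch sign and indexing errors, which is precisely the kind of check the authors presumably performed with computer algebra.
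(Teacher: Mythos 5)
Your modularity argument (Schwarzian cocycle gives $S(j_N)$ weight four, times the square of the weight-two form $j_N'$ gives weight eight) is the same as the paper's and is fine. But the rest of the proposal has two genuine problems. First, your analysis at the cusp is wrong as written: from $j_N''/j_N'=-2\pi i\,(1+O(q^2))$ one gets $(j_N''/j_N')^2=-(2\pi)^2(1+O(q^2))$, so $S(j_N)=\tfrac{1}{2}(2\pi)^2+O(q^2)$; it is \emph{not} $O(q)$. Indeed, if $S(j_N)$ were $O(q)$ then $S(j_N)(j_N')^2$ would have at most a simple pole at $i\infty$, contradicting the order-two pole and the leading coefficient $-\tfrac12(2\pi)^4$ you want — the $-\tfrac12$ comes precisely from the nonzero constant term of $S(j_N)$ at the cusp. (Your local argument at elliptic points also partly leans on the rational function $R_N$, which is what the paper is building toward; the bare order count $2(m-1)\ge 2$ versus a double pole does suffice, but you should drop the circular remark.) The paper avoids all of this with one identity: $S(j_N)(j_N')^2=j_N'''j_N'-\tfrac32 (j_N'')^2$, a polynomial in derivatives of the holomorphic function $j_N$, so holomorphy on $\h$ and the location/order of the pole are immediate.

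Second, and more seriously, the heart of the proposition — the explicit formula for $b_N(k)$ — is never actually derived. You describe a plan via $\vartheta=q\,d/dq$ and $F=\vartheta j_N$ (and your master identity is miswritten: it should be $(2\pi i)^4\bigl[\vartheta(\vartheta F/F)\,F^2-\tfrac12(\vartheta F/F)^2F^2\bigr]$), acknowledge the bookkeeping is delicate, and defer to low-order numerical checks; dividing by $F$ also forces a series inversion that the problem does not require. Using the identity above, the computation is short and should be carried out: since $j_N^{(l)}=(2\pi i)^l\bigl((-1)^l q^{-1}+\sum_{k\ge1}k^l a_N(k)q^k\bigr)$, the coefficient of $q^k$ in $S(j_N)(j_N')^2/(2\pi)^4$ has linear part $-\bigl[(k+1)^3+3(k+1)^2+(k+1)\bigr]a_N(k+1)=-(k+1)\bigl[(k+1)^2+3(k+1)+1\bigr]a_N(k+1)$ and bilinear part $\sum_{l=1}^{k-1}\bigl(l^3(k-l)-\tfrac32 l^2(k-l)^2\bigr)a_N(l)a_N(k-l)=\tfrac12\sum_{l=1}^{k-1}l^2(k-l)(5l-3k)a_N(l)a_N(k-l)$, which is exactly \eqref{q exp S(j)j' squared}. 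Without this computation (or an equivalent one), the formula for $b_N(k)$ is asserted rather than proved.
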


\begin{proof}
Since
$$
S(j_N)(z)(j_N'(z))^2=j_N'''(z)j_N'(z)-\frac{3}{2}(j_N''(z))^2,
$$
it is immediate that the function $S(j_N)(z)(j_N'(z))^2$ is holomorphic on
$\h$ with the only pole at $i\infty$ of order two.
From the discussion in section~\ref{sec 1.2}, we conclude that the Schwarzian $S(j_N)$
is a meromorphic modular form of weight four associated to $\Gamma_0(N)^+$. Since $j_N'(z)$ is
a meromorphic weight two form, we conclude that $S(j_N)(z)(j_N'(z))^2$ has weight eight.

Beginning with the $q$-expansion~\eqref{q exp jN} we derive the $q$-expansion
of the $l$th derivative $j_N^{(l)}(z)$ for $l\geq 1$, namely the expansion
$$
j_N^{(l)}(z)
= (2\pi i )^l\left(\frac{(-1)^l}{q} + \sum_{k=1}^{\infty} k^l a_N(k)q^k \right),
$$
hence
$$
S(j_N)(z)(j_N'(z))^2=(2\pi)^4
\left( -\frac{1}{q}+\sum_{k=1}^{\infty} k^3 a_N(k)q^k \right)
\left( -\frac{1}{q}+\sum_{k=1}^{\infty} k a_N(k)q^k \right)
-\frac{3}{2}(2\pi)^4
\left( \frac{1}{q}+\sum_{k=1}^{\infty} k^2 a_N(k)q^k \right)^2.
$$
A straightforward computation yields~\eqref{q exp S(j)j' squared}.
\end{proof}

\begin{corollary}\label{cor:weight zero}
For any square-free $N$ such that the group $\Gamma_0(N)^+$ has genus zero,
the function $S(j_N)(z)/(j_N'(z))^2$ is a weight zero modular form on $X_N$
whose zero at $i\infty$ has order two.
\end{corollary}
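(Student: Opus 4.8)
The plan is to deduce this corollary directly from the preceding proposition, using only elementary bookkeeping about $q$-expansions. First I would recall from the proposition that $S(j_N)(z)(j_N'(z))^2$ is a weight eight modular form with respect to $\Gamma_0(N)^+$, holomorphic on $\h$, with a pole of order two at $i\infty$ and with $q$-expansion beginning $(2\pi)^4(-\tfrac{1}{2}q^{-2}+\cdots)$. Separately, differentiating the $q$-expansion \eqref{q exp jN} term by term gives
$$
j_N'(z) = 2\pi i\left(-\frac{1}{q} + \sum_{k=1}^{\infty} k\,a_N(k)q^k\right),
$$
so $(j_N'(z))^2 = (2\pi i)^2(q^{-2} + \cdots) = -(2\pi)^2(q^{-2}+\cdots)$, a weight four meromorphic form that is holomorphic and nonvanishing on a punctured neighborhood of $i\infty$ (its only pole on $\h$ coming from the pole of $j_N$ at the cusp, with leading term $q^{-2}$).

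Next I would form the quotient $S(j_N)(z)/(j_N'(z))^2 = \bigl[S(j_N)(z)(j_N'(z))^2\bigr]/(j_N'(z))^4$. Since both numerator (weight eight) and the denominator $(j_N'(z))^4$ (weight sixteen... wait, weight eight) — more carefully, $S(j_N)$ has weight four by the Schwarzian discussion in section~\ref{sec 1.2}, and $(j_N'(z))^2$ has weight four, so the ratio has weight zero; this is the cleanest route and I would present it that way rather than through the eighth-power manipulation. The weight-zero transformation property is then immediate from the transformation laws of $S(j_N)$ and of $j_N'$ under $\Gamma_0(N)^+$.

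The remaining point is the order of vanishing at $i\infty$. I would compute the leading behavior: from the proposition, $S(j_N)(z)(j_N'(z))^2 = (2\pi)^4(-\tfrac12 q^{-2} + b_N(0) + \cdots)$, while $(j_N'(z))^4 = (2\pi)^4(q^{-4} + \cdots)$ since $(j_N'(z))^2 = -(2\pi)^2(q^{-2} - 2\sum k a_N(k)q^{k-1}+\cdots)$ has leading term $-(2\pi)^2 q^{-2}$ and hence its square has leading term $(2\pi)^4 q^{-4}$. Dividing, the quotient $S(j_N)/(j_N')^2$ has $q$-expansion with leading term $\bigl(-\tfrac12 q^{-2}\bigr)/\bigl(q^{-4}\bigr) = -\tfrac12 q^{2}$, so it vanishes to order exactly two at $i\infty$. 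There is no real obstacle here; the only thing requiring a moment's care is tracking the signs and the powers of $2\pi i$ through the division, and confirming that the leading coefficient of the numerator, $-\tfrac12$, is nonzero so that the order of vanishing is exactly (not merely at least) two. I would close by remarking that, since $j_N'$ has no zeros or poles on $\h$ away from the cusp contributions (being the derivative of a degree-one map to $\p^1$, its divisor on $\F_N$ is supported at the elliptic points and the cusp, a fact we will use later), the quotient is meromorphic on $\h$ with controlled polar locus; but for this corollary only the statements about weight zero and the order-two zero at $i\infty$ are needed.
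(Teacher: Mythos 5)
Your proposal is correct and follows essentially the same route as the paper: the weight-zero property comes from the fact that $S(j_N)$ and $(j_N')^2$ are both (meromorphic) weight four forms, and the order-two vanishing at $i\infty$ comes from dividing the leading term $(2\pi)^4\bigl(-\tfrac12 q^{-2}\bigr)$ of $S(j_N)(j_N')^2$ from the preceding proposition by the leading term $(2\pi)^4 q^{-4}$ of $(j_N')^4$, exactly the computation the paper itself invokes in the proof of Theorem~\ref{theorem 6}. Your sign and $(2\pi i)$-power bookkeeping is accurate, and the closing observation that the nonvanishing of the coefficient $-\tfrac12$ pins the order at exactly two is the right point to emphasize.
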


We now use Proposition~\ref{prop sum over zeros} in order to determine the
set of zeros of the weight two form $j_N'(z)$ and corresponding multiplicities.
From an inspection of tables given in
\cite{Cum04}, we conclude that for all square-free $N$ such that the surface $X_N$
has genus zero, the set $\E_N$ of elliptic points of $X_N$ consists of a
certain number of order two elliptic points and possibly one point of
order three, four or six. For $n\in\{3,4,6\}$, we define the symbol
$\delta_{n,N}$ to be equal to one if there exists an elliptic point on
$X_N$ of order $n$ and set $\delta_{n,N}=0$ otherwise.

\begin{proposition} \label{prop:zeros of j'}
Let $N$ be a square-free number such that the surface $X_N$ has genus zero.
Then the set of zeros of $j_N'$ is equal to the set $\E_N$ of elliptic
points of $X_N$. In the case when $X_N$ has no order four or six elliptic
points, the order $m_N(e)$ of every zero $e\in\E_N$ of $j_N'$ is
$\ord(e)-1$. In the case when $X_N$ has one order four or six
elliptic point, then either the order of all zeros $e\in\E_N$ is
$\ord(e)-1$, or the order of all but one zero at order two
elliptic points is one, there is an order two zero of $j_N'$ at some order
two elliptic point and the order of zero at order four or six
elliptic point is one or two respectively.
\end{proposition}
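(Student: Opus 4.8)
The plan is to apply the Riemann--Roch type identity of Proposition~\ref{prop sum over zeros} to the weight two form $f=j_N'$, using the local behaviour of $j_N$ at the elliptic points to pin down the orders $v_e(j_N')$.

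First I would collect the data of $j_N'$. Differentiating $j_N(\gamma z)=j_N(z)$ shows, as already observed above, that $j_N'$ is a meromorphic weakly modular form of weight two for $\Gamma_0(N)^+$; it is holomorphic on $\h$ because $j_N$ is, and the expansion \eqref{q exp jN} gives $j_N'(z)=2\pi i\bigl(-q^{-1}+\sum_{k\ge 1}k\,a_N(k)q^k\bigr)$, so $v_{i\infty}(j_N')=-1$. Since $X_N$ has genus zero and exactly one cusp, the Gauss--Bonnet formula gives
\[
\frac{\vol(X_N)}{2\pi}=-2+1+\sum_{e\in\E_N}\Bigl(1-\tfrac{1}{\ord(e)}\Bigr)=-1+\sum_{e\in\E_N}\Bigl(1-\tfrac{1}{\ord(e)}\Bigr).
\]

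Next I would study $j_N'$ near an elliptic point $e$ of order $m=\ord(e)$. In the Cayley coordinate $t=(z-e)/(z-\bar e)$ the stabilizer of $e$ in $\overline{\Gamma_0(N)^+}$ acts by $t\mapsto\zeta t$ with $\zeta$ a primitive $m$-th root of unity, so the invariance of $j_N$ forces $j_N$ to be a holomorphic function of $t^m$ near $e$; hence $j_N-j_N(e)$ vanishes at $e$ to an order that is a positive multiple of $m$, and therefore
\[
v_e(j_N')\equiv m-1\pmod m,\qquad\text{in particular}\qquad v_e(j_N')\ge m-1.
\]
(As $j_N$ generates the function field it is an isomorphism $X_N\to\p^1$, so this order is in fact exactly $m$; only the lower bound and the congruence are used below.) Because $j_N'$ is holomorphic on $\h$, one also has $v_z(j_N')\ge0$ for every $z\in\F_N$.

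Finally I would assemble everything. Applying Proposition~\ref{prop sum over zeros} to $j_N'$ (which has weight two, so $k=1$), substituting $v_{i\infty}(j_N')=-1$ and the volume formula above, and rearranging, one gets
\[
\sum_{e\in\E_N}\frac{(\ord(e)-1)-v_e(j_N')}{\ord(e)}=\sum_{z\in\F_N\setminus\E_N}v_z(j_N').
\]
The right-hand side is a non-negative integer; by $v_e(j_N')\ge\ord(e)-1$ each term on the left is $\le 0$, and by the congruence $v_e(j_N')\equiv\ord(e)-1\pmod{\ord(e)}$ each such term is a non-positive integer. Hence both sides vanish, forcing $v_z(j_N')=0$ for all $z\in\F_N\setminus\E_N$ and $v_e(j_N')=\ord(e)-1$ for all $e\in\E_N$. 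This shows that the zero set of $j_N'$ is exactly $\E_N$, and that in every case the first of the listed alternatives holds; the second alternative is then automatically excluded, since it would force $v_e(j_N')=2$ at an order two elliptic point (impossible, that order being odd) and $v_e(j_N')$ strictly smaller than $\ord(e)-1$ at the order four or six point (impossible by the local lower bound). I expect the genuinely delicate point to be this local analysis at the order four and order six points: the valence identity by itself is still consistent with the second alternative, and discarding it is exactly what needs the orbifold/transformation-law input — which is presumably why the statement separates those cases.
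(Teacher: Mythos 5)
Your proof is correct, and it actually establishes something slightly stronger than the stated proposition, by a route that differs in one essential respect from the paper's.

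The paper's argument uses only a weak local input: for an \emph{order-two} elliptic element $\eta$ fixing $e$, the weight-two transformation law gives $j_N'(e) = (ce+d)^2 j_N'(e) = -j_N'(e)$, so $v_e(j_N')\geq 1$; this is then fed into the valence identity \eqref{gb_rr}, and the half-integrality of the resulting terms pins down the multiplicities \emph{up to} the stated dichotomy at levels with an order-four or order-six point — the paper stops there, because both branches are numerically consistent with \eqref{gb_rr}. You instead extract from the $\overline{\Gamma_0(N)^+}$-invariance of $j_N$, in the Cayley coordinate $t=(z-e)/(z-\bar e)$, the full local constraint $v_e(j_N')\equiv\ord(e)-1\pmod{\ord(e)}$ together with $v_e(j_N')\geq\ord(e)-1$ at \emph{every} elliptic point. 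Fed into the same valence identity, this makes each summand on the left a non-positive \emph{integer}, so the dichotomy collapses: the second alternative would require $v_{e'}(j_N')=2$ at an order-two point (even, contradicting $\equiv 1\pmod 2$) and $v_{e_4}(j_N')=1$ resp.\ $v_{e_6}(j_N')=2$ (both less than $\ord-1$), and is thus impossible. In short, the paper proves the disjunction; you prove that the first disjunct always holds. This is a genuine strengthening, not a gap on your side — the paper is simply content with the weaker conclusion because Theorem~\ref{theorem 6} only needs the multiplicity $m_N(e)$, not its exact value, and that in turn is consistent with either branch. Your closing remark speculating about why the paper keeps the two cases apart is on target: the valence identity alone cannot separate them, and the paper does not invoke the orbifold/rotation congruence that you do.
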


\begin{proof}
Let $\E_{2,N}$ denote the set of elliptic points of $X_N$ of order two.
If we insert $k=1$ into Proposition~\ref{prop sum over zeros} and combine it with the
Gauss-Bonet formula for the volume of $X_N$ we arrive at the equation
\begin{align}\label{gb_rr}
-1+\sum_{e\in\E_{2,N}}\frac{1}{2}+ \delta_{n,N}\frac{n-1}{n}
= -1+\sum_{e\in\E_{2,N}}\frac{v_e(j_N')}{2}
+ \delta_{n,N}\frac{v_{e_{n,N}}(j_N')}{n}
+ \sum_{z\in \F_N \setminus \E_N} v_z(j_N'),
\end{align}
for $n\in\{3,4,6\}$, where we denoted by $e_{n,N}$ the order $n$
elliptic point of $X_N$, if it exists.

The form $j_N'$ vanishes at all $e\in\E_{2,N}$, since the
transformation rule for $j_N'$ and arbitrary order two elliptic element
$\eta\in\Gamma_0(N)^+$ with fixed point $e$ reduces to
$$
j_N'(\eta(e))=j_N'(e)=(i)^2j_N'(e)
$$
which implies that $j_N'(e)=0$. In other words,
$v_e(j_N')\geq 1$ for all $e\in\E_{2,N}$. From \eqref{gb_rr}, we have
that $v_z(j_N')=0$ for
all $z\in \F_N \setminus \E_N$. In the case when $N$ is such that the
surface $X_N$ possesses no order four or six elliptic points, we immediately
deduce that the order of zero $e\in\E_N$ is $\ord(e)-1$. In the case when
$\delta_{4,N}=1$, either order of all zeros $e\in\E_N$ of $j_N'$ is
$\ord(e)-1$, or, writing $3/4$ as $1/2+1/4$ we deduce that there is an order
two elliptic point $e'\in\E_N$ which is the order two zero of $j_N'$ and the
order of zero at the order four elliptic point $e_{4,N}$ of $X_N$ is one.
A similar argument proves the statement in the case when $X_N$ has one order
six elliptic point.
\end{proof}

We can now determine the number and location of poles of
the meromorphic modular form $S(j_N)(z)/(j_N'(z))^2$.

\begin{proposition}\label{prop poles of S/j'2}
Let $N$ be a square-free number such that the group $\Gamma_0(N)^+$ has genus
zero. Then, the set of poles of the meromorphic modular form
$S(j_N)(z)/(j_N'(z))^2$ is exactly the set $\E_N $ of elliptic points of
$X_N$. Moreover, each pole has the order equal to $2(m_N(e)+1)$, where
$m_N(e)$ denotes the order of the elliptic point $e$ as a zero of $j_N'$.
\end{proposition}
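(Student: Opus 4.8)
The plan is to exhibit $S(j_N)/(j_N')^2$ as a quotient whose numerator and denominator have divisors already determined above, and then to read off the order of each pole from a short Taylor expansion at the elliptic point. Recall that
$$
S(j_N)(z)(j_N'(z))^2 = j_N'''(z)j_N'(z) - \tfrac{3}{2}(j_N''(z))^2 =: g_N(z),
$$
which is holomorphic on $\h$ because $j_N$ is; hence $S(j_N)(z)/(j_N'(z))^2 = g_N(z)/(j_N'(z))^4$. By Proposition~\ref{prop:zeros of j'} the function $(j_N')^4$ is holomorphic on $\h$ with zero set exactly $\E_N$, vanishing at each $e\in\E_N$ to order $4m_N(e)$. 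Since $S(j_N)/(j_N')^2$ is a weight zero modular form (Corollary~\ref{cor:weight zero}), its divisor is $\Gamma_0(N)^+$-invariant, so it suffices to examine it on $\F_N$ together with the cusp $i\infty$: at $i\infty$ Corollary~\ref{cor:weight zero} provides a zero of order two, not a pole, while at every $z\in\F_N\setminus\E_N$ both $g_N$ and $(j_N')^4$ are holomorphic and non-vanishing. Therefore the pole set of $S(j_N)/(j_N')^2$ is contained in $\E_N$, and it only remains to compute $v_e(g_N)$ for each $e\in\E_N$.

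Fix $e\in\E_N$ and put $m=m_N(e)$; since $e$ is an elliptic point one has $m\geq1$. Because $j_N'$ has a zero of order exactly $m$ at $e$, integrating its leading term shows that
$$
j_N(z) = j_N(e) + c\,(z-e)^{m+1} + O\bigl((z-e)^{m+2}\bigr), \qquad c\neq0.
$$
Differentiating to obtain the leading terms of $j_N'$, $j_N''$ and $j_N'''$ at $e$ and substituting into $g_N = j_N'''j_N' - \tfrac{3}{2}(j_N'')^2$, one finds that the coefficient of $(z-e)^{2m-2}$ in $g_N$ equals $-\tfrac{1}{2}c^2 m(m+1)^2(m+2)$, which is nonzero since $c\neq0$ and $m\geq1$. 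Hence $v_e(g_N)=2m-2$, and consequently
$$
v_e\!\left(\frac{S(j_N)}{(j_N')^2}\right) = v_e(g_N) - 4\,v_e(j_N') = (2m-2)-4m = -2\bigl(m_N(e)+1\bigr),
$$
which is strictly negative. Thus every point of $\E_N$ is genuinely a pole of $S(j_N)/(j_N')^2$, of order exactly $2(m_N(e)+1)$.

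The argument is essentially formal, and the only step that really needs checking is that the leading coefficient of $g_N$ at $e$ does not vanish, i.e.\ that there is no accidental cancellation at order $(z-e)^{2m-2}$ between $j_N'''j_N'$ and $\tfrac{3}{2}(j_N'')^2$; once the three derivatives are expanded this is forced. As an alternative that sidesteps the derivative bookkeeping, one may write $j_N(z)-j_N(e)=\phi(z)^{m+1}$ near $e$ with $\phi$ holomorphic and $\phi'(e)\neq0$ (legitimate since $j_N-j_N(e)$ vanishes to order exactly $m+1$), and then combine the cocycle identity $S(f\circ g)=(S(f)\circ g)(g')^2+S(g)$ with $S(w\mapsto w^{m+1})=(1-(m+1)^2)/(2w^2)$ to get $S(j_N)(z)=\tfrac{1-(m+1)^2}{2}(z-e)^{-2}+O\bigl((z-e)^{-1}\bigr)$; this displays directly that $S(j_N)$ has a double pole at every elliptic point, and dividing by $(j_N')^2$ (which vanishes to order $m_N(e)$ there) recovers the pole order $2(m_N(e)+1)$.
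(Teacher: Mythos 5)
Your proof is correct and its main argument follows the same route as the paper's: write $S(j_N)/(j_N')^2 = g_N/(j_N')^4$ with $g_N = j_N'''j_N' - \tfrac{3}{2}(j_N'')^2$, invoke Proposition~\ref{prop:zeros of j'} to place the zeros of $(j_N')^4$, and compare $v_e(g_N) = 2(m_N(e)-1)$ with $v_e\bigl((j_N')^4\bigr) = 4m_N(e)$. Where you go beyond the paper is in exhibiting the explicit leading coefficient $-\tfrac12\,c^2\,m(m+1)^2(m+2)$ of $g_N$ at $e$ (which is indeed nonzero for $m\geq 1$, including $m=1$ where the $j_N'''j_N'$ term contributes nothing at order zero), making explicit the non-cancellation that the paper only asserts ``by studying the power series expansion.'' Your alternative via the cocycle identity together with $S(w\mapsto w^{m+1}) = (1-(m+1)^2)/(2w^2)$ is a genuinely different and cleaner route: it identifies the exact double pole of $S(j_N)$ itself at each elliptic point, so that the pole order $2(m_N(e)+1)$ of the quotient falls out immediately from $v_e\bigl((j_N')^2\bigr)=2m_N(e)$, with no derivative bookkeeping. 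Both versions are sound.
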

\begin{proof}
We write
$$
\frac{S(j_N)(z)}{(j_N'(z))^2}
= \frac{j_N'''(z)j_N'(z)-\frac{3}{2}(j_N''(z))^2}{(j_N'(z))^4}.
$$
Obviously, $S(j_N(z))$ is holomorphic everywhere, except eventually at zeros of $j_N'(z)$.
Therefore, by Proposition~\ref{prop:zeros of j'}, the set of poles of $S(j_N)(z)/(j_N'(z))^2$
is exactly $\E_N$.

If $e\in\E_N$ is a zero of $j_N'$ of order $m_N(e)$, then by studying the
power series expansion about $e$ we see that
$j_N'''(z)j_N'(z)-\frac{3}{2}(j_N''(z))^2$ is a weight eight meromorphic form
with zero at $e$ of order $2(m_N(e)-1)$. In particular, if $m_N(e)=1$
then the form is non-vanishing at $e$. Therefore, the order of the
pole of $S(j_N)(z)/(j_N'(z))^2$ at $e\in\E_N$ is
$$
4m_N(e)-2(m_N(e)-1)=2(m_N(e)+1),
$$
as claimed.
\end{proof}

\begin{theorem}\label{theorem 6}
For any square-free $N$ such that the group $\Gamma_0(N)^+$ has genus zero,
there exists an integer $n_N\geq 1$ and polynomials $P_N$ and $Q_N$ of
degrees $n_N$ and $n_N+2$ respectively such that
\begin{align}\label{differential eq}
\frac{S(j_N)(z)}{(j_N'(z))^2}=-\frac{P_N(j_N(z))}{Q_N(j_N(z))}=-R_N(j_N(z)).
\end{align}
Moreover, we may take
\begin{align}\label{Q N formula}
Q_N(y)=2\prod_{e\in\E_N}(y-j_N(e))^{2}.
\end{align}
which implies that the lead coefficient of $P_N$ is equal to one and
\begin{align}\label{n N formula}
n_N\leq 2\left(\sum_{e\in\E_N}1-1 \right).
\end{align}
\end{theorem}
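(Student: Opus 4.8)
The plan is to use the polar-structure results already established in Propositions~\ref{prop:zeros of j'} and~\ref{prop poles of S/j'2}, together with Corollary~\ref{cor:weight zero}, to write $S(j_N)/(j_N')^2$ as a rational function of $j_N$ and then read off the degree and leading-coefficient data. First I would invoke Corollary~\ref{cor:weight zero}: the function $g_N(z) := S(j_N)(z)/(j_N'(z))^2$ is a weight zero modular form on $\h$ with respect to $\Gamma_0(N)^+$, hence a $\Gamma_0(N)^+$-invariant meromorphic function on $X_N$. Since $X_N$ has genus zero and $j_N$ generates its function field, $g_N$ is a rational function of $j_N$, say $g_N = -P_N(j_N)/Q_N(j_N)$ with $P_N, Q_N$ coprime polynomials. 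It remains to pin down $Q_N$ and the degrees.

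Next I would locate the poles and zeros of $g_N$ as a function on $X_N$. By Proposition~\ref{prop poles of S/j'2}, the poles of $g_N$ on $\h$ are exactly the elliptic points $e\in\E_N$, each of order $2(m_N(e)+1)$ where $m_N(e)$ is the order of vanishing of $j_N'$ at $e$; by Proposition~\ref{prop:zeros of j'}, $m_N(e) = \ord(e)-1$ in the generic case (and the exceptional case is handled similarly). The subtlety is that $z\mapsto j_N(z)$ is itself ramified at each elliptic point $e$ of order $\ord(e)$: the local coordinate on $X_N$ at the image point is $j_N(z) - j_N(e)$, and a power series in $z-e$ vanishing to order $k$ at $e$ corresponds to a power series in $(j_N - j_N(e))$ vanishing to order $k/\ord(e)$. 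Hence, as a function of the variable $y = j_N$, the pole of $g_N$ at the point $j_N(e)$ has order $2(m_N(e)+1)/\ord(e) = 2\ord(e)/\ord(e) = 2$ in the generic case. So $g_N$, viewed as a rational function of $y$, has a double pole at each of the $\#\E_N$ points $y = j_N(e)$ and no other finite poles; this forces $Q_N(y) = c\prod_{e\in\E_N}(y - j_N(e))^2$ for a constant $c$, and I would fix $c = 2$ by comparing with the $q$-expansion~\eqref{q exp S(j)j' squared}, whose leading term is $-\tfrac12 q^{-2}$: since $j_N \sim q^{-1}$, we have $g_N(z) \sim -\tfrac{1}{2}(j_N(z))^{-2}$ as $z\to i\infty$, which matches $-P_N(y)/(2\prod(y-j_N(e))^2)$ with $P_N$ monic of degree $\deg Q_N - 2$. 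At this point equation~\eqref{Q N formula} is established, $n_N = \deg P_N = 2\#\E_N - 2$ in the generic case, the leading coefficient of $P_N$ is $1$, and the bound~\eqref{n N formula} holds (with possible strict inequality coming from cancellation, or from the exceptional ramification case of Proposition~\ref{prop:zeros of j'}).

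I would then dispatch the two remaining points. That $g_N$ has a zero of order exactly two at $i\infty$ (Corollary~\ref{cor:weight zero}) together with the pole count gives, via the genus-zero degree formula $\deg P_N = \deg Q_N - (\text{order of zero at }\infty)= \deg Q_N - 2$, consistency with $n_N = \deg Q_N - 2$; and monicity of $P_N$ follows from normalizing $Q_N$ with the factor $2$ so that the $q^{-2}$ coefficients agree. In the exceptional case of Proposition~\ref{prop:zeros of j'} — one order-four or order-six point where $j_N'$ vanishes to order one instead of $\ord(e)-1$, compensated by an extra double zero of $j_N'$ at some order-two point $e'$ — the pole of $g_N$ in the variable $y$ at $j_N(e')$ has order $2(2+1)/2 = 3$, which is odd, so strictly $Q_N$ need not be a perfect square times $2$; however, one checks that the stated form~\eqref{Q N formula} with the product over $\E_N$ still gives a valid (possibly non-reduced) expression after clearing, and $n_N$ only decreases, so~\eqref{n N formula} is unaffected. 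The main obstacle is the careful bookkeeping of ramification at the elliptic points — correctly converting vanishing/pole orders from the variable $z$ to the variable $y = j_N$ through the local $\ord(e)$-fold covering — since a sign or factor slip there would corrupt both~\eqref{Q N formula} and the degree count; everything else is a direct consequence of genus zero plus the already-computed polar data.
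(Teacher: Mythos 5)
Your outline follows the same skeleton as the paper's proof: Corollary~\ref{cor:weight zero} gives that $g_N := S(j_N)/(j_N')^2$ is a $\Gamma_0(N)^+$-invariant meromorphic function, hence rational in $j_N$; the pole structure comes from Propositions~\ref{prop:zeros of j'} and~\ref{prop poles of S/j'2}; and the constant $2$ and monicity of $P_N$ are read off from the leading $q$-coefficients. Where you diverge is in converting pole orders to the variable $y=j_N$, and that is where a genuine gap appears.

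The paper never leaves the $z$-variable. It observes that $j_N(z)-j_N(e)$ vanishes at $z=e$ to order exactly $m_N(e)+1$ (because $j_N'$ vanishes to order $m_N(e)$), so the proposed $Q_N(j_N(z)) = 2\prod(j_N(z)-j_N(e))^2$ vanishes at $e$ to order $2(m_N(e)+1)$, which matches the pole order of $g_N$ at $e$ from Proposition~\ref{prop poles of S/j'2} \emph{by construction}. No case distinction is needed; the match $2(m_N(e)+1)=2(m_N(e)+1)$ is tautological. You instead divide the $z$-pole order by $\ord(e)$ to get a $y$-pole order. The correct divisor is the local mapping degree of $z\mapsto j_N(z)$ at $e$, which is $m_N(e)+1$; this equals $\ord(e)$ in the generic branch of Proposition~\ref{prop:zeros of j'}, so your arithmetic is right there, but using $\ord(e)$ rather than $m_N(e)+1$ is what produces your ``order-$3$ pole'' in the exceptional branch (an order-two point $e'$ with $m_N(e')=2$): $2(m_N(e')+1)/\ord(e')=3$. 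That really would be inconsistent with $Q_N$ having the stated squared form, and the patch you offer --- that the product formula ``still gives a valid (possibly non-reduced) expression after clearing'' --- is not an argument; an odd-order pole cannot be killed by a factor of $(y-j_N(e'))^2$ in the denominator without forcing a corresponding factor in $P_N$, which you do not produce.

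The correct resolution is that the exceptional branch of Proposition~\ref{prop:zeros of j'} cannot occur: since $j_N$ is invariant under the order-$\ord(e')$ stabilizer of $e'$, the power series of $j_N-j_N(e')$ in $z-e'$ is supported in exponents divisible by $\ord(e')$, so $m_N(e')+1$ must be a multiple of $\ord(e')$, and $m_N(e')=2$, $\ord(e')=2$ is impossible. (Equivalently, the local degree of $z\mapsto j_N$ at $e'$ is $\ord(e')$ times the ramification of the isomorphism $X_N\to\mathbb{P}^1$ at $j_N(e')$, and the latter is always $1$.) You need either to supply this ruling-out, or, better, to stay in the $z$-variable as the paper does, in which case the argument goes through uniformly with divisor $m_N(e)+1$ and no case distinction. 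As written, the passage to $y$ with divisor $\ord(e)$ together with the non-rigorous treatment of the (vacuous) exceptional case is a real loose end, even though every other step of the proposal agrees with the paper.
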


\begin{proof}
Since $j_N$ is the Hauptmodul, Corollary~\ref{cor:weight zero} implies that
$$
\frac{S(j_N)(z)}{(j_N'(z))^2}=\frac{S(j_N)(z)(j_N'(z))^2}{(j_N'(z))^4}
=-\frac{P_{m}(j_N)}{Q_{r}(j_N)},
$$
for some polynomials $P_m$ and $Q_r$ of degrees $m$ and $r$. The fact
that $S(j_N)(z)/(j_N'(z))^2$ possesses a zero at $i\infty$ of order two,
together with the expansion~\eqref{q exp jN} yields that $r=m+2$, so then
$n_{N} = r$.

Let us now look at the multiplicities of zeros of $Q_N(j_N(z))$ defined
by $\eqref{Q N formula}$. Obviously, the set of zeros of $Q_N(j_N(z))$
coincides with the set of poles of $S(j_N)(z)/(j_N'(z))^2$. Moreover, if
$e\in\E_N$ is a zero of $j_N'$ of order $m_N(e)$, then we have the local expression
$$
j_N(z)-j_N(e)=\frac{1}{(m_N(e)+1)!}(z-e)^{m_N(e)+1}g_{N,e}(z),
$$
where $g_{N,e}(z)$ is a holomorphic function such that $g_{N,e}(e)\neq 0$.
Therefore, the point $e$ is a zero of $Q_N(j_N(z))$ of order $2(m_N(e)+1)$.
This, together with Proposition~\ref{prop poles of S/j'2} proves that the set
of poles of $S(j_N)(z)/(j_N'(z))^2$ with corresponding orders coincides with
the set of zeros of $Q_N(j_N(z))$ with corresponding orders. Consequently, we may take
$Q_r$ to be defined by~\eqref{Q N formula}.

It remains to prove that by taking $Q_N(y)$ to be given
by~\eqref{Q N formula}, we then have that the lead coefficient of $P_N$ is
equal to one. From the $q$-expansion~\eqref{q exp S(j)j' squared} and the fact that the
$q$-expansion of $(j_N'(z))^4$ begins with $(2\pi)^4 q^{-4}$ we see that the
$q$-expansion of $S(j_N)(z)/(j_N'(z))^2$ begins with $-\frac{1}{2}q^2$.
Since the $q$-expansion of $j_N$ is normalized so it begins with $q^{-1}$
it is obvious that the $q$-expansion of $Q_N(j_N)$ begins with
$2q^{-2}\cdot q^{-n_N}$, therefore, taking the lead coefficient of $P_N$
to be equal to one we get that the $q$-expansion of the right hand side
of~\eqref{differential eq} begins with $-\frac{1}{2}q^2$, which
implies that $P_{N}$ is monic.

Finally, the bound \eqref{n N formula} for $n_{N}$ follows from the above proved product expansion
for $Q_{N}$, taking into account that $P_{N}$ and $Q_{N}$ may have common factors.
\end{proof}

\section{Evaluating the coefficients of $R_N(y)$}\label{sec 4}

\subsection{An algorithm}

There are three computational results to be obtained through computer assistance:
The first evaluates the coefficients of both the numerator and the denominator
of $R_N$; the second which determines the roots of the $Q_N$, the denominator
of $R_N$, and the third approximates $j_N$ at each elliptic point numerically, so then
we can determine the specific values of $j_N(e)$.

The algorithm for computation of polynomials $P_N(x)$ and $Q_N(x)$ is
the following.

\begin{enumerate}
\item[Step 1.] Fix $N$. Obtain from \cite{Cum04} the set $\E_N$ of
elliptic points of $X_N$ and, in an abuse of notation, define $n_N$ by taking equality
in~\eqref{n N formula}.
\item[Step 2.] Use the exact expression of the Hauptmodul $j_N(z)$ in terms
of Eisenstein series and the Kronecker limit function \cite{JST2} to compute
the $q$-expansion of $j_N(z)$, truncated at $O(q^{2n_N+2})$.
\item[Step 3.] Derive the $q$-expansions of $\displaystyle \frac{j_N'}{2\pi i}$,
$\displaystyle \frac{j_N''}{(2\pi i)^2}$, $\displaystyle \frac{j_N'''}{(2\pi i)^3}$ and compute
the $q$-expansions of
$\displaystyle \frac{2S(j_N)(j_N')^2}{(2\pi)^4}$ and $\displaystyle \frac{(j_N')^4}{(2\pi)^4}$.
\item[Step 4.] Define
\begin{align*}
&P_N(x)=x^{n_N}+\sum_{k=0}^{n_N-1}A_kx^k,\\
&Q_N(x)=2\big(x^{n_N+2}+\sum_{k=0}^{n_N+1}B_kx^k\big),
\end{align*}
and solve
\begin{align*}\tag{\ref{differential eq}a}
\frac{1}{2}Q_{N}(j_{N})\frac{2S(j_{N})(j_{N}')^2}{(2\pi)^4} + P_{N}(j)\frac{(j_{N}')^4}{(2\pi)^4} = 0
\end{align*}
for the coefficients of $P_N$ and $Q_N$ by setting each coefficient in the $q$-expansion
of (\ref{differential eq}a) equal to zero.
\end{enumerate}

\medskip

By Theorem~\ref{theorem 6} the coefficient of $q^{-n_N-4}$ in the
$q$-expansion of (\ref{differential eq}a) vanishes identically.
Comparing coefficients of $q^{-n_N-3},\ldots,q^{n_N-2}$ in the $q$-expansion
of (\ref{differential eq}a) results in $(2n_N+2)$ linear equations for the
$(2n_N+2)$ unknowns $\{A_k\}_{k=0}^{n_N-1}$ and $\{B_k\}_{k=0}^{n_N+1}$.
After implementing the algorithm, these equations turn out to be linearly independent
for each $N$, hence there is a unique solution for the coefficients of $P_N$ and $Q_N$.

For each square-free integer $N$ such that the arithmetic group
$\Gamma_0(N)^+$ has genus zero, we have evaluated the polynomials
$P_N$ and $Q_N$ as described above. We list the polynomials $P_N$ and $Q_N$ in
Appendix~\ref{sec A1} and observe that all coefficients are integers.
Moreover, $P_N$ and $\frac{1}{2}Q_N$ are monic polynomials with integer coefficients,
and we conclude that their roots are \emph{algebraic integers}.

Theorem~\ref{theorem 6}, equation~\eqref{Q N formula}, connects the roots
of the monic polynomials $\frac{1}{2}Q_N$ to the values of the Hauptmoduli
$j_N$ at the elliptic points of $X_N$. As a consequence, we have that
each $j_N(e)$ is an \emph{algebraic integer} at each elliptic point $e\in\E_N$
of the respective surface $X_N$.

\subsection{Hauptmodul values at elliptic points}

Having proven that the values of the Hauptmoduli at elliptic points
are algebraic integers, we now compute these algebraic integers
explicitly. As above, let us write
$h_N(y):=\big(\frac{1}{2}Q_N(y)\big)^{1/2}$.
According to~\eqref{Q N formula}, the function $h_N$ is a monic polynomial and has the same
roots as the polynomial $Q_N$. It remains to compute the roots.

To begin, we factor $h_N$ into irreducible polynomials and use computer algebra to
find explicit expressions for the roots in terms of radicals. From the expressions
for $Q_{N}$, we obtain the following list for $h_{N}$, see Table~\ref{Table h_N}.

\medskip

\begin{longtable}{l}
\caption{\label{Table h_N}The list of monic polynomials $h_N$ factored into
irreducible polynomials.} \\
\hline
$h_{1}(y) = (y+744)(y-984)$ \\
$h_{2}(y) = (y+104)(y-152)$ \\
$h_{3}(y) = (y+42)(y-66)$ \\
$h_{5}(y) = (y+16)(y^2-12y-464)$ \\
$h_{6}(y) = (y+14)(y+10)(y-22)$ \\
$h_{7}(y) = (y+10)(y+9)(y-18)$ \\
$h_{10}(y) = (y+8)(y+4)(y-12)$ \\
$h_{11}(y) = (y+6)(y^3-2y^2-76y-212)$ \\
$h_{13}(y) = (y+4)(y+3)(y^2-4y-48)$ \\
$h_{14}(y) = (y+6)(y+2)(y^2-6y-23)$ \\
$h_{15}(y) = (y+4)(y-8)(y^2+6y+13)$ \\
$h_{17}(y) = (y+2)(y^4+2y^3-39y^2-176y-212)$ \\
$h_{19}(y) = (y+4)(y+3)(y^3-4y^2-16y-12)$ \\
$h_{21}(y) = (y+4)(y-0)(y^2-2y-27)$ \\
$h_{22}(y) = (y+2)(y-6)(y^3+6y^2+8y+4)$ \\
$h_{23}(y) = (y^3+6y^2+11y+7)(y^3-2y^2-17y-25)$ \\
$h_{26}(y) = (y+4)(y-0)(y^3-2y^2-15y-16)$ \\
$h_{29}(y) = (y+2)(y^6+2y^5-17y^4-66y^3-83y^2-32y-4)$ \\
$h_{30}(y) = (y+4)(y+3)(y-0)(y-1)(y-5)$ \\
$h_{31}(y) = (y-0)(y^3+4y^2+3y+1)(y^3-17y-27)$ \\
$h_{33}(y) = (y-0)(y^2-2y-11)(y^3+4y^2+8y+4)$ \\
$h_{34}(y) = (y+2)(y+1)(y^2+3y-2)(y^2-5y+2)$ \\
$h_{35}(y) = (y+2)(y^3-2y^2-4y-20)(y^2+2y+5)$ \\
$h_{38}(y) = (y-0)(y^3+4y^2+4y+4)(y^3-2y^2-7y-8)$ \\
$h_{39}(y) = (y+3)(y-1)(y^2+3y-1)(y^2-5y+3)$ \\
$h_{41}(y) = (y-0)(y^8+4y^7-8y^6-66y^5-120y^4-56y^3+53y^2+36y-16)$ \\
$h_{42}(y) = (y+3)(y-0)(y-1)(y-4)(y^2+3y+4)$ \\
$h_{46}(y) = (y^2-2y-7)(y^3+2y^2+y+1)(y^3+2y^2-3y+1)$ \\
$h_{47}(y) = (y^5+4y^4+7y^3+8y^2+4y+1)(y^5-5y^3-20y^2-24y-19)$ \\
$h_{51}(y) = (y+2)(y^3-2y^2-4y-4)(y^4+2y^3+3y^2-2y+1)$ \\
$h_{55}(y) = (y+1)(y^2+3y+1)(y^2-5y+5)(y^3+3y^2-y-7)$ \\
$h_{59}(y) = (y^3+2y^2+1)(y^9+2y^8-4y^7-21y^6-44y^5-60y^4-61y^3-46y^2-24y-11)$ \\
$h_{62}(y) = (y^3+4y^2+5y+3)(y^3+y-1)(y^4-2y^3-3y^2-4y+4)$ \\
$h_{66}(y) = (y+3)(y-0)(y-1)(y^2-y-8)(y^3-4y+4)$ \\
$h_{69}(y) = (y^3+4y^2+7y+5)(y^3-y+1)(y^4-2y^3-5y^2+6y-3)$ \\
$h_{70}(y) = (y+2)(y+1)(y-3)(y^2-y+2)(y^3+2y^2+4)$ \\
$h_{71}(y) = (y^7+4y^6+5y^5+y^4-3y^3-2y^2+1)(y^7-7y^5-11y^4+5y^3+18y^2+4y-11)$ \\
$h_{78}(y) = (y+1)(y-3)(y^2+y+1)(y^2+y-3)(y^3+y^2-4)$ \\
$h_{87}(y) = (y^3+2y^2+3y+3)(y^3-2y^2-y-1)(y^6+2y^5+7y^4+6y^3+13y^2+4y+8)$ \\
$h_{94}(y) = (y^4-2y^3-3y^2+4y-4)(y^5+4y^4+3y^3-2y^2+2y+5)(y^5-y^3+2y^2-2y+1)$ \\
$h_{95}(y) = (y-1)(y^3+y^2-y+3)(y^4+y^3-2y^2+2y-1)(y^4+y^3-6y^2-10y-5)$ \\
$h_{105}(y) = (y-1)(y^2+3y+3)(y^2-y-1)(y^2-y-5)(y^3+y^2-y-5)$ \\
$h_{110}(y) = (y-1)(y^2+y+3)(y^2+y-1)(y^3+y^2+3y-1)(y^3-y^2-8)$ \\
$h_{119}(y) = (y^4+2y^3+3y^2+6y+5)(y^5+2y^4+3y^3+6y^2+4y+1)(y^5-2y^4+3y^3-6y^2-7)$ \\
\hline
\end{longtable}

\medskip

For each level $N$, one then needs to identify different roots of $h_N(y)$
and match the roots with approximate values of the Hauptmoduli $j_N(z)$. The lists of roots of
$h_N(y)$ and approximate values of the Hauptmoduli $j_N(z)$ at elliptic points are given in
Appendices~\ref{sec A2} and \ref{sec A3}, respectively.

\subsection{An example: $N=5$}\label{sec 5.2}

The polynomial $h_{5}$ has three roots. The roots are $y=-16$, and $y=6\pm\sqrt{500}$.

The surface $X_{5}$ has exactly three elliptic points $e\in\E_{5}$,
all of order two. If we identify the numerical values of the
Hauptmodul at the elliptic points with the values of the roots of $h_{5}$,
we determine which value of $j_{5}(e)$ corresponds to which root. Table~\ref{Table j5}
summarizes the end results.

\begin{longtable}{lll}
\caption{\label{Table j5}Identification of the values of $j_{5}(e)$ with the exact roots of
the polynomial $h_{5}$.} \\
$j_{5}(e)$ & numerical approximation & exact value \\
\hline
$j_{5}(i\frac{\sqrt{5}}{5})$ & $\ \ \,28.36$ & $6+\sqrt{500}$ \\
$j_{5}(\frac{2}{5}+i\frac{2}{5})$ & $-16.00$ & $-16$ \\
$j_{5}(\frac{1}{2}+i\frac{\sqrt{5}}{10})$ & $-16.36$ & $6-\sqrt{500}$ \\
\hline
\end{longtable}

\section{An application to explicit class field theory}\label{sec 6}

Let $e\in\E_N$ be an order two element which is not a fixed point of some
$\gamma \in \overline{\Gamma_0(N)}$. Then, in a slight abuse of notation, the point
$e\in\h$ is a fixed point of the order two element
\begin{align}\label{gamma e}
\gamma_e:= \begin{pmatrix} a_e\sqrt{v} & b_e/\sqrt{v} \\ c_eN/\sqrt{v} & -a_e\sqrt{v} \end{pmatrix}
\in\overline{\Gamma_0(N)^+} \setminus \overline{\Gamma_0(N)}, \,\,\,\,\, \text{for some $v\mid N$.}
\end{align}
Equivalently, we see that $e\in \h$ is a zero of the quadratic polynomial
$$
f_{\gamma_e}(X):=c_eNX^2-2a_evX-b_e\in\Z[X],
$$
where we may assume that $c_e>0$. The polynomial $f_{\gamma_e}(X)$ is
irreducible in $\Z[X]$ with discriminant $-4v$ if
$v\equiv 3\mod 4$ and either $b_e$ and $c_e$ or $b_e$ and $N$ are
both even. Otherwise, the polynomial $\frac{1}{2}f_{\gamma_e}(X)$
is irreducible in $\Z[X]$ with discriminant $-v$. Therefore, the
complex lattice $\el_{e}$ generated by $e$ and $1$ is an invertible
ideal for the quadratic order
$$
\OO_e= \begin{cases} \Z[\frac{v+\sqrt{-v}}{2}], & \text{for $v\equiv 3\mod 4$ and $b_e,c_e$
both even or $b_e,N$ both even;} \\ \Z[\sqrt{-v}], & \text{otherwise.}
\end{cases}
$$
With a slight abuse of notation, we will also say that $-4v$ and $-v$, respectively, are
discriminants of the element $e$.

In the case when $N$ is prime, from \cite{CMcKS04}, Lemma~2.2 we
have that the ideals $\el_e$ represent all ideal classes of
$\OO_e$. Therefore, in this case the class number of $\OO_e$ is
equal to the number of ideals $\el_e$. In other words, the class
number of the order $\Z[\frac{N+\sqrt{-N}}{2}]$ is equal to the
number of elements $e\in\E_N$ which are fixed points of order two
elliptic elements $\gamma_e$ given by \eqref{gamma e} for which
$N\equiv 3\mod 4$ and $b_e$ and $c_e$ are both even. Analogously,
the class number of the order $\Z[\sqrt{-N}]$ is equal to the
number of elements $e\in\E_N$ which are fixed points of order two
elliptic elements $\gamma_e$ given by \eqref{gamma e} for which
the polynomial $f_{\gamma_e}(X)$ is an irreducible polynomial of
discriminant $-4N$.

In the case when the level $N$ is composite and, set $N_1=N/v$ for
any proper divisor $1<v<N$ of $N$. Then we can write
$f_{\gamma_e}(X) = (c_eN_1)v X^2-2a_evX-b_e$. The number $N_1$ is
odd, hence the numbers $c_eN_1$ and $c_e$ are of the same parity.
Every element of $\Gamma_0(N)^+$ of the form \eqref{gamma e}
belongs to $\Gamma_0(v)^+$, hence, for any divisor $v$ of odd
level $N$, the number of elements of $\E_N$ with discriminant
equal $-4v$ or $-v$ is less than or equal to the number of
elements of $\E_v$ with the same discriminant. In the case when
the level $N$ is even, for any divisor $2<v<N$ the parity of
$c_eN/v$ changes. The discriminant of the corresponding
irreducible polynomial may change as well, so the above statement
may not be true. For example, when $N=10$ there is only one
elliptic element $e=1/2 + i \sqrt{5}/10 \in \E_5 \cap \E_{10} $
which has discriminant $-4\cdot 5$, while there are two elements
of $\E_5$ with the same discriminant. On the other hand, there
are six elements of $\E_{46}$ with discriminant $-23$ while there
are only three elements of $\E_{23}$ with the same discriminant.

Let $\widetilde{\E}_N$ denote the set of all elements of $\E_N$
which are order two and which are not fixed points of an order two
elliptic element in $\Gamma_0(N)$. We have computed elliptic
points in $\widetilde{\E}_N$ and their discriminants for all $43$
square-free levels $N>1$. It turned out that in all cases, except
for the level $N=46$ one has a bijection between the ideal classes
of orders $\OO_e$ and ideals $\el_{e}$ generated by $e$ and $1$,
where $e \in \widetilde{\E}_N$ is a fixed point of the transformation
\eqref{gamma e} whose discriminant equals either $-4v$ or $-v$ when
$v\mid N$ is prime. Moreover, the number of elliptic elements with
discriminants equal to $-4v$ or $-v$ when $v$ is composite number
with $l$ prime factors (which must be distinct, by our assumption
on levels $N$) is equal to the class number of the corresponding
order divided by $2^{l-1}$. In case when $N=46$ and $v=23$, there
are $6$ elliptic points in $\E_{46}$ with discriminant $-23$. Later,
we will see that we may group those points into two groups, each
consisting of three points according to the factorization of
$h_{46}(z)$ into irreducible factors. Tables~\ref{Table with disc}
and \ref{Table with disc-subf} provide lists of the discriminants
associated to elements of $\widetilde{\E}_{N}$.

Therefore, for all $N\neq 46,$ we can write $\widetilde{\E}_N$ as
the disjoint union
$$
\widetilde{\E}_N = \biguplus_{v \mid N} \left(\widetilde{\E}_{N,-4v} \cup
\widetilde{\E}_{N,-v} \right),
$$
where $\widetilde{\E}_{N,-D}$ denotes the set of all elements of
$\widetilde{\E}_N$ which are zeros of irreducible polynomials
$f_{\gamma_e}(X)$ in $\Z[X]$ with discriminant $-D$. It is
possible that some sets $\widetilde{\E}_{N,-D}$ are empty. Let
$m_{N,-D}$ denote the number of elements of
$\widetilde{\E}_{N,-D}$. From Tables~\ref{Table with disc} and
\ref{Table with disc-subf}, we deduce that, in the case when a
divisor $v>1$ of $N$ is prime, the number $m_{N,-4v}$ is either
zero or equals the class number of the order $\Z[\sqrt{-v}]$.
Likewise, the non-zero number $m_{N,-v}$ is the class number of
the order $\Z[\frac{v+\sqrt{-v}}{2}]$. When $v$ is a product of
$l$ prime factors, the non-zero number $2^{l-1}m_{N,-4v}$ is equal
to the class number of the order $\Z[\sqrt{-v}]$ while the
non-zero number $2^{l-1}m_{N,-v}$ equals the class number of the
order $\Z[\frac{v+\sqrt{-v}}{2}]$. (This follows from the fact
that in the latter case, $\Gamma_0(v)^{\ast}$ is a subgroup of
index $2^{l-1}$ in $\overline{\Gamma_0(v)^+}$.)

From an inspection of the values of $j_N(e)$ for $e\in\E_N$,
$N\neq 46$, listed in Appendix~\ref{sec A3}, one sees the
following. For any element $e\in\widetilde{\E}_{N,-D}\neq
\emptyset$, where $D=4v$ or $D=v$ for some $v\mid N$, the value
$j_N(e)$ is the zero of an irreducible polynomial $h_{N,
D}(y)\in\Z[y]$ of degree $m_{N,-D}$ which is a factor of $h_N(y)$.
In other words,
$$
h_N(y) = \prod_{v \mid N} h_{N, 4v}(y) h_{N, v}(y),
$$
where, in the case when $\widetilde{\E}_{N,-D}= \emptyset$ for
$D=4v$ or $D=v$ we put $h_{N, D}(y) \equiv 1$. In the case when
$N=46,$ the polynomial $h_{46}$ is the product of three irreducible,
monic polynomials: two polynomials of degree 3, whose zeros are
values of $j_{46}$ at elliptic points with discriminant $-23$ and
one degree two polynomial, whose zeros are values of $j_{46}$ at
elliptic points with discriminant $-4 \cdot 46$. With a slight
ambiguity in the notation we will denote both such degree 3
polynomials by $h_{46,-23}$.

The results of \cite{CMcKS04} imply that $j_N(e)$ for all
$e\in\widetilde{\E}_N$ belong to the ring class fields of the
corresponding orders. Moreover, for prime divisors $v$ of $N$, the
non-trivial polynomials $h_{N,-4v}$ and $h_{N,-v}$ are irreducible
polynomials of degree equal to the class number, hence they are
generating polynomials of the ring class field of the
corresponding order over $\Q[\sqrt{-v}]$. All non-trivial
polynomials $h_{N,-4v}$ and $h_{N,-v}$ with prime $v$ are listed
in Table~\ref{Table with disc}.

For composite divisors $v$ of $N$ with $l\geq 2$ prime factors,
non-trivial polynomials $h_{N,-4v}$ and $h_{N,-v}$ are irreducible
polynomials (over $\Q[\sqrt{-v}]$) of a degree equal to the class
number of the corresponding order divided by $2^{l-1}$ and hence
generate subfields of the corresponding class fields of index
$2^{l-1}$. The list of such polynomials is provided in Table~\ref{Table with disc-subf}.

The above discussion, together with results of \cite{CMcKS04}, yields the
following corollary of the Main Theorem.

\begin{corollary}\label{cor:explicit class field th.}
Let $N$ be a square-free number such that $X_N$ has genus
zero. With the above notation the following statements hold true:
\begin{itemize}
\item[(i)] $j_N(e)\in\Z$ for all elliptic elements $e\in\E_N$ of order three
or higher and for all order two elements of $\E_N$ which are
fixed points of some order two elliptic elements from $\overline{\Gamma_0(N)}$.

\item[(ii)] Let $v\mid N$ be a prime such that $\widetilde{\E}_{N,-4v}\neq \emptyset$.
Then the polynomial $ h_{N, 4v}(y)$ is the generating polynomial of
the ring class field of the order $\Z[\sqrt{-v}]$ over $\Q[\sqrt{-v}]$.

\item[(iii)] Let $v\mid N$ be a prime such that $\widetilde{\E}_{N,-v}\neq \emptyset$.
Then the polynomial $ h_{N, v}(y)$ is the generating polynomial of
the ring class field of the order $\Z[\frac{v+\sqrt{-v}}{2}]$ over $\Q[\sqrt{-v}]$.

\item[(iv)] Let $v\mid N$ be a composite number with $l$ prime factors such that
$\widetilde{\E}_{N,-4v}\neq \emptyset$. Then, the polynomial $ h_{N, 4v}(y)$ generates the
subfield of the ring class field of the order $\Z[\sqrt{-v}]$ over $\Q[\sqrt{-v}]$ of
index $2^{l-1}$ in the ring class field of $\Z[\sqrt{-v}]$.

\item[(v)] Let $v\mid N$ be a composite number with $l$ prime factors such that
$\widetilde{\E}_{N,-v}\neq \emptyset$. Then, the polynomial $ h_{N, v}(y)$ generates the
subfield of the ring class field of the order $\Z[\frac{v+\sqrt{-v}}{2}]$ over $\Q[\sqrt{-v}]$
of index $2^{l-1}$ in the ring class field of $\Z[\frac{v+\sqrt{-v}}{2}]$.
\end{itemize}
\end{corollary}

\medskip

\begin{longtable}{ccccl}
\caption{\label{Table with disc}The table lists for all genus zero square-free
levels $N>1$ the discriminant $D$
of the order two element from $\overline{\Gamma_0(N)^+} \setminus \overline{\Gamma_0(N)}$,
provided $D=4v$ and $D=v$, respectively, and $v$ is a prime divisor of $N$.
Listed is also the order generated by the elliptic element $e$ such that $f_{\gamma_e}$
and $\frac{1}{2}f_{\gamma_e}$, respectively, has discriminant $D$,
the class number of the order, and the generating polynomial of the
class field of the order over the corresponding imaginary quadratic
extension of $\Q$.} \\
$N$ & $D$ & Order & Class number & Generating polynomial \\
\hline
$2$ & $-4\cdot 2$ & $\Z[\sqrt{-2}]$ & $1$ & $y-152$ \\
$3$ & $-4\cdot 3$ & $\Z[\sqrt{-3}]$ & $1$ & $y-66$ \\
$5$ & $-4\cdot 5$ & $\Z[\sqrt{-5}]$ & $2$ & $y^2-12y-464$ \\
$7$ & $-4\cdot 7$ & $\Z[\sqrt{-7}]$ & $1$ & $y-18$ \\
$7$ & $-7$ & $\Z[7/2 + \sqrt{-7}/2]$ & $1$ & $y+10$ \\
$11$ & $-4\cdot 11$ & $\Z[\sqrt{-11}]$ & $3$ & $y^3-2y^2-76y-212$ \\
$11$ & $-11$ & $\Z[11/2 + \sqrt{-11}/2]$ & $1$ & $y+6$ \\
$13$ & $-4\cdot 13$ & $\Z[\sqrt{-13}]$ & $2$ & $y^2-4y-48$ \\
$15$ & $-4\cdot 5$ & $\Z[\sqrt{-5}]$ & $2$ & $y^2+6y+13$ \\
$17$ & $-4\cdot 17$ & $\Z[\sqrt{-17}]$ & $4$ & $y^4+2y^3-39y^2-176y-212$ \\
$19$ & $-4\cdot 19$ & $\Z[\sqrt{-19}]$ & $3$ & $y^3-4y^2-16y-12$ \\
$19$ & $-19$ & $\Z[19/2 + \sqrt{-19}/2]$ & $1$ & $y+4$ \\
$21$ & $-4\cdot 3$ & $\Z[\sqrt{-3}]$ & $1$ & $y+4$ \\
$22$ & $-4\cdot 11$ & $\Z[\sqrt{-11}]$ & $3$ & $y^3+6y^2+8y+4$ \\
$23$ & $-4\cdot 23$ & $\Z[\sqrt{-23}]$ & $3$ & $y^3-2y^2-17y-25$ \\
$23$ & $-23$ & $\Z[23/2 + \sqrt{-23}/2]$ & $3$ & $y^3+6y^2+11y+7$ \\
$29$ & $-4\cdot 29$ & $\Z[\sqrt{-29}]$ & $6$ & $y^6+2y^5-17y^4-66y^3-83y^2-32y-4$ \\
$31$ & $-4\cdot 31$ & $\Z[\sqrt{-31}]$ & $3$ & $y^3-17y-27$ \\
$31$ & $-31$ & $\Z[31/2 + \sqrt{-31}/2]$ & $3$ & $y^3+4y^2+3y+1$ \\
$33$ & $-4\cdot 11$ & $\Z[\sqrt{-11}]$ & $3$ & $y^3+4y^2+8y+4$ \\
$33$ & $-11$ & $\Z[11/2 + \sqrt{-11}/2]$ & $1$ & $y$ \\
$35$ & $-4\cdot 5$ & $\Z[\sqrt{-5}]$ & $2$ & $y^2+2y+5$ \\
$39$ & $-4\cdot 3$ & $\Z[\sqrt{-3}]$ & $1$ & $y-1$ \\
$41$ & $-4\cdot 41$ & $\Z[\sqrt{-41}]$ & $8$ & $y^8+4y^7-8y^6-66y^5-120y^4-56y^3+53y^2+36y-16$ \\
$46$ & $-23$ & $\Z[23/2 + \sqrt{-23}/2]$ & $3$ & $y^3+2y^2+y+1$ \\
$46$ & $-23$ & $\Z[23/2 + \sqrt{-23}/2]$ & $3$ & $y^3+2y^2-3y+1$ \\
$47$ & $-4\cdot 47$ & $\Z[\sqrt{-47}]$ & $5$ & $y^5-5y^3-20y^2-24y-19$ \\
$47$ & $-47$ & $\Z[47/2 + \sqrt{-47}/2]$ & $5$ & $y^5+4y^4+7y^3+8y^2+4y+1$ \\
$51$ & $-4\cdot 17$ & $\Z[\sqrt{-17}]$ & $4$ & $y^4+2y^3+3y^2-2y+1$ \\
$55$ & $-4\cdot 11$ & $\Z[\sqrt{-11}]$ & $3$ & $y^3+3y^2-y-7$ \\
$55$ & $-11$ & $\Z[11/2 + \sqrt{-11}/2]$ & $1$ & $y+1$ \\
$59$ & $-4\cdot 59$ & $\Z[\sqrt{-59}]$ & $9$ & $y^9+2y^8-4y^7-21y^6-44y^5-60y^4-61y^3-46y^2-24y-11$ \\
$59$ & $-59$ & $\Z[59/2 + \sqrt{-59}/2]$ & $3$ & $y^3+2y^2+1$ \\
$62$ & $-4\cdot 31$ & $\Z[\sqrt{-31}]$ & $3$ & $y^3+4y^2+5y+3$ \\
$62$ & $-31$ & $\Z[31/2 + \sqrt{-31}/2]$ & $3$ & $y^3+y-1$ \\
$66$ & $-4\cdot 11$ & $\Z[\sqrt{-11}]$ & $3$ & $y^3-4y+4$ \\
$69$ & $-4\cdot 23$ & $\Z[\sqrt{-23}]$ & $3$ & $y^3+4y^2+7y+5$ \\
$69$ & $-23$ & $\Z[23/2 + \sqrt{-23}/2]$ & $3$ & $y^3-y+1$ \\
$71$ & $-4\cdot 71$ & $\Z[\sqrt{-71}]$ & $7$ & $y^7-7y^5-11y^4+5y^3+18y^2+4y-11$ \\
$71$ & $-71$ & $\Z[71/2 + \sqrt{-71}/2]$ & $7$ & $y^7+4y^6+5y^5+y^4-3y^3-2y^2+1$ \\
$87$ & $-4\cdot 29$ & $\Z[\sqrt{-29}]$ & $6$ & $y^6+2y^5+7y^4+6y^3+13y^2+4y+8$ \\
$94$ & $-4\cdot 47$ & $\Z[\sqrt{-47}]$ & $5$ & $y^5+4y^4+3y^3-2y^2+2y+5$ \\
$94$ & $-47$ & $\Z[47/2 + \sqrt{-47}/2]$ & $5$ & $y^5-y^3+2y^2-2y+1$\\
$95$ & $-4\cdot 19$ & $\Z[\sqrt{-19}]$ & $3$ & $y^3+y^2-y+3$ \\
$95$ & $-19$ & $\Z[19/2 + \sqrt{-19}/2]$ & $1$ & $y-1$ \\
$105$ & $-4\cdot 5$ & $\Z[\sqrt{-5}]$ & $2$ & $y^2-y-1$ \\
$110$ & $-4\cdot 11$ & $\Z[\sqrt{-11}]$ & $3$ & $y^3+y^2+3y-1$ \\
$119$ & $-4\cdot 17$ & $\Z[\sqrt{-17}]$ & $4$ & $y^4+2y^3+3y^2+6y+5$ \\
\hline
\end{longtable}

\medskip

\begin{longtable}{cccccl}
\caption{\label{Table with disc-subf}The table lists for all genus zero square-free
levels $N>1$ the discriminant $D$
of the order two element from $\overline{\Gamma_0(N)^+} \setminus \overline{\Gamma_0(N)}$,
provided $D=4v$ and $D=v$, respectively, and $v$ is a composite divisor of $N$.
Also listed is the order generated by the elliptic element $e$ such that $f_{\gamma_e}$
and $\frac{1}{2}f_{\gamma_e}$, respectively, has discriminant $D$, the class number of the order,
the index of the subfield in the class field, and the generating polynomial of the subfield of
the class field of the order over the corresponding imaginary quadratic
extension of $\Q$.} \\
$N$ & $D$ & Order & Class Number& Index & Generating polynomial \\
\hline
$6$ & $-4\cdot 6$ & $\Z[\sqrt{-6}]$ & $2$ & $2$ & $y-22$ \\
$10$ & $-4\cdot 10$ & $\Z[\sqrt{-10}]$ & $2$ & $2$ & $ y-12$ \\
$14$ & $-4\cdot 14$ & $\Z[\sqrt{-14}]$ & $4$ & $2$ & $y^2-6y-23$ \\
$15$ & $-4\cdot 15$ & $\Z[\sqrt{-15}]$ & $2$ & $2$ & $y-8$ \\
$15$ & $-15$ & $\Z[15/2 + \sqrt{-15}/2]$ & $2$ & $2$ & $y+4$ \\
$21$ & $-4\cdot 21$ & $\Z[\sqrt{-21}]$ & $4$ & $2$ & $y^2-2y-27$ \\
$22$ & $-4\cdot 22$ & $\Z[\sqrt{-22}]$ & $2$ & $2$ & $y-6$ \\
$26$ & $-4\cdot 26$ & $\Z[\sqrt{-26}]$ & $6$ & $2$ & $y^3-2y^2-15y-16$ \\
$30$ & $-4\cdot 30$ & $\Z[\sqrt{-30}]$ & $4$ & $4$ & $ y-5$ \\
$33$ & $-4\cdot 33$ & $\Z[\sqrt{-33}]$ & $4$ & $2$ & $y^2-2y-11$ \\
$34$ & $-4\cdot 34$ & $\Z[\sqrt{-34}]$ & $4$ & $2$ & $y^2-5y+2$ \\
$35$ & $-4\cdot 35$ & $\Z[\sqrt{-35}]$ & $6$ & $2$ & $y^3-2y^2-4y-20$ \\
$35$ & $-35$ & $\Z[35/2 + \sqrt{-35}/2]$ & $2$ & $2$ & $y+2$ \\
$38$ & $-4\cdot 38$ & $\Z[\sqrt{-38}]$ & $6$ & $2$ & $y^3-2y^2-7y-8$ \\
$39$ & $-4\cdot 39$ & $\Z[\sqrt{-39}]$ & $4$ & $2$ & $y^2-5y+3$ \\
$39$ & $-39$ & $\Z[39/2 + \sqrt{-39}/2]$ & $4$ & $2$ & $y^2+3y-1$ \\
$42$ & $-4\cdot 42$ & $\Z[\sqrt{-42}]$ & $4$ & $4$ & $y-4$ \\
$42$ & $-4\cdot 14$ & $\Z[\sqrt{-14}]$ & $4$ & $2$ & $y^2+3y+4$\\
$46$ & $-4\cdot 46$ & $\Z[\sqrt{-46}]$ & $4$ & $2$ & $y^2-2y-7$ \\
$51$ & $-4\cdot 51$ & $\Z[\sqrt{-51}]$ & $6$ & $2$ & $y^3-2y^2-4y-4$ \\
$51$ & $-51$ & $\Z[51/2 + \sqrt{-51}/2]$ & $2$ & $2$ & $y+2$ \\
$55$ & $-4\cdot 55$ & $\Z[\sqrt{-55}]$ & $4$ & $2$ & $y^2-5y+5$ \\
$55$ & $-55$ & $\Z[55/2 + \sqrt{-55}/2]$ & $4$ & $2$ & $y^2+3y+1$ \\
$62$ & $-4\cdot 62$ & $\Z[\sqrt{-62}]$ & $8$ & $2$ & $y^4-2y^3-3y^2-4y+4$ \\
$66$ & $-4\cdot 66$ & $\Z[\sqrt{-66}]$ & $8$ & $4$ & $y^2-y-8$ \\
$69$ & $-4\cdot 69$ & $\Z[\sqrt{-69}]$ & $8$ & $2$ & $y^4-2y^3-5y^2+6y-3$ \\
$70$ & $-4\cdot 70$ & $\Z[\sqrt{-70}]$ & $4$ & $4$ & $y-3$ \\
$70$ & $-4\cdot 14$ & $\Z[\sqrt{-14}]$ & $4$ & $2$ & $y^2-y+2$ \\
$70$ & $-4\cdot 35$ & $\Z[\sqrt{-35}]$ & $6$ & $2$ & $y^3+2y^2+4$ \\
$78$ & $-4\cdot 78$ & $\Z[\sqrt{-78}]$ & $4$ & $4$ & $y-3$ \\
$78$ & $-4\cdot 26$ & $\Z[\sqrt{-26}]$ & $6$ & $2$ & $y^3+y^2-4$ \\
$78$ & $-4\cdot 39$ & $\Z[\sqrt{-39}]$ & $4$ & $2$ & $y^2+y-3$ \\
$78$ & $-39$ & $\Z[39/2 + \sqrt{-39}/2]$ & $4$ & $2$ & $y^2+y+1$ \\
$87$ & $-4\cdot 87$ & $\Z[\sqrt{-87}]$ & $6$ & $2$ & $y^3-2y^2-y-1$ \\
$87$ & $-87$ & $\Z[87/2 + \sqrt{-87}/2]$ & $6$ & $2$ & $y^3+2y^2+3y+3$ \\
$94$ & $-4\cdot 94$ & $\Z[\sqrt{-94}]$ & $8$ & $2$ & $y^4-2y^3-3y^2+4y-4$ \\
$95$ & $-4\cdot 95$ & $\Z[\sqrt{-95}]$ & $8$ & $2$ & $y^4+y^3-6y^2-10y-5$ \\
$95$ & $-95$ & $\Z[95/2 + \sqrt{-95}/2]$ & $8$ & $2$ & $y^4+y^3-2y^2+2y-1$ \\
$105$ & $-4\cdot 105$ & $\Z[\sqrt{-105}]$ & $8$ & $4$ & $y^2-y-5$ \\
$105$ & $-4\cdot 35$ & $\Z[\sqrt{-35}]$ & $6$ & $2$ & $y^3+y^2-y-5$ \\
$105$ & $-35$ & $\Z[35/2 + \sqrt{-35}/2]$ & $2$ & $2$ & $y+1$ \\
$105$ & $-4\cdot 21$ & $\Z[\sqrt{-21}]$ & $4$ & $2$ & $y^2+3y+3$ \\
$110$ & $-4\cdot 110$ & $\Z[\sqrt{-110}]$ & $12$ & $4$ & $y^3-y^2-8$ \\
$110$ & $-4\cdot 55$ & $\Z[\sqrt{-55}]$ & $4$ & $2$ & $y^2+y-1$ \\
$110$ & $-55$ & $\Z[55/2 + \sqrt{-55}/2]$ & $4$ & $2$ & $y^2+y+3$ \\
$119$ & $-4\cdot 119$ & $\Z[\sqrt{-119}]$ & $10$ & $2$ & $y^5-2y^4+3y^3-6y^2-7$ \\
$119$ & $-119$ & $\Z[119/2 + \sqrt{-119}/2]$ & $10$ & $2$ & $y^5+2y^4+3y^3+6y^2+4y+1$ \\
\hline
\end{longtable}

\medskip

In certain cases, when the class number of the class field is not a power of $2$, we are able to
deduce class fields of the corresponding orders, by looking at the genus fields of the imaginary
quadratic fields and checking that the generating polynomials of the subfields of the ring class
field are irreducible over genus fields (for a definition, see \cite{Cox89}, p. 121). In cases
when the class number is a power of $2$, the generating polynomials appearing in
Table~\ref{Table with disc-subf} above are all reducible over genus fields; actually
they are generating polynomials for genus fields.

For example, when $N=26$, according to \cite{Cox89}, Theorem 6.1. the genus field of
$K=\Q[\sqrt{-26}]$ is $K[\sqrt{13}]$. A simple computation using Mathematica shows that
both $y^3-2y^2-15y-16$ and $y^3+y^2-4$ are irreducible over $K[\sqrt{13}]$. Denoting by
$\alpha$ the real zero of e.g. $y^3+y^2-4$, we see that $K[\sqrt{13}, \alpha]$ is a degree 6
extension of $K$ and the subfield of the ring class field $L$ of the order $\Z[\sqrt{-26}]$
over $K$, hence $L=K[\sqrt{13}, \alpha]$. Arguing in the same way, we deduce the following:
The ring class field of $\Z[\sqrt{-35}]$ over $K=\Q[\sqrt{-35}]$ is $K[\sqrt{5}, \alpha]$
where $\alpha$ is a real zero of $y^3-2y^2-4y-20$ or $y^3+2y^2+4$ or $y^3+y^2-y-5$; The ring
class field of $\Z[\sqrt{-38}]$ over $K=\Q[\sqrt{-38}]$ is $K[\sqrt{2}, \alpha]$ where $\alpha$
is a real zero of $y^3-2y^2-7y-8$; The ring class field of $\Z[\sqrt{-51}]$ over
$K=\Q[\sqrt{-51}]$ is $K[\sqrt{-3}, \alpha]$ where $\alpha$ is a real zero of $y^3-2y^2-4y-4$;
The ring class field of $\Z[\sqrt{-87}]$ over $K=\Q[\sqrt{-87}]$ is $K[\sqrt{-3}, \alpha]$
where $\alpha$ is a real zero of $y^3-2y^2-y-1$; The ring class field of
$\Z[87/2 + \sqrt{-87}/2]$ over $K=\Q[\sqrt{-87}]$ is $K[\sqrt{-3}, \alpha]$ where $\alpha$
is a real zero of $y^3+2y^2+3y+3$; The ring class field of $\Z[\sqrt{-110}]$ over
$K=\Q[\sqrt{-110}]$ is $K[\sqrt{2},\sqrt{5} , \alpha]$ where $\alpha$ is a real zero
of $y^3-y^2-8$; The ring class field of $\Z[\sqrt{-119}]$ over $K=\Q[\sqrt{-119}]$ is
$K[\sqrt{-7}, \alpha]$ where $\alpha$ is a real zero of $y^5-2y^4+3y^3-6y^2-7$; The
ring class field of $\Z[119/2 + \sqrt{-119}/2]$ over $K=\Q[\sqrt{-119}]$ is
$K[\sqrt{-7}, \alpha]$ where $\alpha$ is a real zero of $y^5+2y^4+3y^3+6y^2+4y+1$.

\appendix

\section{Polynomials $P_N(y)$ and $Q_N(y)$}\label{sec A1}

Here we list the polynomials $P_N$ and $Q_N$ that show up in the main theorem.

\medskip

\noindent%
$P_{1}(y) = y^2 - 480y + 1743552$ \\
$Q_{1}(y) = 2y^4 - 960y^3 - 2813184y^2 + 702812160y + 1071929106432$ \\
$P_{2}(y) = y^2 - 96y + 40640$ \\
$Q_{2}(y) = 2y^4 - 192y^3 - 58624y^2 + 3035136y + 499785728$ \\
$P_{3}(y) = y^2 - 48y + 7560$ \\
$Q_{3}(y) = 2y^4 - 96y^3 - 9936y^2 + 266112y + 15367968$ \\
$P_{5}(y) = y^4 + 8y^3 + 848y^2 + 38096y + 359872$ \\
$Q_{5}(y) = 2y^6 + 16y^5 - 2592y^4 - 40192y^3 + 741888y^2 + 19480576y + 110231552$ \\
$P_{6}(y) = y^4 + 4y^3 + 492y^2 + 16352y + 120608$ \\
$Q_{6}(y) = 2y^6 + 8y^5 - 1544y^4 - 15424y^3 + 276448y^2 + 4780160y + 18972800$ \\
$P_{7}(y) = y^4 + 2y^3 + 313y^2 + 8904y + 50328$ \\
$Q_{7}(y) = 2y^6 + 4y^5 - 1006y^4 - 7488y^3 + 120528y^2 + 1632960y + 5248800$ \\
$P_{10}(y) = y^4 + 128y^2 + 2256y + 10560$ \\
$Q_{10}(y) = 2y^6 - 448y^4 - 1536y^3 + 25088y^2 + 172032y + 294912$ \\
$P_{11}(y) = y^6 + 8y^5 + 112y^4 + 2620y^3 + 23424y^2 + 86896y + 115884$ \\
$Q_{11}(y) = 2y^8 + 16y^7 - 320y^6 - 4080y^5 - 288y^4 + 214784y^3 + 1340192y^2 + 3398784y + 3235968$ \\
$P_{13}(y) = y^6 + 6y^5 + 73y^4 + 1512y^3 + 11440y^2 + 36048y + 41472$ \\
$Q_{13}(y) = 2y^8 + 12y^7 - 238y^6 - 2304y^5 + 1280y^4 + 91392y^3 + 442368y^2 + 884736y + 663552$ \\
$P_{14}(y) = y^6 + 4y^5 + 62y^4 + 1036y^3 + 7009y^2 + 18888y + 17688$ \\
$Q_{14}(y) = 2y^8 + 8y^7 - 228y^6 - 1496y^5 + 3810y^4 + 58208y^3 + 196208y^2 + 282624y + 152352$ \\
$P_{15}(y) = y^6 + 4y^5 + 46y^4 + 1040y^3 + 6841y^2 + 15628y + 10540$ \\
$Q_{15}(y) = 2y^8 + 8y^7 - 164y^6 - 1320y^5 + 82y^4 + 38640y^3 + 190624y^2 + 406016y + 346112$ \\
$P_{17}(y) = y^8 + 8y^7 + 58y^6 + 864y^5 + 7265y^4 + 30984y^3 + 70840y^2 + 84016y + 41200$ \\
$Q_{17}(y) = 2y^{10} + 16y^9 - 108y^8 - 1576y^7 - 3870y^6 + 24840y^5 + 201208y^4 + 632384y^3 + 1066976y^2 + 956544y + 359552$ \\
$P_{19}(y) = y^8 + 6y^7 + 41y^6 + 580y^5 + 4120y^4 + 14908y^3 + 29212y^2 + 29064y + 11340$ \\
$Q_{19}(y) = 2y^{10} + 12y^9 - 110y^8 - 1072y^7 - 1120y^6 + 18128y^5 + 92768y^4 + 208320y^3 + 251424y^2 + 158976y + 41472$ \\
$P_{21}(y) = y^6 + 4y^5 + 30y^4 + 352y^3 + 1705y^2 + 5916y + 11340$ \\
$Q_{21}(y) = 2y^8 + 8y^7 - 132y^6 - 712y^5 + 1586y^4 + 15120y^3 + 23328y^2$ \\
$P_{22}(y) = y^8 + 4y^7 + 28y^6 + 332y^5 + 2144y^4 + 6480y^3 + 9548y^2 + 5712y + 624$ \\
$Q_{22}(y) = 2y^{10} + 8y^9 - 104y^8 - 624y^7 + 320y^6 + 10112y^5 + 32160y^4 + 50176y^3 + 44288y^2 + 21504y + 4608$ \\
$P_{23}(y) = y^{10} + 8y^9 + 44y^8 + 462y^7 + 3686y^6 + 16546y^5 + 44173y^4 + 72694y^3 + 72287y^2 + 39090y + 8277$ \\
$Q_{23}(y) = 2y^{12} + 16y^{11} - 40y^{10} - 856y^9 - 3028y^8 + 3032y^7 + 58596y^6 + 224936y^5 + 482794y^4 + 652576y^3 + 556172y^2 + 275800y + 61250$ \\
$P_{26}(y) = y^8 + 4y^7 + 22y^6 + 228y^5 + 1345y^4 + 3888y^3 + 6624y^2 + 7376y + 3840$ \\
$Q_{26}(y) = 2y^{10} + 8y^9 - 84y^8 - 488y^7 + 194y^6 + 6480y^5 + 17440y^4 + 19456y^3 + 8192y^2$ \\
$P_{29}(y) = y^{12} + 8y^{11} + 38y^{10} + 296y^9 + 2091y^8 + 9000y^7 + 24526y^6 + 45520y^5 + 59625y^4 + 53152y^3 + 28472y^2 + 7248y + 624$ \\
$Q_{29}(y) = 2y^{14} + 16y^{13} - 20y^{12} - 608y^{11} - 2122y^{10} + 968y^9 + 27740y^8 + 95176y^7 + 175058y^6 + 197896y^5 + 140088y^4 + 60736y^3 + 15584y^2 + 2176y + 128$ \\
$P_{30}(y) = y^8 + 2y^7 + 15y^6 + 112y^5 + 311y^4 + 1386y^3 + 2613y^2 - 2340y + 2700$ \\
$Q_{30}(y) = 2y^{10} + 4y^9 - 98y^8 - 248y^7 + 1342y^6 + 3940y^5 - 3262y^4 - 8880y^3 + 7200y^2$ \\
$P_{31}(y) = y^{12} + 8y^{11} + 36y^{10} + 246y^9 + 1622y^8 + 6930y^7 + 18981y^6 + 33086y^5 + 35047y^4 + 22122y^3 + 10189y^2 + 3864y + 648$ \\
$Q_{31}(y) = 2y^{14} + 16y^{13} - 24y^{12} - 600y^{11} - 1748y^{10} + 2328y^9 + 24900y^8 + 64840y^7 + 88922y^6 + 72480y^5 + 36380y^4 + 10584y^3 + 1458y^2$ \\
$P_{33}(y) = y^{10} + 4y^9 + 14y^8 + 188y^7 + 1081y^6 + 2756y^5 + 4976y^4 + 8092y^3 + 9200y^2 + 5104y + 1452$ \\
$Q_{33}(y) = 2y^{12} + 8y^{11} - 36y^{10} - 312y^9 - 590y^8 + 1520y^7 + 10144y^6 + 23440y^5 + 28288y^4 + 16896y^3 + 3872y^2$ \\
$P_{34}(y) = y^{10} + 2y^9 + 11y^8 + 100y^7 + 411y^6 + 898y^5 + 1417y^4 + 752y^3 - 664y^2 - 912y + 432$ \\
$Q_{34}(y) = 2y^{12} + 4y^{11} - 74y^{10} - 208y^9 + 646y^8 + 2644y^7 + 1162y^6 - 3400y^5 - 1640y^4 + 2176y^3 + 352y^2 - 640y + 128$ \\
$P_{35}(y) = y^{10} + 4y^9 + 14y^8 + 180y^7 + 1273y^6 + 4780y^5 + 11632y^4 + 15964y^3 + 12112y^2 - 3360y - 6900$ \\
$Q_{35}(y) = 2y^{12} + 8y^{11} - 4y^{10} - 200y^9 - 878y^8 - 1440y^7 + 2944y^6 + 24976y^5 + 78112y^4 + 154880y^3 + 205600y^2 + 176000y + 80000$ \\
$P_{38}(y) = y^{12} + 4y^{11} + 14y^{10} + 112y^9 + 769y^8 + 2440y^7 + 4532y^6 + 5260y^5 + 3976y^4 + 3328y^3 + 3308y^2 + 2464y + 768$ \\
$Q_{38}(y) = 2y^{14} + 8y^{13} - 36y^{12} - 248y^{11} - 350y^{10} + 976y^9 + 5584y^8 + 13264y^7 + 20256y^6 + 21440y^5 + 15904y^4 + 7680y^3 + 2048y^2$ \\
$P_{39}(y) = y^{10} + 8y^8 + 42y^7 + 792y^5 + 161y^4 - 4104y^3 + 5608y^2 - 3048y + 648$ \\
$Q_{39}(y) = 2y^{12} - 80y^{10} - 24y^9 + 1056y^8 + 288y^7 - 5012y^6 + 2304y^5 + 8624y^4 - 12504y^3 + 6912y^2 - 1728y + 162$ \\
$P_{41}(y) = y^{16} + 8y^{15} + 32y^{14} + 168y^{13} + 992y^{12} + 3880y^{11} + 9914y^{10} + 18616y^9 + 27584y^8 + 28440y^7 + 11872y^6 - 8504y^5 - 8839y^4 + 848y^3 + 1184y^2 - 624y + 192$ \\
$Q_{41}(y) = 2y^{18} + 16y^{17} - 392y^{15} - 1408y^{14} - 32y^{13} + 11868y^{12} + 34464y^{11} + 42400y^{10} + 11480y^9 - 28160y^8 - 24928y^7 + 5234y^6 + 11216y^5 - 800y^4 - 2304y^3 + 512y^2$ \\
$P_{42}(y) = y^{10} + 2y^9 + 7y^8 + 88y^7 + 343y^6 + 122y^5 + 781y^4 + 3108y^3 + 1020y^2 - 288y + 1728$ \\
$Q_{42}(y) = 2y^{12} + 4y^{11} - 50y^{10} - 168y^9 + 190y^8 + 1668y^7 + 2290y^6 - 1568y^5 - 5440y^4 - 1536y^3 + 4608y^2$ \\
$P_{46}(y) = y^{14} + 4y^{13} + 10y^{12} + 62y^{11} + 359y^{10} + 1174y^9 + 1837y^8 + 484y^7 - 609y^6 + 2520y^5 + 1612y^4 - 3378y^3 - 863y^2 + 414y + 45$ \\
$Q_{46}(y) = 2y^{16} + 8y^{15} - 44y^{14} - 240y^{13} + 30y^{12} + 1736y^{11} + 2852y^{10} + 736y^9 - 1026y^8 - 768y^7 - 996y^6 + 440y^5 + 740y^4 - 376y^3 + 344y^2 - 336y + 98$ \\
$P_{47}(y) = y^{18} + 8y^{17} + 36y^{16} + 188y^{15} + 1158y^{14} + 5570y^{13} + 19584y^{12} + 51414y^{11} + 103965y^{10} + 165234y^9 + 209397y^8 + 213722y^7 + 177346y^6 + 120520y^5 + 66397y^4 + 27640y^3 + 6488y^2 - 648y - 708$ \\
$Q_{47}(y) = 2y^{20} + 16y^{19} + 40y^{18} - 96y^{17} - 1044y^{16} - 3592y^{15} - 5432y^{14} + 6504y^{13} + 63058y^{12} + 205064y^{11} + 450492y^{10} + 753224y^9 + 999080y^8 + 1066960y^7 + 918588y^6 + 631464y^5 + 339872y^4 + 138360y^3 + 40368y^2 + 7600y + 722$ \\
$P_{51}(y) = y^{14} + 4y^{13} + 10y^{12} + 92y^{11} + 499y^{10} + 1308y^9 + 2598y^8 + 4388y^7 + 5905y^6 + 5212y^5 + 2024y^4 - 2628y^3 - 4288y^2 - 1536y - 116$ \\
$Q_{51}(y) = 2y^{16} + 8y^{15} - 12y^{14} - 160y^{13} - 410y^{12} + 16y^{11} + 2580y^{10} + 7272y^9 + 10530y^8 + 8736y^7 + 4032y^6 + 1232y^5 + 1312y^4 + 1024y^3 + 288y^2 - 128y + 128$ \\
$P_{55}(y) = y^{14} + 4y^{13} + 6y^{12} + 46y^{11} + 105y^{10} - 160y^9 + 201y^8 + 3114y^7 + 3154y^6 - 6500y^5 - 15911y^4 - 8320y^3 + 12400y^2 + 17960y + 7640$ \\
$Q_{55}(y) = 2y^{16} + 8y^{15} - 52y^{14} - 272y^{13} + 290y^{12} + 3072y^{11} + 1708y^{10} - 13528y^9 - 17160y^8 + 25528y^7 + 49564y^6 - 14048y^5 - 60382y^4 - 14560y^3 + 25180y^2 + 15400y + 2450$ \\
$P_{59}(y) = y^{22} + 8y^{21} + 32y^{20} + 126y^{19} + 664y^{18} + 3064y^{17} + 10303y^{16} + 25672y^{15} + 48984y^{14} + 73000y^{13} + 85432y^{12} + 75264y^{11} + 39939y^{10} - 9016y^9 - 53200y^8 - 78750y^7 - 80728y^6 - 65192y^5 - 42791y^4 - 22808y^3 - 9496y^2 - 2936y - 536$ \\
$Q_{59}(y) = 2y^{24} + 16y^{23} + 32y^{22} - 112y^{21} - 784y^{20} - 1952y^{19} - 1672y^{18} + 5328y^{17} + 27040y^{16} + 70400y^{15} + 135632y^{14} + 212576y^{13} + 282652y^{12} + 326064y^{11} + 330592y^{10} + 296848y^9 + 236944y^8 + 168224y^7 + 105912y^6 + 58864y^5 + 28512y^4 + 11808y^3 + 4144y^2 + 1056y + 242$ \\
$P_{62}(y) = y^{18} + 4y^{17} + 10y^{16} + 54y^{15} + 343y^{14} + 998y^{13} + 1821y^{12} + 2084y^{11} + 2271y^{10} + 4360y^9 + 8436y^8 + 9862y^7 + 4081y^6 - 4002y^5 - 7211y^4 - 3184y^3 + 304y^2 + 1536y + 336$ \\
$Q_{62}(y) = 2y^{20} + 8y^{19} - 12y^{18} - 128y^{17} - 290y^{16} - 8y^{15} + 1540y^{14} + 4320y^{13} + 6382y^{12} + 4640y^{11} - 1124y^{10} - 7032y^9 - 7388y^8 - 2376y^7 + 3368y^6 + 4096y^5 + 1490y^4 - 1200y^3 - 976y^2 - 192y + 288$ \\
$P_{66}(y) = y^{14} + 2y^{13} - y^{12} + 4y^{11} - 57y^{10} + 334y^9 + 1577y^8 - 3072y^7 - 7216y^6 + 12476y^5 + 3836y^4 - 17208y^3 + 20748y^2 - 17568y + 6912$ \\
$Q_{66}(y) = 2y^{16} + 4y^{15} - 66y^{14} - 120y^{13} + 846y^{12} + 1204y^{11} - 5694y^{10} - 4368y^9 + 23248y^8 + 1168y^7 - 52352y^6 + 30720y^5 + 43808y^4 - 56832y^3 + 18432y^2$ \\
$P_{69}(y) = y^{18} + 4y^{17} + 6y^{16} + 42y^{15} + 217y^{14} + 398y^{13} + 719y^{12} + 2344y^{11} + 1952y^{10} - 5504y^9 - 7386y^8 + 4220y^7 + 5291y^6 - 6276y^5 - 4794y^4 + 3582y^3 + 1233y^2 - 2394y + 117$ \\
$Q_{69}(y) = 2y^{20} + 8y^{19} - 20y^{18} - 152y^{17} - 158y^{16} + 632y^{15} + 1816y^{14} + 824y^{13} - 2248y^{12} - 1072y^{11} + 4796y^{10} + 3288y^9 - 5300y^8 - 2208y^7 + 6784y^6 + 344y^5 - 4792y^4 + 2256y^3 + 1392y^2 - 1440y + 450$ \\
$P_{70}(y) = y^{14} + 2y^{13} + 3y^{12} + 8y^{11} + 179y^{10} + 806y^9 + 965y^8 + 676y^7 + 2004y^6 - 420y^5 - 4468y^4 - 2392y^3 - 3652y^2 - 8976y - 528$ \\
$Q_{70}(y) = 2y^{16} + 4y^{15} - 26y^{14} - 48y^{13} + 38y^{12} - 92y^{11} + 58y^{10} + 1336y^9 + 1400y^8 + 3024y^7 + 7392y^6 + 6080y^5 + 8864y^4 + 11776y^3 + 5888y^2 + 6144y + 4608$ \\
$P_{71}(y) = y^{26} + 8y^{25} + 28y^{24} + 90y^{23} + 410y^{22} + 1458y^{21} + 3073y^{20} + 3798y^{19} + 4021y^{18} + 8108y^{17} + 13349y^{16} + 88y^{15} - 40168y^{14} - 63932y^{13} - 13304y^{12} + 79626y^{11} + 94791y^{10} - 5466y^9 - 91159y^8 - 52578y^7 + 31090y^6 + 41224y^5 + 1849y^4 - 12328y^3 - 3112y^2 + 1320y + 364$ \\
$Q_{71}(y) = 2y^{28} + 16y^{27} + 24y^{26} - 184y^{25} - 908y^{24} - 1032y^{23} + 3532y^{22} + 14280y^{21} + 17294y^{20} - 10520y^{19} - 57572y^{18} - 57304y^{17} + 28686y^{16} + 114240y^{15} + 73640y^{14} - 61824y^{13} - 119262y^{12} - 32328y^{11} + 67980y^{10} + 60880y^9 - 7196y^8 - 34416y^7 - 11396y^6 + 9048y^5 + 6760y^4 - 680y^3 - 1728y^2 - 176y + 242$ \\
$P_{78}(y) = y^{16} + 2y^{15} + y^{14} + 34y^{13} + 132y^{12} - 154y^{11} - 591y^{10} + 398y^9 + 2909y^8 + 3656y^7 - 2400y^6 - 12752y^5 - 10660y^4 + 5648y^3 + 14424y^2 + 5904y + 648$ \\
$Q_{78}(y) = 2y^{18} + 4y^{17} - 30y^{16} - 104y^{15} + 48y^{14} + 760y^{13} + 1084y^{12} - 1240y^{11} - 5108y^{10} - 3736y^9 + 6144y^8 + 13304y^7 + 5682y^6 - 10188y^5 - 15454y^4 - 5136y^3 + 5616y^2 + 6912y + 2592$ \\
$P_{87}(y) = y^{22} + 4y^{21} + 14y^{20} + 66y^{19} + 305y^{18} + 1226y^{17} + 4151y^{16} + 10852y^{15} + 24748y^{14} + 44920y^{13} + 72122y^{12} + 91948y^{11} + 101547y^{10} + 80524y^9 + 44982y^8 - 6562y^7 - 39367y^6 - 55542y^5 - 45531y^4 - 29628y^3 - 14220y^2 - 5040y - 2124$ \\
$Q_{87}(y) = 2y^{24} + 8y^{23} + 28y^{22} + 24y^{21} - 78y^{20} - 584y^{19} - 1656y^{18} - 3144y^{17} - 3080y^{16} + 3152y^{15} + 22476y^{14} + 63384y^{13} + 127484y^{12} + 210752y^{11} + 293792y^{10} + 355048y^9 + 374808y^8 + 343808y^7 + 277664y^6 + 190352y^5 + 113762y^4 + 54288y^3 + 21696y^2 + 5760y + 1152$ \\
$P_{94}(y) = y^{26} + 4y^{25} + 2y^{24} - 4y^{23} + 71y^{22} + 486y^{21} + 844y^{20} - 1026y^{19} - 2089y^{18} + 5456y^{17} + 2155y^{16} - 18040y^{15} + 9269y^{14} + 28398y^{13} - 45506y^{12} - 7458y^{11} + 68336y^{10} - 55696y^9 - 26471y^8 + 69872y^7 - 43972y^6 - 9720y^5 + 29632y^4 - 22032y^3 + 7216y^2 - 992y - 232$ \\
$Q_{94}(y) = 2y^{28} + 8y^{27} - 28y^{26} - 136y^{25} + 110y^{24} + 728y^{23} - 408y^{22} - 1528y^{21} + 3246y^{20} + 2136y^{19} - 8224y^{18} + 5544y^{17} + 11698y^{16} - 20952y^{15} + 5640y^{14} + 26896y^{13} - 36336y^{12} + 7016y^{11} + 35708y^{10} - 45272y^9 + 14290y^8 + 25200y^7 - 36628y^6 + 19208y^5 + 3986y^4 - 13584y^3 + 10448y^2 - 4160y + 800$ \\
$P_{95}(y) = y^{22} + 4y^{21} + 2y^{20} - 4y^{19} + 67y^{18} + 478y^{17} + 848y^{16} - 1034y^{15} - 3347y^{14} + 1542y^{13} + 7379y^{12} + 348y^{11} - 6511y^{10} + 7508y^9 + 6564y^8 - 19472y^7 + 9653y^6 + 10646y^5 - 18870y^4 + 6050y^3 + 3245y^2 - 930y + 165$ \\
$Q_{95}(y) = 2y^{24} + 8y^{23} - 28y^{22} - 136y^{21} + 102y^{20} + 712y^{19} - 272y^{18} - 1288y^{17} + 2522y^{16} + 1584y^{15} - 6368y^{14} + 1608y^{13} + 6970y^{12} - 8656y^{11} - 2656y^{10} + 9992y^9 - 3074y^8 - 5528y^7 + 6644y^6 - 176y^5 - 3372y^4 + 1480y^3 + 680y^2 - 1200y + 450$ \\
$P_{105}(y) = y^{18} + 2y^{17} - 3y^{16} + 24y^{15} + 24y^{14} - 496y^{13} - 566y^{12} + 3432y^{11} + 5926y^{10} - 10512y^9 - 26620y^8 + 9344y^7 + 57793y^6 + 17382y^5 - 57447y^4 - 44064y^3 + 11880y^2 + 30600y + 18000$ \\
$Q_{105}(y) = 2y^{20} + 4y^{19} - 38y^{18} - 124y^{17} + 216y^{16} + 1340y^{15} + 358y^{14} - 6716y^{13} - 9090y^{12} + 15256y^{11} + 42032y^{10} - 3352y^9 - 92498y^8 - 57748y^7 + 100086y^6 + 122404y^5 - 37192y^4 - 103140y^3 - 14550y^2 + 31500y + 11250$ \\
$P_{110}(y) = y^{20} + 2y^{19} + 7y^{18} + 38y^{17} + 179y^{16} + 440y^{15} + 1002y^{14} + 44y^{13} + 1091y^{12} - 3578y^{11} + 10043y^{10} + 1742y^9 + 29593y^8 - 16792y^7 + 2440y^6 - 35328y^5 + 15300y^4 + 18128y^3 + 4568y^2 - 14736y + 5016$ \\
$Q_{110}(y) = 2y^{22} + 4y^{21} + 14y^{20} - 32y^{19} - 114y^{18} - 388y^{17} - 274y^{16} + 608y^{15} + 2916y^{14} + 5792y^{13} + 3156y^{12} - 2112y^{11} - 16674y^{10} - 12788y^9 - 2594y^8 + 16128y^7 + 28046y^6 - 16268y^5 + 8146y^4 - 37792y^3 + 33824y^2 - 10752y + 1152$ \\
$P_{119}(y) = y^{26} + 4y^{25} + 14y^{24} + 40y^{23} + 193y^{22} + 814y^{21} + 3046y^{20} + 9970y^{19} + 27348y^{18} + 65380y^{17} + 137087y^{16} + 251940y^{15} + 411609y^{14} + 591020y^{13} + 748046y^{12} + 828256y^{11} + 778011y^{10} + 618670y^9 + 394924y^8 + 179710y^7 + 62284y^6 + 36940y^5 + 41961y^4 + 43232y^3 + 34104y^2 + 12824y + 868$ \\
$Q_{119}(y) = 2y^{28} + 8y^{27} + 28y^{26} + 80y^{25} + 130y^{24} + 104y^{23} - 356y^{22} - 2128y^{21} - 6184y^{20} - 13608y^{19} - 22984y^{18} - 25760y^{17} - 3422y^{16} + 78816y^{15} + 267868y^{14} + 605744y^{13} + 1110074y^{12} + 1738848y^{11} + 2379824y^{10} + 2872592y^9 + 3054738y^8 + 2848864y^7 + 2303696y^6 + 1580888y^5 + 894230y^4 + 395752y^3 + 126308y^2 + 25480y + 2450$ \\

\medskip

\section{Roots of $h_N(y)$ in terms of radicals}\label{sec A2}

Below we list the roots, in radicals, for each irreducible factor of $h_{N}$
of degree three or higher.  As before, we consider square-free $N$ such that
$\Gamma_{0}(N)^{+}$ has genus zero and denote by $\zeta_n=\exp(2\pi i /n)$ the primitive
$n$th root of unity.

\medskip

\noindent $N=11$: The roots of the polynomial $y^{3} - 2y^{2} - 76y - 212$ are
$$y = \frac{2}{3} + \left(\frac{1}{3}\zeta_{3} - \frac{1}{3}\zeta_{3}^{2}\right)\omega_2 - \frac{1777}{26912}\omega_2^2 + \left( - \frac{33}{26912}\zeta_{3} + \frac{33}{26912}\zeta_{3}^{2}\right)\omega_1\omega_2^2,$$
 where
$\omega_1 = \sqrt[2]{ - 11}$,
$\omega_2 = \sqrt[3]{\left(\frac{3554}{9}\zeta_{3} - \frac{3554}{9}\zeta_{3}^{2}\right) + 22\omega_1}$.\\

\noindent $N=17$: The roots of the polynomial $y^{4} + 2y^{3} - 39y^{2} - 176y - 212$ are
$$y= - \frac{1}{2} - \frac{1}{2}\omega_1 + 2\omega_2,$$ where
$\omega_1 = \sqrt[2]{17}$,
$\omega_2 = \sqrt[2]{4-\omega_1}$.\\

\noindent $N=19$: The roots of the polynomial $y^{3} - 4y^{2} - 16y - 12$ are
$$y = \frac{4}{3} + \left(\frac{1}{3}\zeta_{3} - \frac{1}{3}\zeta_{3}^{2}\right)\omega_2 - \frac{257}{2048}\omega_2^2 + \left( - \frac{3}{2048}\zeta_{3} + \frac{3}{2048}\zeta_{3}^{2}\right)\omega_1\omega_2^2,$$ where
$\omega_1 = \sqrt[2]{ - 19}$,
$\omega_2 = \sqrt[3]{\left(\frac{514}{9}\zeta_{3} - \frac{514}{9}\zeta_{3}^{2}\right) + 2\omega_1}$.\\

\noindent $N=22$: The roots of the polynomial $y^{3} + 6y^{2} + 8y + 4$ are
$$y = - 2 + \left(\frac{1}{3}\zeta_{3} - \frac{1}{3}\zeta_{3}^{2}\right)\omega_2 + \frac{3}{8}\omega_2^2 + \left( - \frac{1}{24}\zeta_{3} + \frac{1}{24}\zeta_{3}^{2}\right)\omega_1\omega_2^2,$$ where
$\omega_1 = \sqrt[2]{ - 11}$,
$\omega_2 = \sqrt[3]{\left( - 6\zeta_{3} + 6\zeta_{3}^{2}\right) + 2\omega_1}$.\\

\noindent $N=23$: The roots of the polynomial $y^{3} + 6y^{2} + 11y + 7$ are
$$y = - 2 + \left(\frac{1}{3}\zeta_{3} - \frac{1}{3}\zeta_{3}^{2}\right)\omega_2 + \frac{3}{2}\omega_2^2 + \left( - \frac{1}{6}\zeta_{3} + \frac{1}{6}\zeta_{3}^{2}\right)\omega_1\omega_2^2,$$
where $\omega_1 = \sqrt[2]{ - 23}$,
$\omega_2 = \sqrt[3]{\left( - \frac{3}{2}\zeta_{3} + \frac{3}{2}\zeta_{3}^{2}\right) + \frac{1}{2}\omega_1}$.\\

\noindent $N=23$: The roots of the polynomial $y^{3} - 2y^{2} - 17y - 25$ are
$$y = \frac{2}{3} + \left(\frac{1}{3}\zeta_{3} - \frac{1}{3}\zeta_{3}^{2}\right)\omega_2 - \frac{997}{6050}\omega_2^2 + \left( - \frac{69}{6050}\zeta_{3} + \frac{69}{6050}\zeta_{3}^{2}\right)\omega_1\omega_2^2,$$ where
$\omega_1 = \sqrt[2]{ - 23}$,
$\omega_2 = \sqrt[3]{\left(\frac{997}{18}\zeta_{3} - \frac{997}{18}\zeta_{3}^{2}\right) + \frac{23}{2}\omega_1}$.\\

\noindent $N=26$: The roots of the polynomial $y^{3} - 2y^{2} - 15y - 16$ are
$$y = \frac{2}{3} + \left(\frac{1}{3}\zeta_{3} - \frac{1}{3}\zeta_{3}^{2}\right)\omega_2 - \frac{359}{2401}\omega_2^2 + \left( - \frac{12}{2401}\zeta_{3} + \frac{12}{2401}\zeta_{3}^{2}\right)\omega_1\omega_2^2,$$ where
$\omega_1 = \sqrt[2]{ - 26}$,
$\omega_2 = \sqrt[3]{\left(\frac{359}{9}\zeta_{3} - \frac{359}{9}\zeta_{3}^{2}\right) + 4\omega_1}$.\\

\noindent $N=29$: The roots of the polynomial $y^{6} + 2y^{5} - 17y^{4} - 66y^{3} - 83y^{2} - 32y - 4$ are
\medskip \begin{center} $\displaystyle
y =  - \frac{1}{3} + \frac{1}{3}\omega_1 + \left( - \frac{4675663}{69945938}\zeta_{3} + \frac{9170235}{34972969}\zeta_{3}^{2}\right)\omega_3 + \left(\frac{230955}{69945938}\zeta_{3} + \frac{1143895}{34972969}\zeta_{3}^{2}\right)\omega_1\omega_3  + \left(\frac{14913792}{3602215807}\zeta_{3} + \frac{7468392}{3602215807}\zeta_{3}^{2}\right)\omega_2\omega_3 + \left( - \frac{949738}{3602215807}\zeta_{3} - \frac{2272628}{3602215807}\zeta_{3}^{2}\right)\omega_1\omega_2\omega_3 + \left(\frac{81809110004592}{1223108560674961}\zeta_{3} - \frac{84662907394671}{1223108560674961}\zeta_{3}^{2}\right)\omega_3^2 + \left(\frac{15630127276428}{1223108560674961}\zeta_{3} + \frac{20828508011880}{1223108560674961}\zeta_{3}^{2}\right)\omega_1\omega_3^2  + \left(\frac{291290361453255}{125980181749520983}\zeta_{3} + \frac{146530487024733}{251960363499041966}\zeta_{3}^{2}\right)\omega_2\omega_3^2 - \left(  \frac{109445587541901}{125980181749520983}\zeta_{3} + \frac{134076636577647}{251960363499041966}\zeta_{3}^{2}\right)\omega_1\omega_2\omega_3^2$,
\end{center}
\medskip
where $\omega_1 = \sqrt[2]{29}$,
$\omega_2 = \sqrt[2]{ - 470 + 42\omega_1}$,\\
$\omega_3 = \sqrt[3]{\left( - \frac{233}{9}\zeta_{3} - \frac{214}{9}\zeta_{3}^{2}\right) + \left( - \frac{164}{27}\zeta_{3} - \frac{65}{27}\zeta_{3}^{2}\right)\omega_1 + \left( - \frac{427}{618}\zeta_{3} - \frac{673}{618}\zeta_{3}^{2}\right)\omega_2 + \left( - \frac{61}{1854}\zeta_{3} - \frac{155}{1854}\zeta_{3}^{2}\right)\omega_1\omega_2}$.\\

\noindent $N=31$: The roots of the polynomial $y^{3} + 4y^{2} + 3y + 1$ are
$$y = - \frac{4}{3} + \left(\frac{1}{3}\zeta_{3} - \frac{1}{3}\zeta_{3}^{2}\right)\omega_2 + \frac{47}{98}\omega_2^2 + \left( - \frac{3}{98}\zeta_{3} + \frac{3}{98}\zeta_{3}^{2}\right)\omega_1\omega_2^2,$$ where
$\omega_1 = \sqrt[2]{ - 31}$,
$\omega_2 = \sqrt[3]{\left( - \frac{47}{18}\zeta_{3} + \frac{47}{18}\zeta_{3}^{2}\right) + \frac{1}{2}\omega_1}$.\\

\noindent $N=31$: The roots of the polynomial $y^{3} - 17y - 27$ are
$$y = \left(\frac{1}{3}\zeta_{3} - \frac{1}{3}\zeta_{3}^{2}\right)\omega_2 - \frac{81}{578}\omega_2^2 + \left( - \frac{1}{1734}\zeta_{3} + \frac{1}{1734}\zeta_{3}^{2}\right)\omega_1\omega_2^2,$$ where
$\omega_1 = \sqrt[2]{ - 31}$,
$\omega_2 = \sqrt[3]{\left(\frac{81}{2}\zeta_{3} - \frac{81}{2}\zeta_{3}^{2}\right) + \frac{1}{2}\omega_1}$.\\

\noindent $N=33$: The roots of the polynomial $y^{3} + 4y^{2} + 8y + 4$ are
$$y = - \frac{4}{3} + \left(\frac{1}{3}\zeta_{3} - \frac{1}{3}\zeta_{3}^{2}\right)\omega_2 - \frac{13}{32}\omega_2^2 + \left( - \frac{3}{32}\zeta_{3} + \frac{3}{32}\zeta_{3}^{2}\right)\omega_1\omega_2^2,$$ where
$\omega_1 = \sqrt[2]{ - 11}$,
$\omega_2 = \sqrt[3]{\left(\frac{26}{9}\zeta_{3} - \frac{26}{9}\zeta_{3}^{2}\right) + 2\omega_1}$.\\

\noindent $N=35$:
The roots of the polynomial $y^{3} - 2y^{2} - 4y - 20$ are
$$y = \frac{2}{3} + \left(\frac{1}{3}\zeta_{3} - \frac{1}{3}\zeta_{3}^{2}\right)\omega_2 - \frac{157}{128}\omega_2^2 + \left( - \frac{15}{128}\zeta_{3} + \frac{15}{128}\zeta_{3}^{2}\right)\omega_1\omega_2^2,$$ where
$\omega_1 = \sqrt[2]{ - 35}$,
$\omega_2 = \sqrt[3]{\left(\frac{314}{9}\zeta_{3} - \frac{314}{9}\zeta_{3}^{2}\right) + 10\omega_1}$.\\

\noindent $N=38$: The roots of the polynomial $y^{3} + 4y^{2} + 4y + 4$ are
$$y = - \frac{4}{3} + \left(\frac{1}{3}\zeta_{3} - \frac{1}{3}\zeta_{3}^{2}\right)\omega_2 + \frac{23}{8}\omega_2^2 + \left( - \frac{3}{8}\zeta_{3} + \frac{3}{8}\zeta_{3}^{2}\right)\omega_1\omega_2^2,$$ where
$\omega_1 = \sqrt[2]{ - 19}$,
$\omega_2 = \sqrt[3]{\left( - \frac{46}{9}\zeta_{3} + \frac{46}{9}\zeta_{3}^{2}\right) + 2\omega_1}$.\\

\noindent $N=38$: The roots of the polynomial $y^{3} - 2y^{2} - 7y - 8$ are
$$y = \frac{2}{3} + \left(\frac{1}{3}\zeta_{3} - \frac{1}{3}\zeta_{3}^{2}\right)\omega_2 - \frac{179}{625}\omega_2^2 + \left( - \frac{12}{625}\zeta_{3} + \frac{12}{625}\zeta_{3}^{2}\right)\omega_1\omega_2^2,$$ where
$\omega_1 = \sqrt[2]{ - 38}$,
$\omega_2 = \sqrt[3]{\left(\frac{179}{9}\zeta_{3} - \frac{179}{9}\zeta_{3}^{2}\right) + 4\omega_1}$.\\

\noindent $N=41$: The roots of the polynomial $y^{8} + 4y^{7} - 8y^{6} - 66y^{5} - 120y^{4} - 56y^{3} + 53y^{2} + 36y - 16$ are
\medskip \begin{center} $\displaystyle
y = - \frac{1}{2} - \frac{1}{4}\omega_2 - \frac{5171}{34292}\omega_4 + \frac{1015}{34292}\omega_1\omega_4 + \frac{28441}{342920}\omega_2\omega_4 + \frac{671}{342920}\omega_1\omega_2\omega_4 - \frac{43559}{685840}\omega_3\omega_4 + \frac{4591}{685840}\omega_1\omega_3\omega_4 + \frac{307}{274336}\omega_2\omega_3\omega_4 - \frac{1055}{274336}\omega_1\omega_2\omega_3\omega_4$,
\end{center}
\medskip
where
$\omega_1 = \sqrt[2]{41}$,
$\omega_2 = \sqrt[2]{10 - 2\omega_1}$,
$\omega_3 = \sqrt[2]{10 + 2\omega_1}$,
$\omega_4 = \sqrt[2]{\frac{173}{2} + \frac{9}{2}\omega_1 + \frac{1}{2}\omega_2 - \frac{3}{2}\omega_1\omega_2 + 5\omega_3 + 3\omega_1\omega_3-\omega_2\omega_3}$.\\

\noindent $N=46$: The roots of the polynomial $y^{3} + 2y^{2} + y + 1$ are
$$y =  - \frac{2}{3} + \left(\frac{1}{3}\zeta_{3} - \frac{1}{3}\zeta_{3}^{2}\right)\omega_2 + \frac{25}{2}\omega_2^2 + \left( - \frac{3}{2}\zeta_{3} + \frac{3}{2}\zeta_{3}^{2}\right)\omega_1\omega_2^2,$$ where
$\omega_1 = \sqrt[2]{ - 23}$,
$\omega_2 = \sqrt[3]{\left( - \frac{25}{18}\zeta_{3} + \frac{25}{18}\zeta_{3}^{2}\right) + \frac{1}{2}\omega_1}$.\\

\noindent $N=46$: The roots of the polynomial $y^{3} + 2y^{2} - 3y + 1$ are
$$y =  - \frac{2}{3} + \left(\frac{1}{3}\zeta_{3} - \frac{1}{3}\zeta_{3}^{2}\right)\omega_2 + \frac{97}{338}\omega_2^2 + \left( - \frac{3}{338}\zeta_{3} + \frac{3}{338}\zeta_{3}^{2}\right)\omega_1\omega_2^2,$$ where
$\omega_1 = \sqrt[2]{ - 23}$,
$\omega_2 = \sqrt[3]{\left( - \frac{97}{18}\zeta_{3} + \frac{97}{18}\zeta_{3}^{2}\right) + \frac{1}{2}\omega_1}$.\\

\noindent $N=47$: The roots of the polynomial $y^{5} + 4y^{4} + 7y^{3} + 8y^{2} + 4y + 1$ are
\medskip \begin{center} $\displaystyle
y = - \frac{4}{5} + \left( - \frac{8}{11}\zeta_{5} - \frac{4}{11}\zeta_{5}^{2} - \frac{6}{11}\zeta_{5}^{3} - \frac{5}{11}\zeta_{5}^{4}\right)\omega_2 + \left( - \frac{970}{121}\zeta_{5} - \frac{5315}{242}\zeta_{5}^{2} - \frac{1455}{242}\zeta_{5}^{3} + \frac{1125}{242}\zeta_{5}^{4}\right)\omega_2^2 + \left(\frac{115}{121}\zeta_{5} - \frac{215}{242}\zeta_{5}^{2} - \frac{735}{242}\zeta_{5}^{3} - \frac{255}{242}\zeta_{5}^{4}\right)\omega_1\omega_2^2 + \left(\frac{1753525}{1331}\zeta_{5} + \frac{3814375}{2662}\zeta_{5}^{2} + \frac{245050}{1331}\zeta_{5}^{3} - \frac{932000}{1331}\zeta_{5}^{4}\right)\omega_2^3  + \left(\frac{59725}{1331}\zeta_{5} + \frac{643275}{2662}\zeta_{5}^{2} + \frac{423675}{1331}\zeta_{5}^{3} + \frac{224775}{1331}\zeta_{5}^{4}\right)\omega_1\omega_2^3  + \left(\frac{358621125}{14641}\zeta_{5} + \frac{224238000}{14641}\zeta_{5}^{2} - \frac{427867625}{29282}\zeta_{5}^{3} - \frac{711476625}{29282}\zeta_{5}^{4}\right)\omega_2^4 + \left(\frac{37724375}{14641}\zeta_{5} + \frac{98718750}{14641}\zeta_{5}^{2} + \frac{198648125}{29282}\zeta_{5}^{3} + \frac{76984375}{29282}\zeta_{5}^{4}\right)\omega_1\omega_2^4$,
\end{center}
\medskip
where
$\omega_1 = \sqrt[2]{ - 47}$,
$\omega_2 = \sqrt[5]{\left( - \frac{38961}{6250}\zeta_{5} - \frac{19988}{3125}\zeta_{5}^{2} - \frac{2653}{3125}\zeta_{5}^{3} + \frac{19899}{6250}\zeta_{5}^{4}\right) + \left(\frac{299}{1250}\zeta_{5} + \frac{143}{125}\zeta_{5}^{2} + \frac{182}{125}\zeta_{5}^{3} + \frac{1001}{1250}\zeta_{5}^{4}\right)\omega_1}$.\\

\noindent $N=47$: The roots of the polynomial $y^{5} - 5y^{3} - 20y^{2} - 24y - 19$ are
\medskip \begin{center} $\displaystyle
y = \left( - \frac{2}{11}\zeta_{5} - \frac{1}{11}\zeta_{5}^{2} + \frac{4}{11}\zeta_{5}^{3} - \frac{4}{11}\zeta_{5}^{4}\right)\omega_2 + \left(\frac{343205}{203522}\zeta_{5} - \frac{527035}{203522}\zeta_{5}^{2} + \frac{251290}{101761}\zeta_{5}^{3} - \frac{309475}{203522}\zeta_{5}^{4}\right)\omega_2^2 + \left(\frac{139289}{203522}\zeta_{5} + \frac{54833}{203522}\zeta_{5}^{2} + \frac{25931}{101761}\zeta_{5}^{3} + \frac{142981}{203522}\zeta_{5}^{4}\right)\omega_1\omega_2^2 + \left(\frac{568028425}{64923518}\zeta_{5} - \frac{19404515}{32461759}\zeta_{5}^{2} + \frac{196212305}{32461759}\zeta_{5}^{3} + \frac{156921685}{32461759}\zeta_{5}^{4}\right)\omega_2^3  + \left(\frac{36878525}{64923518}\zeta_{5} + \frac{27017700}{32461759}\zeta_{5}^{2} - \frac{5814525}{32461759}\zeta_{5}^{3} + \frac{1370000}{1119371}\zeta_{5}^{4}\right)\omega_1\omega_2^3 + \left(\frac{946600115000}{10355301121}\zeta_{5} + \frac{786413529375}{20710602242}\zeta_{5}^{2} + \frac{340940394375}{10355301121}\zeta_{5}^{3} + \frac{1955629024375}{20710602242}\zeta_{5}^{4}\right)\omega_2^4 + \left( - \frac{435681550}{85581001}\zeta_{5} + \frac{1176486975}{171162002}\zeta_{5}^{2} - \frac{631948875}{85581001}\zeta_{5}^{3} + \frac{635172125}{171162002}\zeta_{5}^{4}\right)\omega_1\omega_2^4$,
\end{center}
\medskip
where
$\omega_1 = \sqrt[2]{ - 47}$,
$\omega_2 = \sqrt[5]{\left(\frac{268}{5}\zeta_{5} + \frac{1679}{50}\zeta_{5}^{2} + \frac{889}{50}\zeta_{5}^{3} - \frac{49}{5}\zeta_{5}^{4}\right) + \left( - \frac{1759}{625}\zeta_{5} - \frac{9739}{1250}\zeta_{5}^{2} - \frac{9211}{1250}\zeta_{5}^{3} - \frac{4366}{625}\zeta_{5}^{4}\right)\omega_1}$.\\

\noindent $N=51$: The roots of the polynomial $y^{3} - 2y^{2} - 4y - 4$ are
$$y = \frac{2}{3} + \left(\frac{1}{3}\zeta_{3} - \frac{1}{3}\zeta_{3}^{2}\right)\omega_2 - \frac{49}{128}\omega_2^2 + \left( - \frac{3}{128}\zeta_{3} + \frac{3}{128}\zeta_{3}^{2}\right)\omega_1\omega_2^2,$$ where
$\omega_1 = \sqrt[2]{ - 51}$,
$\omega_2 = \sqrt[3]{\left(\frac{98}{9}\zeta_{3} - \frac{98}{9}\zeta_{3}^{2}\right) + 2\omega_1}$.\\

\noindent $N=51$: The roots of the polynomial $y^{4} + 2y^{3} + 3y^{2} - 2y + 1$ are
$$ y=- \frac{1}{2}-\omega_1 + \frac{1}{2}\omega_2,$$ where
$\omega_1 = \sqrt[2]{-1}$, $\omega_2 = \sqrt[2]{1 + 4\omega_1}$,
$\omega_3 = \sqrt[2]{1 - 4\omega_1}$.\\

\noindent $N=55$: The roots of the polynomial $y^{3} + 3y^{2}-y - 7$ are
$$y = -1 + \left(\frac{1}{3}\zeta_{3} - \frac{1}{3}\zeta_{3}^{2}\right)\omega_2 - \frac{3}{8}\omega_2^2 + \left( - \frac{1}{24}\zeta_{3} + \frac{1}{24}\zeta_{3}^{2}\right)\omega_1\omega_2^2,$$ where
$\omega_1 = \sqrt[2]{ - 11}$,
$\omega_2 = \sqrt[3]{\left(6\zeta_{3} - 6\zeta_{3}^{2}\right) + 2\omega_1}$.\\

\noindent $N=59$: The roots of the polynomial $y^{3} + 2y^{2} + 1$ are
$$y =  - \frac{2}{3} + \left(\frac{1}{3}\zeta_{3} - \frac{1}{3}\zeta_{3}^{2}\right)\omega_2 + \frac{43}{32}\omega_2^2 + \left( - \frac{3}{32}\zeta_{3} + \frac{3}{32}\zeta_{3}^{2}\right)\omega_1\omega_2^2,$$ where
$\omega_1 = \sqrt[2]{ - 59}$,
$\omega_2 = \sqrt[3]{\left( - \frac{43}{18}\zeta_{3} + \frac{43}{18}\zeta_{3}^{2}\right) + \frac{1}{2}\omega_1}$.\\

\noindent $N=59$: The roots of the polynomial $y^{9} + 2y^{8} - 4y^{7} - 21y^{6} - 44y^{5} - 60y^{4} - 61y^{3} - 46y^{2} - 24y - 11$ are
\medskip \begin{center} $\displaystyle
y = - \frac{2}{9} + \left(\frac{1}{9}\zeta_{3} - \frac{1}{9}\zeta_{3}^{2}\right)\omega_2 - \frac{299}{4704}\omega_2^2 + \left( - \frac{1}{1568}\zeta_{3} + \frac{1}{1568}\zeta_{3}^{2}\right)\omega_1\omega_2^2 + \left(\frac{939}{2131}\zeta_{3} - \frac{466}{2131}\zeta_{3}^{2}\right)\omega_3 + \left(\frac{158}{6393}\zeta_{3} + \frac{641}{6393}\zeta_{3}^{2}\right)\omega_1\omega_3 + \left(\frac{2543}{59668}\zeta_{3} - \frac{7419}{59668}\zeta_{3}^{2}\right)\omega_2\omega_3 + \left(\frac{409}{25572}\zeta_{3} + \frac{515}{179004}\zeta_{3}^{2}\right)\omega_1\omega_2\omega_3 + \left(\frac{4849}{1670704}\zeta_{3} - \frac{13273}{417676}\zeta_{3}^{2}\right)\omega_2^2\omega_3 + \left(\frac{10113}{1670704}\zeta_{3} + \frac{157}{119336}\zeta_{3}^{2}\right)\omega_1\omega_2^2\omega_3 + \left( - \frac{121442}{4541161}\zeta_{3} + \frac{1997833}{4541161}\zeta_{3}^{2}\right)\omega_3^2 + \left( - \frac{137714}{4541161}\zeta_{3} + \frac{53463}{4541161}\zeta_{3}^{2}\right)\omega_1\omega_3^2 + \left(\frac{53144863}{254305016}\zeta_{3} + \frac{15588401}{254305016}\zeta_{3}^{2}\right)\omega_2\omega_3^2 + \left( - \frac{26755}{254305016}\zeta_{3} + \frac{1872733}{254305016}\zeta_{3}^{2}\right)\omega_1\omega_2\omega_3^2 + \left(\frac{78731649}{1780135112}\zeta_{3} - \frac{293856}{222516889}\zeta_{3}^{2}\right)\omega_2^2\omega_3^2+\left(\frac{5454327}{1780135112}\zeta_{3} + \frac{4647105}{445033778}\zeta_{3}^{2}\right)\omega_1\omega_2^2\omega_3^2$,
\end{center}
\medskip
where
$\omega_1 = \sqrt[2]{ - 59}$,
$\omega_2 = \sqrt[3]{\left(\frac{299}{18}\zeta_{3} - \frac{299}{18}\zeta_{3}^{2}\right) + \frac{1}{2}\omega_1}$, $\omega_3 =\sqrt[3]{a}$, and
$a=\left(  \frac{-218}{27}\zeta_{3} - \frac{226}{27}\zeta_{3}^{2}\right) + \left( \frac{-22}{27}\zeta_{3} + \frac{14}{27}\zeta_{3}^{2}\right)\omega_1 - \left(  \frac{593}{189}\zeta_{3} + \frac{173}{63}\zeta_{3}^{2}\right)\omega_2 + \left(  \frac{-1}{7}\zeta_{3} + \frac{5}{21}\zeta_{3}^{2}\right)\omega_1\omega_2 - \left( \frac{3553}{5292}\zeta_{3} + \frac{445}{2646}\zeta_{3}^{2}\right)\omega_2^2 + \left(\frac{19}{588}\zeta_{3} + \frac{2}{21}\zeta_{3}^{2}\right)\omega_1\omega_2^2$.\\

\noindent $N=62$: The roots of the polynomial $y^{3} + 4y^{2} + 5y + 3$ are
$$y = - \frac{4}{3} + \left(\frac{1}{3}\zeta_{3} - \frac{1}{3}\zeta_{3}^{2}\right)\omega_2 + \frac{29}{2}\omega_2^2 + \left( - \frac{3}{2}\zeta_{3} + \frac{3}{2}\zeta_{3}^{2}\right)\omega_1\omega_2^2,$$ where
$\omega_1 = \sqrt[2]{ - 31}$,
$\omega_2 = \sqrt[3]{\left( - \frac{29}{18}\zeta_{3} + \frac{29}{18}\zeta_{3}^{2}\right) + \frac{1}{2}\omega_1}$.\\

\noindent $N=62$: The roots of the polynomial $y^{3} + y-1$ are
$$y = \left(\frac{1}{3}\zeta_{3} - \frac{1}{3}\zeta_{3}^{2}\right)\omega_2 - \frac{3}{2}\omega_2^2 + \left( - \frac{1}{6}\zeta_{3} + \frac{1}{6}\zeta_{3}^{2}\right)\omega_1\omega_2^2,$$ where
$\omega_1 = \sqrt[2]{ - 31}$,
$\omega_2 = \sqrt[3]{\left(\frac{3}{2}\zeta_{3} - \frac{3}{2}\zeta_{3}^{2}\right) + \frac{1}{2}\omega_1}$.\\

\noindent $N=62$: The roots of the polynomial $y^{4} - 2y^{3} - 3y^{2} - 4y + 4$ are
$$y = \frac{1}{2}-\omega_1 + \frac{1}{2}\omega_2,$$ where
$\omega_1 = \sqrt[2]{2}$,
$\omega_2 = \sqrt[2]{1 - 4\omega_1}$.\\

\noindent $N=66$: The roots of the polynomial $y^{3} - 4y + 4$ are
$$y = \left(\frac{1}{3}\zeta_{3} - \frac{1}{3}\zeta_{3}^{2}\right)\omega_2 + \frac{3}{8}\omega_2^2 + \left( - \frac{1}{24}\zeta_{3} + \frac{1}{24}\zeta_{3}^{2}\right)\omega_1\omega_2^2,$$ where
$\omega_1 = \sqrt[2]{ - 11}$,
$\omega_2 = \sqrt[3]{\left( - 6\zeta_{3} + 6\zeta_{3}^{2}\right) + 2\omega_1}$.\\

\noindent $N=69$: The roots of the polynomial $y^{3} + 4y^{2} + 7y + 5$ are
$$y = - \frac{4}{3} + \left(\frac{1}{3}\zeta_{3} - \frac{1}{3}\zeta_{3}^{2}\right)\omega_2 + \frac{11}{50}\omega_2^2 + \left( - \frac{3}{50}\zeta_{3} + \frac{3}{50}\zeta_{3}^{2}\right)\omega_1\omega_2^2,$$ where
$\omega_1 = \sqrt[2]{ - 23}$,
$\omega_2 = \sqrt[3]{\left( - \frac{11}{18}\zeta_{3} + \frac{11}{18}\zeta_{3}^{2}\right) + \frac{1}{2}\omega_1}$.\\

\noindent $N=69$: The roots of the polynomial $y^{3}-y + 1$ are
$$y = \left(\frac{1}{3}\zeta_{3} - \frac{1}{3}\zeta_{3}^{2}\right)\omega_2 + \frac{3}{2}\omega_2^2 + \left( - \frac{1}{6}\zeta_{3} + \frac{1}{6}\zeta_{3}^{2}\right)\omega_1\omega_2^2,$$ where
$\omega_1 = \sqrt[2]{ - 23}$,
$\omega_2 = \sqrt[3]{\left( - \frac{3}{2}\zeta_{3} + \frac{3}{2}\zeta_{3}^{2}\right) + \frac{1}{2}\omega_1}$.\\

\noindent $N=69$: The roots of the polynomial $y^{4} - 2y^{3} - 5y^{2} + 6y - 3$ are
$$\frac{1}{2}+\sqrt[2]{\frac{13}{4} + 2\omega_1},$$ where
$\omega_1 = \sqrt[2]{3}$.\\

\noindent $N=70$: The roots of the polynomial $y^{3} + 2y^{2} + 4$ are
$$y = - \frac{2}{3} + \left(\frac{1}{3}\zeta_{3} - \frac{1}{3}\zeta_{3}^{2}\right)\omega_2 + \frac{31}{8}\omega_2^2 + \left( - \frac{3}{8}\zeta_{3} + \frac{3}{8}\zeta_{3}^{2}\right)\omega_1\omega_2^2,$$ where
$\omega_1 = \sqrt[2]{ - 35}$,
$\omega_2 = \sqrt[3]{\left( - \frac{62}{9}\zeta_{3} + \frac{62}{9}\zeta_{3}^{2}\right) + 2\omega_1}$.\\

\noindent $N=71$: The roots of the polynomial $y^{7} + 4y^{6} + 5y^{5} + y^{4} - 3y^{3} - 2y^{2} + 1$ are
\medskip \begin{center} $\displaystyle
y = - \frac{4}{7} + \left( - \frac{32}{43}\zeta_{7} - \frac{16}{43}\zeta_{7}^{2} - \frac{24}{43}\zeta_{7}^{3} - \frac{20}{43}\zeta_{7}^{4} - \frac{22}{43}\zeta_{7}^{5} - \frac{21}{43}\zeta_{7}^{6}\right)\omega_2 + \left(\frac{174622105}{34794482}\zeta_{7} - \frac{9844975}{34794482}\zeta_{7}^{2} + \frac{61229896}{17397241}\zeta_{7}^{3} + \frac{36271795}{34794482}\zeta_{7}^{4} + \frac{97890387}{34794482}\zeta_{7}^{5} + \frac{90218359}{34794482}\zeta_{7}^{6}\right)\omega_2^2 + \left(\frac{8711297}{34794482}\zeta_{7} + \frac{5725349}{34794482}\zeta_{7}^{2} + \frac{100555}{17397241}\zeta_{7}^{3} + \frac{14354123}{34794482}\zeta_{7}^{4} - \frac{4750193}{34794482}\zeta_{7}^{5} + \frac{13232975}{34794482}\zeta_{7}^{6}\right)\omega_1\omega_2^2 + \left(\frac{555798451705}{72563892211}\zeta_{7} - \frac{679386405271}{145127784422}\zeta_{7}^{2} + \frac{1588398342496}{72563892211}\zeta_{7}^{3} - \frac{701595040909}{72563892211}\zeta_{7}^{4} + \frac{497352533055}{72563892211}\zeta_{7}^{5} + \frac{1596681652993}{145127784422}\zeta_{7}^{6}\right)\omega_2^3 + \left(\frac{228601539705}{72563892211}\zeta_{7} + \frac{368589545135}{145127784422}\zeta_{7}^{2} + \frac{97604929758}{72563892211}\zeta_{7}^{3} + \frac{179358836139}{72563892211}\zeta_{7}^{4} + \frac{111147366351}{72563892211}\zeta_{7}^{5} + \frac{550999991661}{145127784422}\zeta_{7}^{6}\right)\omega_1\omega_2^3  + \left(\frac{159502491007496}{7038697544467}\zeta_{7} - \frac{733719054247991}{14077395088934}\zeta_{7}^{2} - \frac{39648191022636}{7038697544467}\zeta_{7}^{3} \right. - \left. \frac{12121839188319}{14077395088934}\zeta_{7}^{4} + \frac{24475449588212}{7038697544467}\zeta_{7}^{5} - \frac{348509408996700}{7038697544467}\zeta_{7}^{6}\right)\omega_2^4 + \left( - \frac{307216450789206}{302663994412081}\zeta_{7} - \frac{2759125653830875}{605327988824162}\zeta_{7}^{2} + \frac{832323889505164}{302663994412081}\zeta_{7}^{3} \right. + \left. \frac{403893925611603}{605327988824162}\zeta_{7}^{4} - \frac{1857600026573158}{302663994412081}\zeta_{7}^{5} + \frac{640718340638572}{302663994412081}\zeta_{7}^{6}\right)\omega_1\omega_2^4 + \left(\frac{450750305278051280803}{2524823041385579702}\zeta_{7} + \frac{1078197572499656134666}{1262411520692789851}\zeta_{7}^{2} - \frac{320492123219545165049}{2524823041385579702}\zeta_{7}^{3} \right. + \left. \frac{830452693893445825419}{1262411520692789851}\zeta_{7}^{4} - \frac{67276531034397215342}{1262411520692789851}\zeta_{7}^{5} + \frac{1103882852532728227946}{1262411520692789851}\zeta_{7}^{6}\right)\omega_2^5 + \left( - \frac{209785041589961724971}{2524823041385579702}\zeta_{7} + \frac{16093904888430380676}{1262411520692789851}\zeta_{7}^{2} - \frac{260213121296421268661}{2524823041385579702}\zeta_{7}^{3} \right. - \left. \frac{40922785863208341771}{1262411520692789851}\zeta_{7}^{4} + \frac{1352721906365722788}{1262411520692789851}\zeta_{7}^{5} - \frac{136005759492209605558}{1262411520692789851}\zeta_{7}^{6}\right)\omega_1\omega_2^5 + \left( - \frac{19032215189659588806887655}{5265518452809626468521}\zeta_{7} - \frac{15856112685852658054179302}{5265518452809626468521}\zeta_{7}^{2} + \frac{944315639184097376399798}{5265518452809626468521}\zeta_{7}^{3} \right. - \left. \frac{76948579742787286330488573}{10531036905619252937042}\zeta_{7}^{4} + \frac{29839467008566836734900983}{10531036905619252937042}\zeta_{7}^{5} - \frac{35534190049531523735890946}{5265518452809626468521}\zeta_{7}^{6}\right)\omega_2^6 + \left(\frac{5728492596102583740611241}{5265518452809626468521}\zeta_{7} - \frac{684440361825865132102636}{5265518452809626468521}\zeta_{7}^{2} + \frac{4510605264243392773952976}{5265518452809626468521}\zeta_{7}^{3} \right. + \left. \frac{2598011376578631982788771}{10531036905619252937042}\zeta_{7}^{4} + \frac{5280886452082331794376421}{10531036905619252937042}\zeta_{7}^{5} + \frac{3364734822070438206187630}{5265518452809626468521}\zeta_{7}^{6}\right)\omega_1\omega_2^6,$
\end{center}
\medskip
where
$\omega_1 = \sqrt[2]{ - 71}$,
$\omega_2 = \sqrt[7]{a}$, and $a=\left( - \frac{1012993}{823543}\zeta_{7} - \frac{1543516}{823543}\zeta_{7}^{2} - \frac{2818855}{1647086}\zeta_{7}^{3} - \frac{1038797}{1647086}\zeta_{7}^{4} + \frac{388015}{823543}\zeta_{7}^{5} + \frac{586927}{823543}\zeta_{7}^{6}\right) + \left( - \frac{179}{117649}\zeta_{7} - \frac{402}{117649}\zeta_{7}^{2} - \frac{3251}{235298}\zeta_{7}^{3} - \frac{3371}{235298}\zeta_{7}^{4} - \frac{2349}{117649}\zeta_{7}^{5} + \frac{67}{117649}\zeta_{7}^{6}\right)\omega_1$.\\

\noindent $N=71$: The roots of the polynomial $y^{7} - 7y^{5} - 11y^{4} + 5y^{3} + 18y^{2} + 4y - 11$ are
\medskip \begin{center} $\displaystyle
y = \left( - \frac{8}{43}\zeta_{7} - \frac{4}{43}\zeta_{7}^{2} - \frac{6}{43}\zeta_{7}^{3} - \frac{5}{43}\zeta_{7}^{4} + \frac{16}{43}\zeta_{7}^{5} - \frac{16}{43}\zeta_{7}^{6}\right)\omega_2 + \left(\frac{11319}{3698}\zeta_{7} + \frac{9814}{1849}\zeta_{7}^{2} + \frac{17493}{3698}\zeta_{7}^{3} + \frac{8547}{3698}\zeta_{7}^{4} - \frac{2849}{3698}\zeta_{7}^{5} - \frac{4991}{3698}\zeta_{7}^{6}\right)\omega_2^2 + \left(\frac{117}{3698}\zeta_{7} + \frac{355}{1849}\zeta_{7}^{2} + \frac{1891}{3698}\zeta_{7}^{3} + \frac{55}{86}\zeta_{7}^{4} + \frac{2041}{3698}\zeta_{7}^{5} + \frac{1059}{3698}\zeta_{7}^{6}\right)\omega_1\omega_2^2 + \left(\frac{425334}{79507}\zeta_{7} + \frac{421750}{79507}\zeta_{7}^{2} + \frac{1678033}{79507}\zeta_{7}^{3} + \frac{1600445}{159014}\zeta_{7}^{4} + \frac{1175174}{79507}\zeta_{7}^{5} + \frac{113953}{159014}\zeta_{7}^{6}\right)\omega_2^3 + \left( - \frac{160622}{79507}\zeta_{7} - \frac{100940}{79507}\zeta_{7}^{2} - \frac{101871}{79507}\zeta_{7}^{3} - \frac{61201}{159014}\zeta_{7}^{4} + \frac{57232}{79507}\zeta_{7}^{5} - \frac{66591}{159014}\zeta_{7}^{6}\right)\omega_1\omega_2^3 + \left(\frac{167504736}{3418801}\zeta_{7} - \frac{80901352}{3418801}\zeta_{7}^{2} + \frac{224386484}{3418801}\zeta_{7}^{3} + \frac{12101040}{3418801}\zeta_{7}^{4} + \frac{351506743}{6837602}\zeta_{7}^{5} + \frac{222848815}{6837602}\zeta_{7}^{6}\right)\omega_2^4+ \left( - \frac{26381698}{3418801}\zeta_{7} - \frac{16942828}{3418801}\zeta_{7}^{2} - \frac{15892954}{3418801}\zeta_{7}^{3} - \frac{30669296}{3418801}\zeta_{7}^{4} - \frac{2143407}{6837602}\zeta_{7}^{5} - \frac{1450155}{159014}\zeta_{7}^{6}\right)\omega_1\omega_2^4 + \left( - \frac{7809712120}{147008443}\zeta_{7} - \frac{50788900085}{294016886}\zeta_{7}^{2} - \frac{35919887815}{294016886}\zeta_{7}^{3} - \frac{9212056301}{147008443}\zeta_{7}^{4} + \frac{5337121160}{147008443}\zeta_{7}^{5} + \frac{12886679442}{147008443}\zeta_{7}^{6}\right)\omega_2^5 + \left( - \frac{29406076}{147008443}\zeta_{7} - \frac{2910042527}{294016886}\zeta_{7}^{2} - \frac{5687228463}{294016886}\zeta_{7}^{3} - \frac{4584773641}{147008443}\zeta_{7}^{4} - \frac{3140520348}{147008443}\zeta_{7}^{5} - \frac{2175840394}{147008443}\zeta_{7}^{6}\right)\omega_1\omega_2^5 + \left(\frac{9047674208531}{12642726098}\zeta_{7} - \frac{152859160790}{6321363049}\zeta_{7}^{2} + \frac{1386633786643}{6321363049}\zeta_{7}^{3} + \frac{1281023018884}{6321363049}\zeta_{7}^{4} - \frac{418201505253}{12642726098}\zeta_{7}^{5} + \frac{4413726337888}{6321363049}\zeta_{7}^{6}\right)\omega_2^6 + \left(\frac{425471891719}{12642726098}\zeta_{7} - \frac{120550819788}{6321363049}\zeta_{7}^{2} + \frac{349074928909}{6321363049}\zeta_{7}^{3} - \frac{333690768712}{6321363049}\zeta_{7}^{4} + \frac{266737301453}{12642726098}\zeta_{7}^{5} - \frac{221726141014}{6321363049}\zeta_{7}^{6}\right)\omega_1\omega_2^6,$
\end{center}
\medskip
where
$\omega_1 = \sqrt[2]{ - 71}$,
$\omega_2 = \sqrt[7]{a}$, and $a=\left(\frac{28984}{343}\zeta_{7} + \frac{6945971}{33614}\zeta_{7}^{2} + \frac{1143959}{16807}\zeta_{7}^{3} + \frac{123772}{16807}\zeta_{7}^{4} + \frac{5818243}{33614}\zeta_{7}^{5} + \frac{384197}{2401}\zeta_{7}^{6}\right) + \left( - \frac{1977412}{117649}\zeta_{7} - \frac{826843}{235298}\zeta_{7}^{2} + \frac{867271}{117649}\zeta_{7}^{3} - \frac{1264430}{117649}\zeta_{7}^{4} - \frac{3199837}{235298}\zeta_{7}^{5} + \frac{685079}{117649}\zeta_{7}^{6}\right)\omega_1$.\\

\noindent $N=78$: The roots of the polynomial $y^{3} + y^{2} - 4$ are
$$y = - \frac{1}{3} + \left(\frac{1}{3}\zeta_{3} - \frac{1}{3}\zeta_{3}^{2}\right)\omega_2 - 53\omega_2^2 + \left( - 6\zeta_{3} + 6\zeta_{3}^{2}\right)\omega_1\omega_2^2,$$ where
$\omega_1 = \sqrt[2]{ - 26}$,
$\omega_2 = \sqrt[3]{\left(\frac{53}{9}\zeta_{3} - \frac{53}{9}\zeta_{3}^{2}\right) + 2\omega_1}$.\\

\noindent $N=87$: The roots of the polynomial $y^{3} + 2y^{2} + 3y + 3$ are
$$y = - \frac{2}{3} + \left(\frac{1}{3}\zeta_{3} - \frac{1}{3}\zeta_{3}^{2}\right)\omega_2 + \frac{43}{50}\omega_2^2 + \left( - \frac{3}{50}\zeta_{3} + \frac{3}{50}\zeta_{3}^{2}\right)\omega_1\omega_2^2,$$ where
$\omega_1 = \sqrt[2]{ - 87}$,
$\omega_2 = \sqrt[3]{\left( - \frac{43}{18}\zeta_{3} + \frac{43}{18}\zeta_{3}^{2}\right) + \frac{1}{2}\omega_1}$.\\

\noindent $N=87$: The roots of the polynomial $y^{3} - 2y^{2}-y-1$ are
$$y = \frac{2}{3} + \left(\frac{1}{3}\zeta_{3} - \frac{1}{3}\zeta_{3}^{2}\right)\omega_2 - \frac{61}{98}\omega_2^2 + \left( - \frac{3}{98}\zeta_{3} + \frac{3}{98}\zeta_{3}^{2}\right)\omega_1\omega_2^2,$$ where
$\omega_1 = \sqrt[2]{ - 87}$,
$\omega_2 = \sqrt[3]{\left(\frac{61}{18}\zeta_{3} - \frac{61}{18}\zeta_{3}^{2}\right) + \frac{1}{2}\omega_1}$.\\

\noindent $N=87$: The roots of the polynomial $y^{6} + 2y^{5} + 7y^{4} + 6y^{3} + 13y^{2} + 4y + 8$ are
$$y = - \frac{1}{3} + \frac{2}{3}\omega_1 + \left(\frac{1}{3}\zeta_{3} - \frac{1}{3}\zeta_{3}^{2}\right)\omega_3 - \frac{269}{1352}\omega_3^2 - \frac{109}{338}\omega_1\omega_3^2 + \left( - \frac{15}{1352}\zeta_{3} + \frac{15}{1352}\zeta_{3}^{2}\right)\omega_2\omega_3^2 + \left(\frac{9}{338}\zeta_{3} - \frac{9}{338}\zeta_{3}^{2}\right)\omega_1\omega_2\omega_3^2,$$ where
$\omega_1 = \sqrt[2]{-1}$,
$\omega_2 = \sqrt[2]{29}$,
$\omega_3 = \sqrt[3]{\left( - \frac{23}{18}\zeta_{3} + \frac{23}{18}\zeta_{3}^{2}\right) + \left(\frac{16}{9}\zeta_{3} - \frac{16}{9}\zeta_{3}^{2}\right)\omega_1 + \frac{1}{2}\omega_2}$.\\

\noindent $N=94$: The roots of the polynomial $y^{4} - 2y^{3} - 3y^{2} + 4y - 4$ are
$$y = \frac{1}{2}+\sqrt[2]{\frac{9}{4} + 2\omega_1},$$ where
$\omega_1 = \sqrt[2]{2}$.\\

\noindent $N=94$: The roots of the polynomial $y^{5} + 4y^{4} + 3y^{3} - 2y^{2} + 2y + 5$ are
\medskip \begin{center} $\displaystyle
y =  - \frac{4}{5} + \left(\frac{1}{11}\zeta_{5} + \frac{6}{11}\zeta_{5}^{2} - \frac{2}{11}\zeta_{5}^{3} + \frac{2}{11}\zeta_{5}^{4}\right)\omega_2 + \left(\frac{120225}{203522}\zeta_{5} - \frac{34445}{101761}\zeta_{5}^{2} - \frac{315635}{203522}\zeta_{5}^{3} - \frac{137780}{101761}\zeta_{5}^{4}\right)\omega_2^2 + \left(\frac{22595}{203522}\zeta_{5} + \frac{21695}{101761}\zeta_{5}^{2} + \frac{27685}{203522}\zeta_{5}^{3} + \frac{3290}{101761}\zeta_{5}^{4}\right)\omega_1\omega_2^2 + \left( - \frac{1484750}{2951069}\zeta_{5} - \frac{4171900}{2951069}\zeta_{5}^{2} - \frac{9577875}{5902138}\zeta_{5}^{3} - \frac{57325}{101761}\zeta_{5}^{4}\right)\omega_2^3 + \left(\frac{8757475}{32461759}\zeta_{5} + \frac{5889450}{32461759}\zeta_{5}^{2} - \frac{5829025}{64923518}\zeta_{5}^{3} - \frac{6317100}{32461759}\zeta_{5}^{4}\right)\omega_1\omega_2^3 + \left(\frac{40976233875}{20710602242}\zeta_{5} + \frac{37317619875}{20710602242}\zeta_{5}^{2} + \frac{11048949500}{10355301121}\zeta_{5}^{3} - \frac{18276875}{20710602242}\zeta_{5}^{4}\right)\omega_2^4 + \left( - \frac{448191875}{20710602242}\zeta_{5} + \frac{3454465625}{20710602242}\zeta_{5}^{2} + \frac{3788259375}{10355301121}\zeta_{5}^{3} + \frac{3261141875}{20710602242}\zeta_{5}^{4}\right)\omega_1\omega_2^4$,
\end{center}
\medskip
where
$\omega_1 = \sqrt[2]{ - 47}$,
$\omega_2 = \sqrt[5]{\left( - \frac{6568}{3125}\zeta_{5} + \frac{2649}{6250}\zeta_{5}^{2} - \frac{9681}{6250}\zeta_{5}^{3} - \frac{3263}{3125}\zeta_{5}^{4}\right) + \left( - \frac{97}{625}\zeta_{5} - \frac{49}{250}\zeta_{5}^{2} - \frac{1}{250}\zeta_{5}^{3} - \frac{178}{625}\zeta_{5}^{4}\right)\omega_1}$.\\

\noindent $N=94$: The roots of the polynomial $y^{5}-y^{3} + 2y^{2} - 2y + 1$ are
\medskip \begin{center} $\displaystyle
y = \left(\frac{4}{11}\zeta_{5} + \frac{2}{11}\zeta_{5}^{2} + \frac{3}{11}\zeta_{5}^{3} + \frac{8}{11}\zeta_{5}^{4}\right)\omega_2 + \left(\frac{13445}{242}\zeta_{5} + \frac{4310}{121}\zeta_{5}^{2} + \frac{2975}{242}\zeta_{5}^{3} + \frac{16935}{242}\zeta_{5}^{4}\right)\omega_2^2 + \left(\frac{1499}{242}\zeta_{5} - \frac{431}{121}\zeta_{5}^{2} + \frac{1457}{242}\zeta_{5}^{3} + \frac{61}{242}\zeta_{5}^{4}\right)\omega_1\omega_2^2 + \left( - \frac{326995}{1331}\zeta_{5} - \frac{270225}{1331}\zeta_{5}^{2} - \frac{35765}{1331}\zeta_{5}^{3} - \frac{946355}{2662}\zeta_{5}^{4}\right)\omega_2^3 + \left( - \frac{51750}{1331}\zeta_{5} + \frac{18125}{1331}\zeta_{5}^{2} - \frac{43350}{1331}\zeta_{5}^{3} - \frac{27425}{2662}\zeta_{5}^{4}\right)\omega_1\omega_2^3 + \left(\frac{206123125}{29282}\zeta_{5} + \frac{112721375}{14641}\zeta_{5}^{2} - \frac{11899875}{29282}\zeta_{5}^{3} + \frac{176397000}{14641}\zeta_{5}^{4}\right)\omega_2^4 + \left(\frac{46170925}{29282}\zeta_{5} - \frac{4426725}{14641}\zeta_{5}^{2} + \frac{34002775}{29282}\zeta_{5}^{3} + \frac{9838750}{14641}\zeta_{5}^{4}\right)\omega_1\omega_2^4$,
\end{center}
\medskip
where
$\omega_1 = \sqrt[2]{ - 47}$,
$\omega_2 = \sqrt[5]{\left( - \frac{1066}{125}\zeta_{5} - \frac{2223}{250}\zeta_{5}^{2} - \frac{273}{250}\zeta_{5}^{3} + \frac{559}{125}\zeta_{5}^{4}\right) + \left( - \frac{199}{625}\zeta_{5} - \frac{1949}{1250}\zeta_{5}^{2} - \frac{2501}{1250}\zeta_{5}^{3} - \frac{676}{625}\zeta_{5}^{4}\right)\omega_1}$.\\

\noindent $N=95$: The roots of the polynomial $y^{3} + y^{2}-y + 3$ are
$$y = - \frac{1}{3} + \left(\frac{1}{3}\zeta_{3} - \frac{1}{3}\zeta_{3}^{2}\right)\omega_2 + \frac{23}{8}\omega_2^2 + \left( - \frac{3}{8}\zeta_{3} + \frac{3}{8}\zeta_{3}^{2}\right)\omega_1\omega_2^2,$$ where
$\omega_1 = \sqrt[2]{ - 19}$,
$\omega_2 = \sqrt[3]{\left( - \frac{46}{9}\zeta_{3} + \frac{46}{9}\zeta_{3}^{2}\right) + 2\omega_1}$.\\

\noindent $N=95$: The roots of the polynomial $y^{4} + y^{3} - 2y^{2} + 2y-1$ are
$$y = - \frac{1}{4} - \frac{1}{4}\omega_1 + \frac{1}{2}\omega_2,$$ where
$\omega_1 = \sqrt[2]{5}$,
$\omega_2 = \sqrt[2]{\frac{7}{2} + \frac{5}{2}\omega_1}$,
$\omega_3 = \sqrt[2]{\frac{7}{2} - \frac{5}{2}\omega_1}$.\\

\noindent $N=95$: The roots of the polynomial $y^{4} + y^{3} - 6y^{2} - 10y - 5$ are
$$y = - \frac{1}{4} - \frac{1}{4}\omega_1 + \frac{1}{2}\omega_2,$$ where
$\omega_1 = \sqrt[2]{5}$,
$\omega_2 = \sqrt[2]{\frac{23}{2} - \frac{11}{2}\omega_1}$,
$\omega_3 = \sqrt[2]{\frac{23}{2} + \frac{11}{2}\omega_1}$.\\

\noindent $N=105$: The roots of the polynomial $y^{3} + y^{2}-y - 5$ are
$$y = - \frac{1}{3} + \left(\frac{1}{3}\zeta_{3} - \frac{1}{3}\zeta_{3}^{2}\right)\omega_2 - \frac{31}{8}\omega_2^2 + \left( - \frac{3}{8}\zeta_{3} + \frac{3}{8}\zeta_{3}^{2}\right)\omega_1\omega_2^2,$$ where
$\omega_1 = \sqrt[2]{ - 35}$,
$\omega_2 = \sqrt[3]{\left(\frac{62}{9}\zeta_{3} - \frac{62}{9}\zeta_{3}^{2}\right) + 2\omega_1}$.\\

\noindent $N=110$: The roots of the polynomial $y^{3} + y^{2} + 3y - 1$ are
$$y = - \frac{1}{3} + \left(\frac{1}{3}\zeta_{3} - \frac{1}{3}\zeta_{3}^{2}\right)\omega_2 - \frac{13}{32}\omega_2^2 + \left( - \frac{3}{32}\zeta_{3} + \frac{3}{32}\zeta_{3}^{2}\right)\omega_1\omega_2^2,$$ where
$\omega_1 = \sqrt[2]{ - 11}$,
$\omega_2 = \sqrt[3]{\left(\frac{26}{9}\zeta_{3} - \frac{26}{9}\zeta_{3}^{2}\right) + 2\omega_1}$.\\

\noindent $N=110$: The roots of the polynomial $y^{3}-y^{2} - 8$ are
$$y = \frac{1}{3} + \left(\frac{1}{3}\zeta_{3} - \frac{1}{3}\zeta_{3}^{2}\right)\omega_2 - 109\omega_2^2 + \left( - 6\zeta_{3} + 6\zeta_{3}^{2}\right)\omega_1\omega_2^2,$$ where
$\omega_1 = \sqrt[2]{ - 110}$,
$\omega_2 = \sqrt[3]{\left(\frac{109}{9}\zeta_{3} - \frac{109}{9}\zeta_{3}^{2}\right) + 2\omega_1}$.\\

\noindent $N=119$: The roots of the polynomial $y^{5} + 2y^{4} + 3y^{3} + 6y^{2} + 4y + 1$ are
\medskip \begin{center} $\displaystyle
y = - \frac{2}{5} + \left(\frac{1}{11}\zeta_{5} + \frac{6}{11}\zeta_{5}^{2} - \frac{2}{11}\zeta_{5}^{3} + \frac{2}{11}\zeta_{5}^{4}\right)\omega_2 + \left( - \frac{919995}{242}\zeta_{5} - \frac{189535}{242}\zeta_{5}^{2} - \frac{451475}{242}\zeta_{5}^{3} - \frac{379070}{121}\zeta_{5}^{4}\right)\omega_2^2 + \left(\frac{2155}{242}\zeta_{5} - \frac{49165}{242}\zeta_{5}^{2} + \frac{31715}{242}\zeta_{5}^{3} - \frac{23915}{121}\zeta_{5}^{4}\right)\omega_1\omega_2^2 + \left( - \frac{39112100}{1331}\zeta_{5} - \frac{200177425}{1331}\zeta_{5}^{2} + \frac{199089075}{2662}\zeta_{5}^{3} - \frac{224350400}{1331}\zeta_{5}^{4}\right)\omega_2^3 + \left(\frac{30054325}{1331}\zeta_{5} + \frac{3632400}{1331}\zeta_{5}^{2} + \frac{32659325}{2662}\zeta_{5}^{3} + \frac{22207050}{1331}\zeta_{5}^{4}\right)\omega_1\omega_2^3 + \left(\frac{1982313379875}{29282}\zeta_{5} + \frac{64837018875}{29282}\zeta_{5}^{2} + \frac{592532780000}{14641}\zeta_{5}^{3} + \frac{644987037875}{14641}\zeta_{5}^{4}\right)\omega_2^4 +\left(\frac{48932020625}{29282}\zeta_{5} + \frac{116156561875}{29282}\zeta_{5}^{2} - \frac{20773526250}{14641}\zeta_{5}^{3} + \frac{73199106875}{14641}\zeta_{5}^{4}\right)\omega_1\omega_2^4$,
\end{center}
\medskip
where
$\omega_1 = \sqrt[2]{ - 119}$,
$\omega_2 = \sqrt[5]{\left( - \frac{28853}{6250}\zeta_{5} - \frac{11994}{3125}\zeta_{5}^{2} - \frac{3339}{3125}\zeta_{5}^{3} + \frac{10127}{6250}\zeta_{5}^{4}\right) + \left(\frac{179}{1250}\zeta_{5} + \frac{337}{625}\zeta_{5}^{2} + \frac{388}{625}\zeta_{5}^{3} + \frac{521}{1250}\zeta_{5}^{4}\right)\omega_1}$.\\

\noindent $N=119$: The roots of the polynomial $y^{5} - 2y^{4} + 3y^{3} - 6y^{2} - 7$ are
\medskip \begin{center} $\displaystyle
y = \frac{2}{5} + \left(\frac{4}{11}\zeta_{5} + \frac{2}{11}\zeta_{5}^{2} + \frac{3}{11}\zeta_{5}^{3} + \frac{8}{11}\zeta_{5}^{4}\right)\omega_2 + \left( - \frac{19450}{43681}\zeta_{5} + \frac{14420}{43681}\zeta_{5}^{2} + \frac{51895}{87362}\zeta_{5}^{3} - \frac{69165}{87362}\zeta_{5}^{4}\right)\omega_2^2 + \left(\frac{240}{2299}\zeta_{5} - \frac{580}{43681}\zeta_{5}^{2} + \frac{185}{87362}\zeta_{5}^{3} - \frac{5245}{87362}\zeta_{5}^{4}\right)\omega_1\omega_2^2 + \left( - \frac{14010250}{9129329}\zeta_{5} - \frac{14410050}{9129329}\zeta_{5}^{2} - \frac{7904675}{9129329}\zeta_{5}^{3} - \frac{66744275}{18258658}\zeta_{5}^{4}\right)\omega_2^3 + \left(\frac{231300}{829939}\zeta_{5} + \frac{72475}{829939}\zeta_{5}^{2} + \frac{255150}{829939}\zeta_{5}^{3} + \frac{153275}{1659878}\zeta_{5}^{4}\right)\omega_1\omega_2^3 + \left( - \frac{97739257375}{3816059522}\zeta_{5} - \frac{5700115375}{200845238}\zeta_{5}^{2} + \frac{3054410625}{3816059522}\zeta_{5}^{3} - \frac{82637207500}{1908029761}\zeta_{5}^{4}\right)\omega_2^4 + \left(\frac{13549946875}{3816059522}\zeta_{5} - \frac{2597806875}{3816059522}\zeta_{5}^{2} + \frac{9974059375}{3816059522}\zeta_{5}^{3} + \frac{2885295625}{1908029761}\zeta_{5}^{4}\right)\omega_1\omega_2^4$,
\end{center}
\medskip
where
$\omega_1 = \sqrt[2]{ - 119}$,
$\omega_2 = \sqrt[5]{\left(\frac{56383}{6250}\zeta_{5} - \frac{4096}{3125}\zeta_{5}^{2} + \frac{19999}{3125}\zeta_{5}^{3} + \frac{25403}{6250}\zeta_{5}^{4}\right) + \left( - \frac{607}{1250}\zeta_{5} - \frac{359}{625}\zeta_{5}^{2} + \frac{34}{625}\zeta_{5}^{3} - \frac{1093}{1250}\zeta_{5}^{4}\right)\omega_1}$.\\

\medskip

\section{Approximate values of the Haupmoduli at elliptic points}\label{sec A3}

Here we give the numerical value of $j_{N}(e)$ for each elliptic point $e$.  We state the
order of the elliptic point if the order is greater than two.  As in section~\ref{sec 5.2},
one identifies each $j_{N}(e)$ with the exact value of a radical by numerically evaluating
the expressions in Appendix~\ref{sec A2} and comparing with the approximations given below.

\medskip

\begin{tabbing}
\qquad\qquad\qquad\qquad\qquad\ \= \qquad \qquad \= \kill \\

$j_{1}(i)                   $\>$ =\ \  984$ \\
$j_{1}(1/2+i\sqrt{3}/2)     $\>$ =    -744$ \> (at elliptic point of order 3) \\

$j_{2}(i\sqrt{2}/2)         $\>$ =\ \  152$ \\
$j_{2}(1/2+i/2)             $\>$ =    -104$ \> (at elliptic point of order 4) \\

$j_{3}(i\sqrt{3}/3)         $\>$ =\ \   66$ \\
$j_{3}(1/2+i\sqrt{3}/6)     $\>$ =     -42$ \> (at elliptic point of order 6) \\

$j_{5}(i\sqrt{5}/5)         $\>$ =\ \   28.36067977499789696409173669$ \\
$j_{5}(2/5+i/5)             $\>$ =     -16$ \\
$j_{5}(1/2+i\sqrt{5}/10)    $\>$ =     -16.36067977499789696409173669$ \\

$j_{6}(i\sqrt{6}/6)          $\>$ =\ \  22$ \\
$j_{6}(1/3+i\sqrt{2}/6)      $\>$ =    -10$ \\
$j_{6}(1/2+i\sqrt{3}/6)      $\>$ =    -14$ \\

$j_{7}(i\sqrt{7}/7)          $\>$ =\ \  18$ \\
$j_{7}(5/14+i\sqrt{3}/14)    $\>$ =     -9$ \> (at elliptic point of order 3) \\
$j_{7}(1/2+i\sqrt{7}/14)     $\>$ =    -10$ \\

$j_{10}(i\sqrt{10}/10)       $\>$ =\ \  12$ \\
$j_{10}(3/10+i/10)           $\>$ =     -4$ \> (at elliptic point of order 4) \\
$j_{10}(1/2+i\sqrt{5}/10)    $\>$ =     -8$ \\

$j_{11}(i\sqrt{11}/11)       $\>$ =\ \  10.82750081414703474747183074$ \\
$j_{11}(1/3+i\sqrt{11}/33)   $\>$ =     -4.413750407073517373735915369-0.3139738780838152329405829238i$ \\
$j_{11}(1/2+i\sqrt{11}/22)   $\>$ =     -6$ \\
$j_{11}(2/3+i\sqrt{11}/33)   $\>$ =     -4.413750407073517373735915369+0.3139738780838152329405829238i$ \\

$j_{13}(i\sqrt{13}/13)       $\>$ =\ \   9.211102550927978586238442535$ \\
$j_{13}(7/26+i\sqrt{3}/26)   $\>$ =     -3$ \> (at elliptic point of order 3) \\
$j_{13}(5/13+i/13)           $\>$ =     -4$ \\
$j_{13}(1/2+i\sqrt{13}/26)   $\>$ =     -5.211102550927978586238442535$ \\

$j_{14}(i\sqrt{14}/14)       $\>$ =\ \   8.656854249492380195206754897$ \\
$j_{14}(1/4+i\sqrt{7}/28)    $\>$ =     -2$ \\
$j_{14}(1/3+i\sqrt{14}/42)   $\>$ =     -2.656854249492380195206754897$ \\
$j_{14}(1/2+i\sqrt{7}/14)    $\>$ =     -6$ \\

$j_{15}(i\sqrt{15}/15)       $\>$ =\ \   8$ \\
$j_{15}(1/3+i\sqrt{5}/15)    $\>$ =     -3-2i$ \\
$j_{15}(1/2+i\sqrt{15}/30)   $\>$ =     -4$ \\
$j_{15}(2/3+i\sqrt{5}/15)    $\>$ =     -3+2i$ \\

$j_{17}(i\sqrt{17}/17)       $\>$ =\ \   7.261765309064618299064762001$ \\
$j_{17}(4/17+i/17)           $\>$ =     -2$ \\
$j_{17}(1/3+i\sqrt{17}/51)   $\>$ =     -2.561552812808830274910704928-0.7017282255051753291172214932i$ \\
$j_{17}(1/2+i\sqrt{17}/34)   $\>$ =     -4.138659683446957749243352145$ \\
$j_{17}(2/3+i\sqrt{17}/51)   $\>$ =     -2.561552812808830274910704928+0.7017282255051753291172214932i$ \\

$j_{19}(i\sqrt{19}/19)       $\>$ =\ \   6.668980143244541623355874955$ \\
$j_{19}(1/4+i\sqrt{19}/76)   $\>$ =     -1.334490071622270811677937478-0.1360581741513788488229955061i$ \\
$j_{19}(15/38+i\sqrt{3}/38)  $\>$ =     -3$ \> (at elliptic point of order 3) \\
$j_{19}(1/2+i\sqrt{19}/38)   $\>$ =     -4$ \\
$j_{19}(3/4+i\sqrt{19}/76)   $\>$ =     -1.334490071622270811677937478+0.1360581741513788488229955061i$ \\

$j_{21}(i\sqrt{21}/21)       $\>$ =\ \   6.291502622129181181003231507$ \\
$j_{21}(3/14+i\sqrt{3}/42)   $\>$ =\ \   0$ \> (at elliptic point of order 6) \\
$j_{21}(3/7+i\sqrt{3}/21)    $\>$ =     -4$ \\
$j_{21}(1/2+i\sqrt{21}/42)   $\>$ =     -4.291502622129181181003231507$ \\

$j_{22}(i\sqrt{22}/22)       $\>$ =\ \   6$ \\
$j_{22}(1/4+i\sqrt{11}/44)   $\>$ =     -0.8085121160468812529386457318-0.5088517788327379904864224393i$ \\
$j_{22}(4/11+i\sqrt{2}/22)   $\>$ =     -2$ \\
$j_{22}(1/2+i\sqrt{11}/22)   $\>$ =     -4.382975767906237494122708536$ \\
$j_{22}(3/4+i\sqrt{11}/44)   $\>$ =     -0.8085121160468812529386457318+0.5088517788327379904864224393i$ \\

$j_{23}(i\sqrt{23}/23)       $\>$ =\ \   5.729031537980930837932235460$ \\
$j_{23}(1/4+i\sqrt{23}/92)   $\>$ =     -1.337641021377626987019545573-0.5622795120623012438991821449i$ \\
$j_{23}(1/3+i\sqrt{23}/69)   $\>$ =     -1.864515768990465418966117730-0.9419767695671594951043760061i$ \\
$j_{23}(1/2+i\sqrt{23}/46)   $\>$ =     -3.324717957244746025960908855$ \\
$j_{23}(2/3+i\sqrt{23}/69)   $\>$ =     -1.864515768990465418966117730+0.9419767695671594951043760061i$ \\
$j_{23}(3/4+i\sqrt{23}/92)   $\>$ =     -1.337641021377626987019545573+0.5622795120623012438991821449i$ \\

$j_{26}(i\sqrt{26}/26)       $\>$ =\ \   5.357355625885887120858272705$ \\
$j_{26}(5/26+i/26)           $\>$ =\ \   0$ \> (at elliptic point of order 4) \\
$j_{26}(1/3+i\sqrt{26}/78)   $\>$ =     -1.678677812942943560429136352-0.4105957266856133191677959858i$ \\
$j_{26}(1/2+i\sqrt{13}/26)   $\>$ =     -4$ \\
$j_{26}(2/3+i\sqrt{26}/78)   $\>$ =     -1.678677812942943560429136352+0.4105957266856133191677959858i$ \\

$j_{29}(i\sqrt{29}/29)       $\>$ =\ \   4.924500920878021256173037168$ \\
$j_{29}(1/5+i\sqrt{29}/145)  $\>$ =     -0.2696680568717586124611633383-0.07411556647545578849796605802i$ \\
$j_{29}(1/3+i\sqrt{29}/87)   $\>$ =     -1.667896840209630800525821959-0.7898065632614205290689810980i$ \\
$j_{29}(12/29+i/29)          $\>$ =     -2$ \\
$j_{29}(1/2+i\sqrt{29}/58)   $\>$ =     -3.049371126715242430199066574$ \\
$j_{29}(2/3+i\sqrt{29}/87)   $\>$ =     -1.667896840209630800525821959+0.7898065632614205290689810980i$ \\
$j_{29}(4/5+i\sqrt{29}/145)  $\>$ =     -0.2696680568717586124611633383+0.07411556647545578849796605802i$ \\

$j_{30}(i\sqrt{30}/30)       $\>$ =\ \   5$ \\
$j_{30}(1/6+i\sqrt{5}/30)    $\>$ =\ \   1$ \\
$j_{30}(1/4+i\sqrt{15}/60)   $\>$ =\ \   0$ \\
$j_{30}(2/5+i\sqrt{6}/30)    $\>$ =     -3$ \\
$j_{30}(1/2+i\sqrt{15}/30)   $\>$ =     -4$ \\

$j_{31}(i\sqrt{31}/31)       $\>$ =\ \   4.761369303286342734709161155$ \\
$j_{31}(11/62+i\sqrt{3}/62)  $\>$ =\ \   0$ \> (at elliptic point of order 3) \\
$j_{31}(1/4+i\sqrt{31}/124)  $\>$ =     -0.4260504821476063229868925175-0.3689894074818040877620193866i$ \\
$j_{31}(2/5+i\sqrt{31}/155)  $\>$ =     -2.380684651643171367354580578-0.05457317755183967280185952747i$ \\
$j_{31}(1/2+i\sqrt{31}/62)   $\>$ =     -3.147899035704787354026214965$ \\
$j_{31}(3/5+i\sqrt{31}/155)  $\>$ =     -2.380684651643171367354580578+0.05457317755183967280185952747i$ \\
$j_{31}(3/4+i\sqrt{31}/124)  $\>$ =     -0.4260504821476063229868925175+0.3689894074818040877620193866i$ \\

$j_{33}(i\sqrt{33}/33)       $\>$ =\ \   4.464101615137754587054892683$ \\
$j_{33}(1/6+i\sqrt{11}/66)   $\>$ =\ \   0$ \\
$j_{33}(2/9+i\sqrt{11}/99)   $\>$ =     -0.7044022574779152290190034072$ \\
$j_{33}(1/3+i\sqrt{11}/33)   $\>$ =     -1.647798871261042385490498296-1.721433237247136729005106833i$ \\
$j_{33}(1/2+i\sqrt{33}/66)   $\>$ =     -2.464101615137754587054892683$ \\
$j_{33}(2/3+i\sqrt{11}/33)   $\>$ =     -1.647798871261042385490498296+1.721433237247136729005106833i$ \\

$j_{34}(i\sqrt{34}/34)       $\>$ =\ \   4.561552812808830274910704928$ \\
$j_{34}(1/6+i\sqrt{17}/102)  $\>$ =\ \   0.5615528128088302749107049280$ \\
$j_{34}(1/5+i\sqrt{34}/170)  $\>$ =\ \   0.4384471871911697250892950720$ \\
$j_{34}(5/17+i\sqrt{2}/34)   $\>$ =     -1$ \\
$j_{34}(13/34+i/34)          $\>$ =     -2$ \> (at elliptic point of order 4) \\
$j_{34}(1/2+i\sqrt{17}/34)   $\>$ =     -3.561552812808830274910704928$ \\

$j_{35}(i\sqrt{35}/35)       $\>$ =\ \   4.136147010473344946585953198$ \\
$j_{35}(2/7+i\sqrt{5}/35)    $\>$ =     -1-2i$ \\
$j_{35}(1/3+i\sqrt{35}/105)  $\>$ =     -1.068073505236672473292976599-1.922143870450998929819659750i$ \\
$j_{35}(1/2+i\sqrt{35}/70)   $\>$ =     -2$ \\
$j_{35}(2/3+i\sqrt{35}/105)  $\>$ =     -1.068073505236672473292976599+1.922143870450998929819659750i$ \\
$j_{35}(5/7+i\sqrt{5}/35)    $\>$ =     -1+2i$ \\

$j_{38}(i\sqrt{38}/38)       $\>$ =\ \   4.150770243157541006119639149$ \\
$j_{38}(3/19+i\sqrt{2}/38)   $\>$ =\ \   0$ \\
$j_{38}(1/4+i\sqrt{19}/76)   $\>$ =     -0.4348022826163606035624719868-1.043427435893032154471565986i$ \\
$j_{38}(1/3+i\sqrt{38}/114)  $\>$ =     -1.075385121578770503059819575-0.8780090918971414962402883482i$ \\
$j_{38}(1/2+i\sqrt{19}/38)   $\>$ =     -3.130395434767278792875056027$ \\
$j_{38}(2/3+i\sqrt{38}/114)  $\>$ =     -1.075385121578770503059819575+0.8780090918971414962402883482i$ \\
$j_{38}(3/4+i\sqrt{19}/76)   $\>$ =     -0.4348022826163606035624719868+1.043427435893032154471565986i$ \\

$j_{39}(i\sqrt{39}/39)       $\>$ =\ \   4.302775637731994646559610634$ \\
$j_{39}(2/13+i\sqrt{3}/39)   $\>$ =\ \   1$ \\
$j_{39}(1/5+i\sqrt{39}/195)  $\>$ =\ \   0.6972243622680053534403893663$ \\
$j_{39}(1/4+i\sqrt{39}/156)  $\>$ =\ \   0.3027756377319946465596106337$ \\
$j_{39}(11/26+i\sqrt{3}/78)  $\>$ =     -3$ \> (at elliptic point of order 6) \\
$j_{39}(1/2+i\sqrt{39}/78)   $\>$ =     -3.302775637731994646559610634$ \\

$j_{41}(i\sqrt{41}/41)       $\>$ =\ \   4.031464197748130120960449213$ \\
$j_{41}(1/6+i\sqrt{41}/246)  $\>$ =\ \   0.4326235943581880128425063252-0.03675155710672643686408328572i$ \\
$j_{41}(9/41+i/41)           $\>$ =\ \   0$ \\
$j_{41}(1/3+i\sqrt{41}/123)  $\>$ =     -1.432623594358188012842506325-0.8008414936741550679635492458i$ \\
$j_{41}(2/5+i\sqrt{41}/205)  $\>$ =     -1.693897202308099089470896574-0.2949750344960732524774427655i$ \\
$j_{41}(1/2+i\sqrt{41}/82)   $\>$ =     -2.643669793131931942018656064$ \\
$j_{41}(3/5+i\sqrt{41}/205)  $\>$ =     -1.693897202308099089470896574+0.2949750344960732524774427655i$ \\
$j_{41}(2/3+i\sqrt{41}/123)  $\>$ =     -1.432623594358188012842506325+0.8008414936741550679635492458i$ \\
$j_{41}(5/6+i\sqrt{41}/246)  $\>$ =\ \   0.4326235943581880128425063252+0.03675155710672643686408328572i$ \\

$j_{42}(i\sqrt{42}/42)       $\>$ =\ \   4$ \\
$j_{42}(1/7+i\sqrt{6}/42)    $\>$ =\ \   1$ \\
$j_{42}(3/14+i\sqrt{3}/42)   $\>$ =\ \   0$ \\
$j_{42}(1/3+i\sqrt{14}/42)   $\>$ =     -1.500000000000000000000000000-1.322875655532295295250807877i$ \\
$j_{42}(1/2+i\sqrt{21}/42)   $\>$ =     -3$ \\
$j_{42}(2/3+i\sqrt{14}/42)   $\>$ =     -1.500000000000000000000000000+1.322875655532295295250807877i$ \\

$j_{46}(i\sqrt{46}/46)       $\>$ =\ \   3.828427124746190097603377448$ \\
$j_{46}(1/6+i\sqrt{23}/138)  $\>$ =\ \   0.5397978117457193930052088754-0.1825822545574429926939882837i$ \\
$j_{46}(1/4+i\sqrt{23}/92)   $\>$ =     -0.1225611668766536199752455518-0.7448617666197442365931704286i$ \\
$j_{46}(3/8+i\sqrt{23}/184)  $\>$ =     -1.754877666246692760049508896$ \\
$j_{46}(2/5+i\sqrt{46}/230)  $\>$ =     -1.828427124746190097603377448$ \\
$j_{46}(1/2+i\sqrt{23}/46)   $\>$ =     -3.079595623491438786010417751$ \\
$j_{46}(3/4+i\sqrt{23}/92)   $\>$ =     -0.1225611668766536199752455518+0.7448617666197442365931704286i$ \\
$j_{46}(5/6+i\sqrt{23}/138)  $\>$ =\ \   0.5397978117457193930052088754+0.1825822545574429926939882837i$ \\

$j_{47}(i\sqrt{47}/47)       $\>$ =\ \   3.585625494439901516579962811$ \\
$j_{47}(1/6+i\sqrt{47}/282)  $\>$ =     -0.3627686404360593234652810510-0.3432609660330357799566369234i$ \\
$j_{47}(1/4+i\sqrt{47}/188)  $\>$ =     -0.5870496965473447751879045292-1.250665180042536212344253698i$ \\
$j_{47}(2/7+i\sqrt{47}/329)  $\>$ =     -0.7564040798852488091169062879-1.125144262819566624706178489i$ \\
$j_{47}(1/3+i\sqrt{47}/141)  $\>$ =     -1.036408667334701949173075118-1.344877518150426533127664948i$ \\
$j_{47}(1/2+i\sqrt{47}/94)   $\>$ =     -2.100363326033191802693628840$ \\
$j_{47}(2/3+i\sqrt{47}/141)  $\>$ =     -1.036408667334701949173075118+1.344877518150426533127664948i$ \\
$j_{47}(5/7+i\sqrt{47}/329)  $\>$ =     -0.7564040798852488091169062879+1.125144262819566624706178489i$ \\
$j_{47}(3/4+i\sqrt{47}/188)  $\>$ =     -0.5870496965473447751879045292+1.250665180042536212344253698i$ \\
$j_{47}(5/6+i\sqrt{47}/282)  $\>$ =     -0.3627686404360593234652810510+0.3432609660330357799566369234i$ \\

$j_{51}(i\sqrt{51}/51)       $\>$ =\ \   3.479815748755145556651390591$ \\
$j_{51}(1/6+i\sqrt{17}/102)  $\>$ =\ \   0.3002425902201204191589098208-0.3751894661561734131203955526i$ \\
$j_{51}(1/4+i\sqrt{51}/204)  $\>$ =     -0.7399078743775727783256952956-0.7759010795250288681466316105i$ \\
$j_{51}(1/3+i\sqrt{17}/51)   $\>$ =     -1.300242590220120419158909821-1.624810533843826586879604448i$ \\
$j_{51}(1/2+i\sqrt{51}/102)  $\>$ =     -2$ \\
$j_{51}(2/3+i\sqrt{17}/51)   $\>$ =     -1.300242590220120419158909821+1.624810533843826586879604448i$ \\
$j_{51}(3/4+i\sqrt{51}/204)  $\>$ =     -0.7399078743775727783256952956+0.7759010795250288681466316105i$ \\
$j_{51}(5/6+i\sqrt{17}/102)  $\>$ =\ \   0.3002425902201204191589098208+0.3751894661561734131203955526i$ \\

$j_{55}(i\sqrt{55}/55)       $\>$ =\ \   3.618033988749894848204586834$ \\
$j_{55}(2/15+i\sqrt{11}/165) $\>$ =\ \   1.382975767906237494122708537$ \\
$j_{55}(1/7+i\sqrt{55}/385)  $\>$ =\ \   1.381966011250105151795413166$ \\
$j_{55}(1/4+i\sqrt{55}/220)  $\>$ =     -0.3819660112501051517954131656$ \\
$j_{55}(3/10+i\sqrt{11}/110) $\>$ =     -1$ \\
$j_{55}(2/5+i\sqrt{11}/55)   $\>$ =     -2.191487883953118747061354268-0.5088517788327379904864224393i$ \\
$j_{55}(1/2+i\sqrt{55}/110)  $\>$ =     -2.618033988749894848204586834$ \\
$j_{55}(3/5+i\sqrt{11}/55)   $\>$ =     -2.191487883953118747061354268+0.5088517788327379904864224393i$ \\

$j_{59}(i\sqrt{59}/59)       $\>$ =\ \   3.250604776417835784425587796$ \\
$j_{59}(1/6+i\sqrt{59}/354)  $\>$ =\ \   0.1027847152002951558510143089-0.6654569511528134767061906116i$ \\
$j_{59}(1/5+i\sqrt{59}/295)  $\>$ =\ \   0.07010628989751548539073292583-0.8513902924458037909600187007i$ \\
$j_{59}(1/4+i\sqrt{59}/236)  $\>$ =     -0.1930341320501256966883920109-1.006043605799810240847794450i$ \\
$j_{59}(1/3+i\sqrt{59}/177)  $\>$ =     -0.9318210843800659261856599331-0.9605029196296829425065118808i$ \\
$j_{59}(3/7+i\sqrt{59}/413)  $\>$ =     -1.570553461676241754729474880-0.02768497922071816524290388412i$ \\
$j_{59}(1/2+i\sqrt{59}/118)  $\>$ =     -2.205569430400590311702028618$ \\
$j_{59}(4/7+i\sqrt{59}/413)  $\>$ =     -1.570553461676241754729474880+0.02768497922071816524290388412i$ \\
$j_{59}(2/3+i\sqrt{59}/177)  $\>$ =     -0.9318210843800659261856599331+0.9605029196296829425065118808i$ \\
$j_{59}(3/4+i\sqrt{59}/236)  $\>$ =     -0.1930341320501256966883920109+1.006043605799810240847794450i$ \\
$j_{59}(4/5+i\sqrt{59}/295)  $\>$ =\ \   0.07010628989751548539073292583+0.8513902924458037909600187007i$ \\
$j_{59}(5/6+i\sqrt{59}/354)  $\>$ =\ \   0.1027847152002951558510143089+0.6654569511528134767061906116i$ \\

$j_{62}(i\sqrt{62}/62)       $\>$ =\ \   3.204257578045853033477539268$ \\
$j_{62}(1/8+i\sqrt{31}/248)  $\>$ =\ \   0.6823278038280193273694837397$ \\
$j_{62}(1/7+i\sqrt{62}/434)  $\>$ =\ \   0.6241695467003370641258381804$ \\
$j_{62}(1/4+i\sqrt{31}/124)  $\>$ =     -0.3411639019140096636847418699-1.161541399997251936087917687i$ \\
$j_{62}(1/3+i\sqrt{62}/186)  $\>$ =     -0.9142135623730950488016887242-1.078987285547468834863038539i$ \\
$j_{62}(3/8+i\sqrt{31}/248)  $\>$ =     -0.7672143840616159866716343874-0.7925519925154478483258983006i$ \\
$j_{62}(1/2+i\sqrt{31}/62)   $\>$ =     -2.465571231876768026656731225$ \\
$j_{62}(5/8+i\sqrt{31}/248)  $\>$ =     -0.7672143840616159866716343874+0.7925519925154478483258983006i$ \\
$j_{62}(2/3+i\sqrt{62}/186)  $\>$ =     -0.9142135623730950488016887242+1.078987285547468834863038539i$ \\
$j_{62}(3/4+i\sqrt{31}/124)  $\>$ =     -0.3411639019140096636847418699+1.161541399997251936087917687i$ \\

$j_{66}(i\sqrt{66}/66)       $\>$ =\ \   3.372281323269014329925305734$ \\
$j_{66}(4/33+i\sqrt{2}/66)   $\>$ =\ \   1$ \\
$j_{66}(1/6+i\sqrt{11}/66)   $\>$ =\ \   1.191487883953118747061354268-0.5088517788327379904864224393i$ \\
$j_{66}(3/11+i\sqrt{6}/66)   $\>$ =\ \   0$ \\
$j_{66}(2/5+i\sqrt{66}/330)  $\>$ =     -2.372281323269014329925305734$ \\
$j_{66}(5/12+i\sqrt{11}/132) $\>$ =     -2.382975767906237494122708537$ \\
$j_{66}(1/2+i\sqrt{33}/66)   $\>$ =     -3$ \\
$j_{66}(5/6+i\sqrt{11}/66)   $\>$ =\ \   1.191487883953118747061354268+0.5088517788327379904864224393i$ \\

$j_{69}(i\sqrt{69}/69)       $\>$ =\ \   3.091158353929329921425925022$ \\
$j_{69}(1/6+i\sqrt{23}/138)  $\>$ =\ \   0.6623589786223730129804544272-0.5622795120623012438991821449i$ \\
$j_{69}(1/5+i\sqrt{69}/345)  $\>$ =\ \   0.5000000000000000000000000000-0.4627111573517053057910121464i$ \\
$j_{69}(1/3+i\sqrt{23}/69)   $\>$ =     -1.215079854500973367044300021-1.307141278682045480492352574i$ \\
$j_{69}(5/12+i\sqrt{23}/276) $\>$ =     -1.324717957244746025960908855$ \\
$j_{69}(4/9+i\sqrt{23}/207)  $\>$ =     -1.569840290998053265911399958$ \\
$j_{69}(1/2+i\sqrt{69}/138)  $\>$ =     -2.091158353929329921425925022$ \\
$j_{69}(2/3+i\sqrt{23}/69)   $\>$ =     -1.215079854500973367044300021+1.307141278682045480492352574i$ \\
$j_{69}(4/5+i\sqrt{69}/345)  $\>$ =\ \   0.5000000000000000000000000000+0.4627111573517053057910121464i$ \\
$j_{69}(5/6+i\sqrt{23}/138)  $\>$ =\ \   0.6623589786223730129804544272+0.5622795120623012438991821449i$ \\

$j_{70}(i\sqrt{70}/70)       $\>$ =\ \   3$ \\
$j_{70}(1/5+i\sqrt{14}/70)   $\>$ =\ \   0.5000000000000000000000000000-1.322875655532295295250807877i$ \\
$j_{70}(1/4+i\sqrt{35}/140)  $\>$ =\ \   0.2971565081774243724678302298-1.205625150602912946591254240i$ \\
$j_{70}(5/14+i\sqrt{5}/70)   $\>$ =     -1$ \\
$j_{70}(3/7+i\sqrt{10}/70)   $\>$ =     -2$ \\
$j_{70}(1/2+i\sqrt{35}/70)   $\>$ =     -2.594313016354848744935660460$ \\
$j_{70}(3/4+i\sqrt{35}/140)  $\>$ =\ \   0.2971565081774243724678302298+1.205625150602912946591254240i$ \\
$j_{70}(4/5+i\sqrt{14}/70)   $\>$ =\ \   0.5000000000000000000000000000+1.322875655532295295250807877i$ \\

$j_{71}(i\sqrt{71}/71)       $\>$ =\ \   3.070135611475866755721739857$ \\
$j_{71}(1/8+i\sqrt{71}/568)  $\>$ =\ \   0.8763529352828540784184526952-0.01624652480075642036729645049i$ \\
$j_{71}(1/6+i\sqrt{71}/426)  $\>$ =\ \   0.6110278612229944404427962377-0.1571340927617910068858071215i$ \\
$j_{71}(1/4+i\sqrt{71}/284)  $\>$ =     -0.4483036287186081843216160178-0.7133079298898067087253786821i$ \\
$j_{71}(1/3+i\sqrt{71}/213)  $\>$ =     -1.121100466735809900767738347-1.004964301540170776567151801i$ \\
$j_{71}(3/8+i\sqrt{71}/568)  $\>$ =     -1.141263470404508727609883652-0.6558035915274742285685986381i$ \\
$j_{71}(2/5+i\sqrt{71}/355)  $\>$ =     -1.290320274284977555511584278-0.6264473428891745139938991768i$ \\
$j_{71}(1/2+i\sqrt{71}/142)  $\>$ =     -2.042921524199755057022593135$ \\
$j_{71}(3/5+i\sqrt{71}/355)  $\>$ =     -1.290320274284977555511584278+0.6264473428891745139938991768i$ \\
$j_{71}(5/8+i\sqrt{71}/568)  $\>$ =     -1.141263470404508727609883652+0.6558035915274742285685986381i$ \\
$j_{71}(2/3+i\sqrt{71}/213)  $\>$ =     -1.121100466735809900767738347+1.004964301540170776567151801i$ \\
$j_{71}(3/4+i\sqrt{71}/284)  $\>$ =     -0.4483036287186081843216160178+0.7133079298898067087253786821i$ \\
$j_{71}(5/6+i\sqrt{71}/426)  $\>$ =\ \   0.6110278612229944404427962377+0.1571340927617910068858071215i$ \\
$j_{71}(7/8+i\sqrt{71}/568)  $\>$ =\ \   0.8763529352828540784184526952+0.01624652480075642036729645049i$ \\

$j_{78}(i\sqrt{78}/78)$\>$ =\ \  3$ \\
$j_{78}(1/9+i\sqrt{26}/234)  $\>$ =\ \   1.314596212276751981650111040$ \\
$j_{78}(1/8+i\sqrt{39}/312)  $\>$ =\ \   1.302775637731994646559610634$ \\
$j_{78}(1/4+i\sqrt{39}/156)  $\>$ =     -0.5000000000000000000000000000-0.8660254037844386467637231707i$ \\
$j_{78}(1/3+i\sqrt{26}/78)   $\>$ =     -1.157298106138375990825055520-1.305151526504743953668268182i$ \\
$j_{78}(11/26+i\sqrt{3}/78)  $\>$ =     -1$ \\
$j_{78}(1/2+i\sqrt{39}/78)   $\>$ =     -2.302775637731994646559610634$ \\
$j_{78}(2/3+i\sqrt{26}/78)   $\>$ =     -1.157298106138375990825055520+1.305151526504743953668268182i$ \\
$j_{78}(3/4+i\sqrt{39}/156)  $\>$ =     -0.5000000000000000000000000000+0.8660254037844386467637231707i$ \\

$j_{87}(i\sqrt{87}/87)       $\>$ =\ \   2.546818276884082079135997509$ \\
$j_{87}(1/6+i\sqrt{29}/174)  $\>$ =\ \   0.1353981908291098795018506027-1.060602944919640288072682303i$ \\
$j_{87}(2/9+i\sqrt{29}/261)  $\>$ =     -0.2419441275215273555706181970-1.347810384779931028708183550i$ \\
$j_{87}(1/4+i\sqrt{87}/348)  $\>$ =     -0.3036766091486795941175602026-1.435949864109956088410719015i$ \\
$j_{87}(1/3+i\sqrt{29}/87)   $\>$ =     -0.8934540633075825239312324056-1.712792560139709259364498753i$ \\
$j_{87}(3/7+i\sqrt{87}/609)  $\>$ =     -0.2734091384420410395679987544-0.5638210928291186663377083166i$ \\
$j_{87}(1/2+i\sqrt{87}/174)  $\>$ =     -1.392646781702640811764879595$ \\
$j_{87}(4/7+i\sqrt{87}/609)  $\>$ =     -0.2734091384420410395679987544+0.5638210928291186663377083166i$ \\
$j_{87}(2/3+i\sqrt{29}/87)   $\>$ =     -0.8934540633075825239312324056+1.712792560139709259364498753i$ \\
$j_{87}(3/4+i\sqrt{87}/348)  $\>$ =     -0.3036766091486795941175602026+1.435949864109956088410719015i$ \\
$j_{87}(7/9+i\sqrt{29}/261)  $\>$ =     -0.2419441275215273555706181970+1.347810384779931028708183550i$ \\
$j_{87}(5/6+i\sqrt{29}/174)  $\>$ =\ \   0.1353981908291098795018506027+1.060602944919640288072682303i$ \\

$j_{94}(i\sqrt{94}/94)       $\>$ =\ \   2.753536581630347108664494311$ \\
$j_{94}(1/8+i\sqrt{47}/376)  $\>$ =\ \   0.7011860182624305300552726709-0.3777117782814777269670820954i$ \\
$j_{94}(1/6+i\sqrt{47}/282)  $\>$ =\ \   0.6707366869939796995036167910-0.7209727443145135069237190188i$ \\
$j_{94}(1/5+i\sqrt{94}/470)  $\>$ =\ \   0.5000000000000000000000000000-0.7605439663465815028787295055i$ \\
$j_{94}(1/4+i\sqrt{47}/188)  $\>$ =\ \   0.1661596545838042464568125856-0.9387127931245796992422807331i$ \\
$j_{94}(3/8+i\sqrt{47}/376)  $\>$ =     -1.454395369393579551699989786-0.06575939136352121386510913084i$ \\
$j_{94}(5/12+i\sqrt{47}/564) $\>$ =     -1.734691345692469553024170513$ \\
$j_{94}(3/7+i\sqrt{94}/658)  $\>$ =     -1.753536581630347108664494311$ \\
$j_{94}(1/2+i\sqrt{47}/94)   $\>$ =     -2.432682635200800295607254011$ \\
$j_{94}(5/8+i\sqrt{47}/376)  $\>$ =     -1.454395369393579551699989786+0.06575939136352121386510913084i$ \\
$j_{94}(3/4+i\sqrt{47}/188)  $\>$ =\ \   0.1661596545838042464568125856+0.9387127931245796992422807331i$ \\
$j_{94}(4/5+i\sqrt{94}/470)  $\>$ =\ \   0.5000000000000000000000000000+0.7605439663465815028787295055i$ \\
$j_{94}(5/6+i\sqrt{47}/282)  $\>$ =\ \   0.6707366869939796995036167910+0.7209727443145135069237190188i$ \\
$j_{94}(7/8+i\sqrt{47}/376)  $\>$ =\ \   0.7011860182624305300552726709+0.3777117782814777269670820954i$ \\

$j_{95}(i\sqrt{95}/95)       $\>$ =\ \   2.748195845763711583531182345$ \\
$j_{95}(1/10+i\sqrt{19}/190) $\>$ =\ \   1$ \\
$j_{95}(1/8+i\sqrt{95}/760)  $\>$ =\ \   0.6984784404232374822470263773$ \\
$j_{95}(1/5+i\sqrt{19}/95)   $\>$ =\ \   0.5651977173836393964375280133-1.043427435893032154471565986i$ \\
$j_{95}(1/4+i\sqrt{95}/380)  $\>$ =\ \   0.3090169943749474241022934172-0.7228710022800531180923645586i$ \\
$j_{95}(1/3+i\sqrt{95}/285)  $\>$ =     -0.8090169943749474241022934172-0.4467588488907755619527569969i$ \\
$j_{95}(4/9+i\sqrt{95}/855)  $\>$ =     -2.130161857013816735326595511$ \\
$j_{95}(9/20+i\sqrt{19}/380) $\>$ =     -2.130395434767278792875056027$ \\
$j_{95}(1/2+i\sqrt{95}/190)  $\>$ =     -2.316512429173132330451613212$ \\
$j_{95}(2/3+i\sqrt{95}/285)  $\>$ =     -0.8090169943749474241022934172+0.4467588488907755619527569969i$ \\
$j_{95}(3/4+i\sqrt{95}/380)  $\>$ =\ \   0.3090169943749474241022934172+0.7228710022800531180923645586i$ \\
$j_{95}(4/5+i\sqrt{19}/95)   $\>$ =\ \   0.5651977173836393964375280133+1.043427435893032154471565986i$ \\

$j_{105}(i\sqrt{105}/105)     $\>$ =\ \  2.791287847477920003294023597$ \\
$j_{105}(2/21+i\sqrt{5}/105)  $\>$ =\ \  1.618033988749894848204586834$ \\
$j_{105}(1/9+i\sqrt{35}/315)  $\>$ =\ \  1.594313016354848744935660460$ \\
$j_{105}(1/6+i\sqrt{35}/210)  $\>$ =\ \  1$ \\
$j_{105}(5/21+i\sqrt{5}/105)  $\>$ =    -0.6180339887498948482045868344$ \\
$j_{105}(1/3+i\sqrt{35}/105)  $\>$ =    -1.297156508177424372467830230-1.205625150602912946591254240i$ \\
$j_{105}(2/5+i\sqrt{21}/105)  $\>$ =    -1.500000000000000000000000000-0.8660254037844386467637231707i$ \\
$j_{105}(1/2+i\sqrt{105}/210) $\>$ =    -1.791287847477920003294023597$ \\
$j_{105}(3/5+i\sqrt{21}/105)  $\>$ =    -1.500000000000000000000000000+0.8660254037844386467637231707i$ \\
$j_{105}(2/3+i\sqrt{35}/105)  $\>$ =    -1.297156508177424372467830230+1.205625150602912946591254240i$ \\

$j_{110}(i\sqrt{110}/110)     $\>$ =\ \  2.394858673866065943118600576$ \\
$j_{110}(1/11+i\sqrt{10}/110) $\>$ =\ \  1$ \\
$j_{110}(1/8+i\sqrt{55}/440)  $\>$ =\ \  0.6180339887498948482045868344$ \\
$j_{110}(3/20+i\sqrt{11}/220) $\>$ =\ \  0.2955977425220847709809965929$ \\
$j_{110}(1/4+i\sqrt{55}/220)  $\>$ =    -0.5000000000000000000000000000-1.658312395177699924557466368i$ \\
$j_{110}(3/10+i\sqrt{11}/110) $\>$ =    -0.6477988712610423854904982964-1.721433237247136729005106833i$ \\
$j_{110}(1/3+i\sqrt{110}/330) $\>$ =    -0.6974293369330329715593002878-1.689402768409776232971874028i$ \\
$j_{110}(1/2+i\sqrt{55}/110)  $\>$ =    -1.618033988749894848204586834$ \\
$j_{110}(2/3+i\sqrt{110}/330) $\>$ =    -0.6974293369330329715593002878+1.689402768409776232971874028i$ \\
$j_{110}(7/10+i\sqrt{11}/110) $\>$ =    -0.6477988712610423854904982964+1.721433237247136729005106833i$ \\
$j_{110}(3/4+i\sqrt{55}/220)  $\>$ =    -0.5000000000000000000000000000+1.658312395177699924557466368i$ \\

$j_{119}(i\sqrt{119}/119)     $\>$ =\ \  2.187815800253075177796221462$ \\
$j_{119}(1/5+i\sqrt{119}/595) $\>$ =\ \  0.3059015743907980337804293266-1.622693339344373412242882441i$ \\
$j_{119}(3/14+i\sqrt{17}/238) $\>$ =\ \  0.3002425902201204191589098208-1.624810533843826586879604448i$ \\
$j_{119}(1/4+i\sqrt{119}/476) $\>$ =\ \  0.2419079387990809430243016505-1.586661022593502553061512903i$ \\
$j_{119}(1/3+i\sqrt{119}/357) $\>$ =    -0.3998094745173356226785400578-1.006756526422602861301612469i$ \\
$j_{119}(3/8+i\sqrt{119}/952) $\>$ =    -0.4246516023520581806572584412-0.2391047313257304592960825073i$ \\
$j_{119}(3/7+i\sqrt{17}/119)  $\>$ =    -1.300242590220120419158909821-0.3751894661561734131203955526i$ \\
$j_{119}(1/2+i\sqrt{119}/238) $\>$ =    -1.634512672894045524734086419$ \\
$j_{119}(4/7+i\sqrt{17}/119)  $\>$ =    -1.300242590220120419158909821+0.3751894661561734131203955526i$ \\
$j_{119}(5/8+i\sqrt{119}/952) $\>$ =    -0.4246516023520581806572584412+0.2391047313257304592960825073i$ \\
$j_{119}(2/3+i\sqrt{119}/357) $\>$ =    -0.3998094745173356226785400578+1.006756526422602861301612469i$ \\
$j_{119}(3/4+i\sqrt{119}/476) $\>$ =\ \  0.2419079387990809430243016505+1.586661022593502553061512903i$ \\
$j_{119}(11/14+i\sqrt{17}/238)$\>$ =\ \  0.3002425902201204191589098208+1.624810533843826586879604448i$ \\
$j_{119}(4/5+i\sqrt{119}/595) $\>$ =\ \  0.3059015743907980337804293266+1.622693339344373412242882441i$ \\
\end{tabbing}

\medskip

\end{document}